   \newcommand\dom{\operatorname{dom}}
\DeclareMathOperator{\rad}{rad}
\DeclareMathOperator{\odd}{odd}
\DeclareMathOperator{\sech}{sech}
\DeclareMathOperator{\Span}{span}
 \DeclareMathOperator{\sign}{sign} 
  \DeclareMathOperator{\ran}{Ran}
   \DeclareMathOperator{\eq}{eq} 
    \DeclareMathOperator{\logg}{Log}
      \DeclareMathOperator{\EE}{\mathcal{E}}
 \DeclareMathOperator{\opl}{\emph{\textbf{l}}}
  \DeclareMathOperator{\opt}{\emph{\textbf{t}}} 
 \newcommand{\bb}[1]{\mathbf{#1}}
 \renewcommand{\log}{\textrm{Log}} 
\newtheorem{theorem}{Theorem}[section]
 \newtheorem{lemma}[theorem]{Lemma}
 \newtheorem{proposition}[theorem]{Proposition}
  \newtheorem{definition}[theorem]{Definition}
  \newtheorem{remark}[theorem]{Remark}
 \numberwithin{equation}{section}
\date{}
\begin{document}

\maketitle
 
\centerline{Department of Mathematics,
IME-USP}
 \centerline{Rua do Mat\~ao 1010, Cidade Universit\'aria, CEP 05508-090,
 S\~ao Paulo, SP, Brazil.}
 \centerline{\it angulo@ime.usp.br, nataliia@ime.usp.br}

\numberwithin{equation}{section}

\begin{abstract} 
The aim of this work is to demonstrate the effectiveness of the
extension
theory of symmetric operators in the investigation of the
stability of
standing waves for the nonlinear Schr\"odinger equations with two
types of
nonlinearities (power and logarithmic) and two types of point
interactions
($\delta$- and $\delta'$-) on  a star graph.  
Our approach allows us to overcome the use of variational
techniques in the
investigation of the Morse index for self-adjoint operators with
non-standard
boundary conditions which appear in the stability study. We also
demonstrate
how our method simplifies the proof of the stability results
known for the
NLS equation with point interactions on the line.

\end{abstract}
\textbf{AMS Subject Classifications:} 35Q55, 81Q35, 37K40, 37K45, 47E05.
\section{Introduction}

In the last two decades the study of nonlinear dispersive models
with point
interactions has attracted a lot of attention of mathematicians
and
physicists. In particular, such models appear in nonlinear
optics,
Bose-Einstein condensates (BEC), and quantum graphs (or networks) (see \cite{AdaNoj13a, BK, BeK,
BurCas01,  CM, K, CozFuk08, Mug15, Noj14} and references therein). The prototype
equation for
description of these models is the nonlinear Schr\"odinger
equation (NLS
henceforth)
\begin{equation}\label{NLS0}
i\partial_{t}u(t,x)+ \partial_{x}^2u(t,x) + \left\vert
u(t,x)\right\vert ^{p
-1}u(t,x)=0,\quad x\neq0,
\end{equation}
$(t,x)\in \mathbb{R}\times \mathbb{R}$, $p>1$, with specific
boundary
conditions at $x=0$ induced by a certain impurity or point
interaction. The
most studied are the models with so-called $\delta$- and
$\delta'$-interaction
(see Section 5 for details). Indeed, the Dirac distribution models an impurity or defect
localized at
the origin. Moreover, the NLS-$\delta$ equation on the line can be
viewed as a
prototype model for the interaction of a wide soliton with a
highly localized
potential. In nonlinear optics it models a soliton propagating in
a medium
with a point defect, or interaction of a wide soliton with a much
narrower
one in a bimodal fiber (see \cite{HMZ1}).
Recently
numerous results on the local well-posedness of initial value
problem and
periodic boundary value problem, the
long time behavior of solutions, the existence of stationary
states, blow up
and scattering results (see \cite{AdaNoj13a, AdaNoj13, Ang, A3,
AP, CM, CMR,
CozFuk08, ghw, HMZ1} and references therein) have been obtained.

In this paper we study the existence and the orbital  
stability of  standing waves  
of  model \eqref{NLS0} being extended to  a star graph 
$\mathcal G$, i.e. $N$ 
half-lines  attached to the common vertex $\nu=0$. Namely,  we consider the following nonlinear Schr\"odinger
equations on the
star graph $\mathcal{G}$
 \begin{equation}\label{NLS_graph_ger}
i\partial_t \mathbf{U}(t,x)+\partial_x^2\mathbf{U}(t,x)
+|\mathbf{U}(t,x)|
^{p-1}\mathbf{U}(t,x)=0,
\end{equation}
where $\mathbf{U}(t,x)=(u_j(t,x))_{j=1}^N:\mathbb{R}\times
 \mathbb{R}_+\rightarrow 
\mathbb{C}^N$, and  $p>1$.
 The nonlinearity acts componentwise, i.e.
  $(|\mathbf{U}|^{p-1}\mathbf{U})_j=|u_j|
^{p-1}u_j$, and  
 the function $\bb U$ is assumed to satisfy 
 specific boundary $\delta$- and $
\delta'$-interaction at the vertex $\nu=0$ to
 be defined below (see Subsection 
\ref{sub_2.1}).

 Equation \eqref{NLS_graph_ger} models 
 propagation through junctions in networks. 
The analysis of the  behavior of NLS  equation on networks is not yet 
 fully developed, but it is currently growing 
(see \cite{AdaNoj15, AdaNoj14, AngGol17a, AngGol18, Ard16,
BanIgn14, Noj14}
 and references therein). In particular, models of 
 BEC on graphs/networks is a topic of active research 
 (see \cite{BurCas01, Fid15}).

We recall that the quantum graphs 
(metric graphs equipped with  a linear  Hamiltonian $\bb H$) 
have been a very developed subject in the last couple of decades. They give  simplified models in mathematics, physics, 
chemistry, and engineering, when one considers propagation of 
waves of various types through a quasi one-dimensional 
(e.g. meso- or nanoscale) system that looks like a thin 
neighborhood of a  graph 
(see \cite{BK, BlaExn08, BurCas01, K, Mug15} for details and
references).

Various recent analytical works   
(see \cite{AdaNoj14, AdaNoj15, AngGol17a, AngGol18, Noj14}  
and references therein) deal with special solutions of  
\eqref{NLS_graph_ger}  called 
{\it standing wave solutions}, i.e. the  solutions of  
 the form
$
\mathbf{U}(t,x)= e^{i\omega t} \mathbf{\Phi}(x),
$
with the profile $\mathbf{\Phi}$ satisfying  
$\delta$-interaction conditions defined by 
\eqref{D_alpha} below. In \cite{AdaNoj14} it was established  a complete 
description of the profiles $\mathbf{\Phi}$ for any 
$\alpha\in\mathbb{R}$,   and  the stability 
investigation for  the $N$-tail profile 
(see \eqref{Phi_vect}) under the restriction  
$\alpha<\alpha^*<0$ which comes from the associated 
variational problem. In \cite{AdaNoj15} the restriction 
$\alpha<\alpha^*$ was removed. 
It is worth noting that  the problems of the 
existence and the stability/instability of 
standing waves are far richer and more complicated 
in the case of the  NLS models with point interactions 
on star graphs  than in the case of the NLS equation   
with point interactions on the line. 
 We propose a novel 
short proof of  the  orbital stability of the
$N$-tail profile for any $\alpha<0$ in the 
framework of the  extension theory approach 
(see Remark \ref{alpha_neg}). Moreover, 
we prove the following new result on the orbital 
instability of $N$-bump profile  $\bb \Phi$ in the case 
$\alpha>0$.

 \begin{theorem}\label{main_delta}
Let $\alpha>0$, $1<p<5$, and $\omega>\tfrac{\alpha^2}{N^2}$. Let
also $
\mathbf{\Phi}_{\alpha,\delta}$ be defined by \eqref{Phi_vect},
and the space $
\mathcal{E}(\mathcal{G})$ be defined in notation section.
 Then the following assertions hold.
\begin{itemize}
    
\item[$(i)$] If $1<p\leq 3$, then $e^{i\omega
t}\mathbf{\Phi}_{\alpha,\delta}$
is orbitally unstable in $\mathcal{E}(\mathcal{G})$.
  
\item[$(ii)$] If $3<p<5$, then there exists $\omega_2>
\tfrac{\alpha^2}{N^2}$
such that $e^{i\omega t}\mathbf{\Phi}_{\alpha,\delta}$ is
orbitally unstable in $
\mathcal{E}(\mathcal{G})$ for $\omega>\omega_2$.
\end{itemize}
\end{theorem}

In the case $p\geq 5$ our method does not provide any information
about orbital
stability of $e^{i\omega t}\mathbf{\Phi}_{\alpha,\delta}$ (see
Remark
\ref{alpha_neg}-$(i)$).
Mention also that in the case $N=2$ the above result coincides
with
\cite[Theorem 4]{CozFuk08}.

In Subsection \ref{NLS_delta'_graph}, we prove the following novel
stability
theorem for the standing waves of  the NLS-$\delta'$ equation 
on the star graph with a specific $N$-tail profile $\bb
\Phi_{\lambda,\delta'}$
satisfying  
$\delta'$-interaction conditions \eqref{H_lambda}.

 \begin{theorem}\label{Main}
Let $\lambda<0$, and $\omega >\tfrac{N^2}{\lambda^2}$. Let also
$\mathbf{\Phi}
_{\lambda,\delta'}$ be defined by \eqref{varphi_lam}, and the
space $H^1_{\eq}
(\mathcal{G})$ be defined by
$$H^1_{\eq}(\mathcal{G})=\{(v_j)_{j=1}^N\in
H^1(\mathcal{G}):
v_1(x)=...=v_N(x),\,x\in\mathbb{R}_+\}.$$ Then the following assertions hold.
\begin{itemize}
  \item[$(i)$] Let  $1<p\leq 5$.
  
$1)$ If $\omega < \tfrac{N^2}{\lambda^2}\tfrac{p+1}{p-1}$, then
$e^{it\omega}
\mathbf{\Phi}_{\lambda,\delta'}$ is orbitally stable in
$H^1(\mathcal{G})$.
  
$2)$ If $\omega > \tfrac{N^2}{\lambda^2}\tfrac{p+1}{p-1}$ and $N$
is even, then
$e^{it\omega}\mathbf{\Phi}_{\lambda,\delta'}$ is orbitally
unstable in
$H^1(\mathcal{G})$.
  
\item[$(ii)$] Let $p>5$ and $\omega\neq
\tfrac{N^2}{\lambda^2}\tfrac{p+1}
{p-1}$. Then there exists $\omega^*>\tfrac{N^2}{\lambda^2}$ such
that
$e^{it\omega}\mathbf{\Phi}_{\lambda,\delta'}$ is orbitally
unstable in
$H^1(\mathcal{G})$ for $\omega>\omega^*$, and
$e^{it\omega}\mathbf{\Phi}
_{\lambda,\delta'}$ is orbitally stable in
$H^1_{\eq}(\mathcal{G})$ for $
\omega<\omega^*$.
    \end{itemize}
\end{theorem}
The relative position of $\omega^*$ and
$\tfrac{N^2}{\lambda^2}\tfrac{p+1}{p-1}$
is discussed in Remark \ref{rel_posit}.
In the case $N=2$ the above result coincides with Proposition
6.9(1) (partially)
and Theorem 6.11 in \cite{AdaNoj13a}.
To our knowledge, the NLS-$\delta'$ equation on the
star graph has
never been studied before, and complete description of the
standing waves for such model is unknown (see Remark \ref{gensolu}).

In Section \ref{log}, we study the following NLS equation with
logarithmic
nonlinearity on the star graph $\mathcal{G}$ (NLS-log equation)
 \begin{equation}\label{NLS_log_graph_ger}
i\partial_t \mathbf{U}(t,x)+\partial_x^2\mathbf{U}(t,x) +\bb
U(t,x)\logg|
\mathbf{U}(t,x)|^2=0,
\end{equation} 
where $\mathbf{U}(t,x)=(u_j(t,x))_{j=1}^N:\mathbb{R}\times
\mathbb{R}_+\rightarrow
\mathbb{C}^N$. The nonlinearity acts componentwise, i.e.
$(\mathbf{U}\logg|
\mathbf{U}|^2)_j=u_j\logg|u_j|^2.$ Note that by
$\logg|\mathbf{U}(t,x)|^2$ we mean
$\logg(|\mathbf{U}(t,x)|^2).$

For the NLS-log equation with $\delta$-interaction, we extend the
result from
\cite{Ard16}  (for any $
\alpha<0$) on the orbital stability of the Gaussian $N$-tail
profile $\bb
\Psi_{\alpha,\delta}=(\psi_{\alpha,\delta})_{j=1}^N$ defined by 
\eqref{Phi_vect_log} below.
 In particular, we prove
\begin{theorem}\label{stabil_log_delta}
Let $\omega\in \mathbb{R}$, and $\mathbf{\Psi}_{\alpha,\delta}$
be defined by
\eqref{Phi_vect_log}. Then the standing wave $e^{i\omega
t}\mathbf{\Psi}_{\alpha,
\delta}$ is orbitally stable in 
$W^1_{\mathcal{E}}(\mathcal{G})$ for any $\alpha<0$, and
$e^{i\omega t}
\mathbf{\Psi}_{\alpha,\delta}$ is spectrally unstable for any
$\alpha>0$.
\end{theorem} 
We also show the result analogous to Theorem \ref{Main} for
the NLS-log equation
with  $\delta'$-interaction on $\mathcal{G}$.
\begin{theorem}\label{Main_log}
Let $\lambda<0$, and $\omega \in \mathbb{R}$. Let also
$\mathbf{\Psi}_{\lambda,
\delta'}$ be defined by \eqref{prof_log'}. Then the following
assertions hold.
\begin{itemize}
\item[$(i)$] If $-N<\lambda<0$, then
$e^{it\omega}\mathbf{\Psi}_{\lambda,
\delta'}$  is orbitally stable in $W^1(\mathcal{G})$.
  
\item[$(ii)$] If $\lambda<-N$, then
$e^{it\omega}\mathbf{\Psi}_{\lambda,
\delta'}$ is spectrally  unstable. 
\end{itemize}
\end{theorem} 
\noindent The spaces  $W^1_{\mathcal{E}}(\mathcal{G})$ and  $W^1(\mathcal{G})$ are defined in notation
section.

In Section \ref{line}, we propose a new approach to prove some known
results on the
orbital stability of standing waves for NLS equation \eqref{NLS0}
with
$\delta$- and $\delta'$-interaction on the line.  
It should be noted that the most of previous results (for NLS on
$\mathcal{G}$ and
on the line) are based on either variational methods or the
abstract
stability theory by Grillakis, Shatah and Strauss
\cite{GrilSha87, GrilSha90}
which requires spectral analysis of certain self-adjoint
Schr\"odinger operators.
In particular, investigation of the spectrum of these operators
is based on the
analytic perturbations theory and the variational methods.

Our approach relies on the theory of extensions of symmetric
operators, the
spectral theory of self-adjoint Schr\"odinger operators and the
analytic
perturbations theory. In particular, the extension theory gives
the advantage
to estimate the number of negative eigenvalues (Morse index) of
the linear
Schr\"odinger operator associated with the NLS equation.
 We emphasize that we do not need to study any
variational problem
associated with the equation, and our method does not use any
minimization
properties of the standing waves studied. We would like to
mention the papers
\cite{KaiPel17a, KaiPel17b} where the non-variational methods
were used for the
investigation of the Morse index in the case of the NLS equation
on the star
graph with classical and generalized Kirchhoff conditions at the
vertex. In
particular, the authors elaborated a kind of extension of the
Sturm theory for the
Schr\"odinger operators on the star graph.

The paper is organized as follows. In the Preliminaries (Section
2), we give some
brief description of all the point interactions on the star graph
and explain the
origin of $\delta$- and $\delta'$-interaction. We also review
previous results on
the orbital stability. In Section 3 we discuss NLS equation
\eqref{NLS_graph_ger}
with $\delta$- and $\delta'$-interaction on the star graph
$\mathcal{G}$. In
Section 4, we study NLS-log equation \eqref{NLS_log_graph_ger}
with $\delta$- and
$\delta'$-interaction on $\mathcal{G}$. In Section 5, we briefly
discuss how the
tools of the extension theory can be applied to the stability study
of the NLS
equations with point interactions on the line.

\smallskip

\noindent \textbf{Notation.} 
By $H^1(\mathbb{R})$, $H^2(\mathbb{R}\setminus
\{0\})=H^2(\mathbb{R}_-)\oplus
H^2(\mathbb{R}_+)$ we denote the Sobolev spaces. Denote by
$\mathcal{G}$ the
star graph constituted by $N$ half-lines attached to a common
vertex $\nu=0$. On
the graph we define the spaces 
  \begin{equation*}
  L^p(\mathcal{G})=\bigoplus\limits_{j=1}^NL^p(\mathbb{R}_+),\,\, 
H^1(\mathcal{G})=\bigoplus\limits_{j=1}^NH^1(\mathbb{R}_+),\,\, H^2(\mathcal{G})=\bigoplus\limits_{j=1}^NH^2(\mathbb{R}_+),
\end{equation*}   
$p>1$. For instance, the norm of  $\mathbf{V}=(v_j)_{j=1}^N$ in $L^p(\mathcal{G})$ is defined by $||\bb V||
^p_{L^p(\mathcal{G})}=\sum\limits_{j=1}^N||v_j||^p_{L^p(\mathbb{R}_+)}$.   Depending on the context we will use the following notations for
different
objects: by $||\cdot||$ we denote  the norm in $L^2(\mathbb{R})$ or in $L^2(\mathcal{G})$ (accordingly  
$(\cdot,\cdot)$ denotes the scalar product in $L^2(\mathbb{R})$
or in
$L^2(\mathcal{G})$);  by $||\cdot||_p$ we denote the norm in $L^p(\mathbb{R})$ or
in
$L^p(\mathcal{G})$. 

\noindent 
Denote 
$ 
\mathcal{E}(\mathcal{G})=\{(v_j)_{j=1}^N\in H^1(\mathcal{G}):
v_1(0)=...=v_N(0)\},
$  
and
$$L^2_{k}(\mathcal{G})=\{(v_j)_{j=1}^N\in L^2(\mathcal{G}):
v_1(x)=...=v_{k}
(x),\, \,v_{k+1}(x)=...=v_N(x)\}.$$
In particular,
$\mathcal{E}_k(\mathcal{G})=\mathcal{E}(\mathcal{G})\cap L^2_{k}
(\mathcal{G}),$ and $H^1_k(\mathcal{G})=H^1(\mathcal{G})\cap
L^2_{k}(\mathcal{G}).$
  On $\mathcal{G}$ we define the
following  weighted Hilbert spaces 
\begin{align*} 
W^j(\mathcal{G})=\bigoplus\limits_{j=1}^NW^j(\mathbb{R}_+),\quad
W^j(\mathbb{R}
_+)=\{f\in H^j(\mathbb{R}_+): x^jf\in L^2(\mathbb{R}_+)\}, 
 \end{align*}
$W^j_k(\mathcal{G})= W^j(\mathcal{G})\cap
L^2_k(\mathcal{G})$, $j\in\{1,2\}$, and the Banach space  
 $$ 
 W(\mathcal{G})=\bigoplus_{j=1}^N W(\mathbb{R}_+),\;\;W(\mathbb{R}_+)=\{f\in H^1(\mathbb{R_+}):  |f|^2
\logg|f|^2\in L^1(\mathbb
R_+)\}.
$$  
In particular, $W_{\mathcal{E}}(\mathcal{G})=W(\mathcal{G})\cap
\mathcal{E}
(\mathcal{G})$,
$W^1_{\mathcal{E}}(\mathcal{G})=W^1(\mathcal{G})\cap \mathcal{E}
(\mathcal{G})$, and
$W^1_{\mathcal{E},k}(\mathcal{G})=W^1_{\mathcal{E}}
(\mathcal{G})\cap L^2_k(\mathcal{G})$.

Let $A$ be a densely defined symmetric operator in the Hilbert
space $
\mathcal{H}$. The domain of $A$ is denoted by $\dom(A)$. The
\textit{ deficiency
subspaces} and \textit{ deficiency numbers} of $A$ are defined by $\mathcal{N}_\pm(A):=\ker(A^*\mp
iI)$ and 
$n_\pm(A):=\dim\ker(A^*\mp
iI)$ respectively. The number of negative eigenvalues counting multiplicities
(or \textit{the
Morse index}) is denoted by $n(A)$. The spectrum and the
resolvent set of $A$
are denoted by $\sigma(A)$ and $\rho(A)$ respectively. In
particular, $
\sigma_p(A)$ and $\sigma_c(A)$ denote the point and the
continuous spectrum of
$A$. Let $X$ be an arbitrary Banach space, then its dual is denoted by
$X'$.

\section{Preliminaries}
In this Section we provide a brief description of point
interactions on the star
graph and also discuss previous results on the orbital stability.\subsection{The  NLS equation with point interactions on a star
graph.}\label{sub_2.1}

The family of self-adjoint conditions naturally arising at the
vertex $\nu=0$ of
the star graph $\mathcal{G}$  has the following description
\begin{equation}\label{s-a_cond}
(U-I)\mathbf{U}(t,0)+i(U+I)\mathbf{U'}(t,0)=0,
\end{equation} 
where $\mathbf{U}(t,0)=(u_j(t,0))_{j=1}^N$,
$\mathbf{U'}(t,0)=(u'_j(t,0))_{j=1}
^N$, $U$ is an arbitrary unitary $N\times N$ matrix, and $I$ is
the $N\times N$
identity matrix. 
The conditions \eqref{s-a_cond} at $\nu=0$ define the
$N^2$-parametric family of
self-adjoint extensions of the closable symmetric operator (see
\cite[Chapter
17]{BlaExn08})
$$\mathbf{H}_0=\bigoplus\limits_{j=1}^N\frac{-d^2}{dx^2},\quad\dom(\mathbf{H}
_0)=\bigoplus\limits_{j=1}^N C_0^\infty(\mathbb{R}_+).$$

We consider two choices of matrix $U$ which correspond to
so-called $\delta$- and
$\delta'$- interactions on the star graph $\mathcal{G}$. More
precisely, the
matrix 
$ 
U=\frac{2}{N+i\alpha}\mathcal{I}-I,$ $
\alpha\in \mathbb{R}\setminus\{0\},
$ 
 where $\mathcal{I}$ is the $N\times N$ matrix 
 whose all entries 
equal  one, induces the following nonlinear 
Schr\"odinger equation with $\delta$-interaction 
on the star graph $\mathcal{G}$ (NLS-$\delta$ equation)
\begin{equation}
\label{NLS_graph}
i\partial_t \mathbf{U}-\mathbf{H}_\alpha^\delta\mathbf{U} 
+|\mathbf{U}|^{p-1}\mathbf{U}=0.
\end{equation}
Here $\mathbf{H}_\alpha^\delta$ is the self-adjoint operator on $L^2(\mathcal{G})$ acting as  $(\mathbf{H}_\alpha^\delta \mathbf{V})(x)
=(-v_j''(x))_{j=1}^N,\,\, x> 0$, on the domain  
$\dom(\mathbf{H}_\alpha^\delta)=\bb D_{\alpha,\delta}$, where  
\begin{equation}\label{D_alpha} 
\bb D_{\alpha,\delta}:= \Big \{\mathbf{V}\in 
H^2(\mathcal{G}): 
v_1(0)=...=v_N(0),\,\,\sum\limits_{j=1}^N v_j'(0)=\alpha
v_1(0)\Big \}.
\end{equation}
Model \eqref{NLS_graph}-\eqref{D_alpha} has been extensively
studied in
\cite{AdaNoj15, AdaNoj14}. In particular, the authors showed
well-posedness of the
corresponding Cauchy problem. Moreover, they investigated the
existence and the
particular form of standing waves, as well as their variational
and stability
properties (see Theorems \ref{1bump} and \ref{st_graph} below).

The second model we are interested in corresponds to  
$ 
U=I-\frac{2}{N-i\lambda}\mathcal{I},$ $ \lambda\in
\mathbb{R}\setminus\{0\},
$ 
which induces the nonlinear Schr\"odinger equation with
$\delta'$-interaction on
the graph $\mathcal{G}$ (NLS-$\delta'$ equation)
  \begin{equation}\label{NLS_graph'}
i\partial_t \mathbf{U}-\mathbf{H}_\lambda^{\delta'} \mathbf{U}
+|\mathbf{U}|
^{p-1}\mathbf{U}=0,
\end{equation}
where $\mathbf{H}_\lambda^{\delta'}$ is the self-adjoint operator
on
$L^2(\mathcal{G})$ acting as $(\mathbf{H}_\lambda^{\delta'} \mathbf{V})(x)
=(-v_j''(x))_{j=1}^N,\,\, x>0,$ on the domain $\dom(\mathbf{H}_\lambda^{\delta'})=\bb D_{\lambda,\delta'}$, 
\begin{equation}
\label{H_lambda} 
\bb D_{\lambda,\delta'}:=
\Big \{\mathbf{V}\in H^2(\mathcal{G}): 
v'_1(0)=...=v'_N(0),\,\,\sum\limits_{j=1}^N 
 v_j(0)=\lambda v'_1(0)\Big \}.
\end{equation}
To our knowledge such type of interaction has never been studied
for the NLS  equation on the
star graph. In this connection one of the principal aims of this
paper is to
establish some results on the existence and the orbital
stability of standing
wave solutions to \eqref{NLS_graph'}.

In Section 4 we consider the following NLS equations with
logarithmic
nonlinearity on the star graph (NLS-log-$\delta$ and
NLS-log-$\delta'$ equation):
 \begin{equation}\label{NLS_graph_log1}
i\partial_t \mathbf{U}-\mathbf{H}_\alpha^\delta\mathbf{U}
+\mathbf{U}\logg|
\mathbf{U}|^{2}=0,
\end{equation}
 \begin{equation}\label{NLS_graph_log2}
i\partial_t \mathbf{U}-\mathbf{H}_\lambda^{\delta'}\mathbf{U}
+\mathbf{U}\logg|
\mathbf{U}|^{2}=0.
\end{equation}
Model \eqref{NLS_graph_log1} has been studied in \cite{Ard16}. In
particular, the
author showed well-posedness of the Cauchy problem in the Banach
space
$W_{\mathcal{E}}(\mathcal{G})$ (see Theorem
\ref{well_log_graph}), and studied
stability properties of the ground state for the corresponding
stationary
equation.

\subsection{Review of the results on the orbital stability for
the NLS equation
with point interactions on a star graph.} 
Crucial role in the orbital stability analysis of standing waves
is played by the
symmetries of NLS equation \eqref{NLS_graph_ger} (and 
\eqref{NLS_log_graph_ger}) The basic symmetry associated to the
mentioned
equation is phase invariance, namely, if $ \mathbf{U}$ is a
solution of
\eqref{NLS_graph_ger} then $ e^{i\theta}\mathbf{U}$ is also a
solution for any $
\theta\in [0,2\pi)$. Thus, it is reasonable to define orbital
stability as
follows (for  models \eqref{NLS_graph_ger} and
\eqref{NLS_log_graph_ger}).

\begin{definition}
The standing wave $\mathbf{U}(t, x) = e^{i\omega
t}\mathbf{\Phi}(x)$ is said to
be \textit{orbitally stable} in a Banach space $X$ if for any
$\varepsilon > 0$
there exists $\eta > 0$ with the following property: if
$\mathbf{U}_0 \in X$
satisfies $||\mathbf{U}_0-\mathbf{\Phi}||_{X} <\eta$,
then the solution $\mathbf{U}(t)$ of (\ref{NLS_graph_ger}) (resp.
(\ref{NLS_log_graph_ger})) with $\mathbf{U}(0) = \mathbf{U}_0$
exists for any
$t\in\mathbb{R}$ and
\[\sup\limits_{t\in \mathbb{R}
}\inf\limits_{\theta\in\mathbb{R}}||\mathbf{U}(t)-
e^{i\theta}\mathbf{\Phi}||_{X} < \varepsilon.\]
Otherwise, the standing wave $\mathbf{U}(t, x) = e^{i\omega
t}\mathbf{\Phi}(x)$ is
said to be \textit{orbitally unstable} in $X$.
\end{definition}

In particular, for the NLS-$\delta$ and NLS-$\delta'$ equations on
the star graph $
\mathcal G$ defined by \eqref{NLS_graph} and \eqref{NLS_graph'},
the space $X$
coincides with $\mathcal{E}(\mathcal{G})$ and $H^1(\mathcal{G})$,
respectively.

In the first part of the paper we study the orbital stability of
the standing
wave solutions 
$ 
\mathbf{U}(t,x)=e^{i\omega t}\mathbf{\Phi}(x)=\left(e^{i\omega
t}\varphi_j(x)
\right)_{j=1}^N
$  
for  NLS-$\delta$ equation \eqref{NLS_graph} on $\mathcal{G}$.
It is easily
seen that amplitude $\mathbf{\Phi}\in \bb D_{\alpha,\delta}$
satisfies the
following  stationary equation 
\begin{equation}\label{H_alpha}
\mathbf{H}_\alpha^\delta\mathbf{\Phi}+\omega\mathbf{\Phi}-|\mathbf{\Phi}|
^{p-1}\mathbf{\Phi}=0.
\end{equation}
In \cite{AdaNoj14} the authors obtained the following description
of all
solutions   to equation \eqref{H_alpha}. 
\begin{theorem}\label{1bump}
Let $[s]$ denote the integer part of $s\in\mathbb{R}$, and
$\alpha\neq 0$. Then
equation \eqref{H_alpha} has $\left[\tfrac{N-1}{2}\right]+1$ (up
to permutations
of the edges of $\mathcal{G}$) vector solutions $\mathbf{\Phi}
_m^\alpha=(\varphi^\alpha_{m,j})_{j=1}^N,
\,\,m=0,...,\left[\tfrac{N-1}{2}\right]
$, which are given by 
\begin{equation*} 
 \varphi_{m,j}^\alpha(x)   = \left\{
                    \begin{array}{ll}
                      \Big[\frac{(p+1)\omega}{2} 
\sech^2\Big(\frac{(p-1)\sqrt{\omega}}{2}x-a_m\Big)\Big]^{\frac{1}{p-1}},
&
\quad\hbox{$j=1,...,m$;} \\
                     \Big[\frac{(p+1)\omega}{2} 
\sech^2\Big(\frac{(p-1)\sqrt{\omega}}{2}x+a_m\Big)\Big]^{\frac{1}{p-1}},
&
\quad\hbox{$j=m+1,...,N,$}
                    \end{array}
                  \right.
\end{equation*} 
 where 
 $
 a_m =\tanh^{-1} (\frac{\alpha}{(2m-N)\sqrt{\omega}} ),\,\,\text{and}\,\,\,\,\omega>\tfrac{\alpha^2}{(N-2m)^2}. 
$
\end{theorem}

\begin{remark}\label{tails_bumps}
\rm
 $(i)$ 
Note that in the case $\alpha<0$ vector  
$\mathbf{\Phi}_m^\alpha=(\varphi^\alpha_{m,j})_{j=1}^N$ 
has $m$ \textit{bumps} and $N-m$ \textit{tails}. 
It is easily seen that  
 $\mathbf{\Phi}^{\alpha}_0$ is the \textit{N-tail profile}. 
 Moreover, the $N$-tail profile is the only symmetric 
 (i.e. invariant under permutations of the edges) solution 
 of equation \eqref{H_alpha}.  
 In the case $N=5$ we have three 
 types of profiles:  \textit{5-tail profile}, 
  \textit{4-tail/1-bump profile} and 
   \textit{3-tail/2-bump profile}. 
They are demonstrated on Figure 1 (from the left to the right).

$(ii)$
  In the case $\alpha>0$ vector  
  $\mathbf{\Phi}_m^\alpha=(\varphi^\alpha_{m,j})_{j=1}^N$ 
  has $m$ \textit{tails} and $N-m$ \textit{bumps} respectively. 
  For $N=5$ we have: \textit{5-bump profile, 
  4-bump/1-tail profile, 3-bump/ 2-tail profile}. 
They are demonstrated on Figure 2.  \end{remark}
 
 \begin{tikzpicture}[scale=0.6]
	\clip (-3,-3) rectangle (17,2);
\draw[-,color=gray] (2,1).. controls +(-0.7,-0.7) ..  (0.3,0.2);
\draw[-,color=gray] (2,1).. controls +(-0.3,-0.9)  ..  (0.5,-1);
\draw[-,color=gray] (2,1).. controls +(0.2,-1)  ..  (2.8,-1.2);
\draw[-,color=gray] (2,1) .. controls +(0.2,-0.5) .. (4.2,-0.2);
\draw[-,color=gray] (2,1).. controls +(0.6,-0.3)  ..  (3.5,1);		
\draw[-latex,thin](2,0)--++(-2,0);
       \draw[-latex, thin](2,0)--++(-1.8,-1.3);
        \draw[-latex, thin](2,0)--++(1,-1.7);
		\draw[-latex, thin](2,0)--++(2.5,-0.5);
        \draw[-latex, thin](2,0)--++(2,1);
		\node[label={[xshift=2cm, yshift=-2.5cm] }]{};

		\begin{scope}[shift={(5.5,0)}]
\draw[-,color=gray] (2,1).. controls +(-0.5,0.5) and +(1.5,-0.1) .. (0.3,0.2);
\draw[-,color=gray] (2,1).. controls +(-0.3,-0.9)  ..  (0.5,-1);
\draw[-,color=gray] (2,1).. controls +(0.2,-1)  ..  (2.8,-1.2);
\draw[-,color=gray] (2,1) .. controls +(0.2,-0.5) .. (4.2,-0.2);
\draw[-,color=gray] (2,1).. controls +(0.6,-0.3)  ..  (3.5,1);		
\draw[-latex,thin](2,0)--++(-2,0);
       \draw[-latex, thin](2,0)--++(-1.8,-1.3);
        \draw[-latex, thin](2,0)--++(1,-1.7);
		\draw[-latex, thin](2,0)--++(2.5,-0.5);
       \draw[-latex, thin](2,0)--++(2,1);
		\node[label={[xshift=1cm, yshift=-1.8cm] Figure 1}]{};	
		\end{scope}

		\begin{scope}[shift={(11,0)}]
\draw[-,color=gray] (2,1).. controls +(-0.5,0.5) and +(1.5,-0.1) .. (0.3,0.2);
\draw[-,color=gray] (2,1).. controls +(-0.3,-0.9)  ..  (0.5,-1);
\draw[-,color=gray] (2,1).. controls +(0.2,-1)  ..  (2.8,-1.2);
\draw[-,color=gray] (2,1) .. controls +(0.2,-0.5) .. (4.2,-0.2);
  \draw[-,color=gray] (2,1).. controls +(0.3,0.5) and +(-0.8,-0.5)  ..  (3.5,1);
		\draw[-latex,thin](2,0)--++(-2,0);
        \draw[-latex, thin](2,0)--++(-1.8,-1.3);
        \draw[-latex, thin](2,0)--++(1,-1.7);
		\draw[-latex, thin](2,0)--++(2.5,-0.5);
        \draw[-latex, thin](2,0)--++(2,1);
	\end{scope}
\end{tikzpicture}

\begin{tikzpicture}[scale=0.6]
	\clip (-3,-3) rectangle (17,2);
\draw[-,color=gray] (2,1).. controls +(-0.5,0.5) and +(1.5,-0.1)
..
(0.3,0.2);
\draw[-,color=gray] (2,1).. controls +(-0.3,0.4) and +(1, 0.5) ..(0.5,-1);  
\draw[-,color=gray] (2,1).. controls +(0.2,0.3) and +(-0.1, -0.1)
..
(2.8,-1.2);
\draw[-,color=gray] (2,1) .. controls +(0.2,0.5) and +(-1.1, 0.3)
..
(4.2,-0.2);
\draw[-,color=gray] (2,1).. controls +(0.3,0.5) and +(-0.8,-0.5)
..
(3.5,1);
		\draw[-latex,thin](2,0)--++(-2,0);
        \draw[-latex, thin](2,0)--++(-1.8,-1.3);
        \draw[-latex, thin](2,0)--++(1,-1.7);
		\draw[-latex, thin](2,0)--++(2.5,-0.5);
      \draw[-latex, thin](2,0)--++(2,1);
		\node[label={[xshift=2cm, yshift=-2.5cm]  }]{};

		\begin{scope}[shift={(5.5,0)}]
\draw[-,color=gray] (2,1).. controls +(-0.7,-0.7) ..  (0.3,0.2);\draw[-,color=gray] (2,1).. controls +(-0.3,0.4) and +(1, 0.5) ..
(0.5,-1);
\draw[-,color=gray] (2,1).. controls +(0.2,0.3) and +(-0.1, -0.1)
..
(2.8,-1.2);
\draw[-,color=gray] (2,1) .. controls +(0.2,0.5) and +(-1.1, 0.3)
..
(4.2,-0.2);
\draw[-,color=gray] (2,1).. controls +(0.3,0.5) and +(-0.8,-0.5)
..
(3.5,1);
		\draw[-latex,thin](2,0)--++(-2,0);
       \draw[-latex, thin](2,0)--++(-1.8,-1.3);
        \draw[-latex, thin](2,0)--++(1,-1.7);
	\draw[-latex, thin](2,0)--++(2.5,-0.5);
        \draw[-latex, thin](2,0)--++(2,1);
\node[label={[xshift=1cm, yshift=-1.8cm] Figure 2 }]{};
\end{scope}

		\begin{scope}[shift={(11,0)}]
\draw[-,color=gray] (2,1).. controls +(-0.7,-0.7) ..  (0.3,0.2);\draw[-,color=gray] (2,1).. controls +(-0.3,0.4) and +(1, 0.5) ..
(0.5,-1);
\draw[-,color=gray] (2,1).. controls +(0.2,0.3) and +(-0.1, -0.1)
..
(2.8,-1.2);
\draw[-,color=gray] (2,1) .. controls +(0.2,0.5) and +(-1.1, 0.3)
..
(4.2,-0.2);
\draw[-,color=gray] (2,1).. controls +(0.6,-0.3)  ..  (3.5,1);		\draw[-latex,thin](2,0)--++(-2,0);
        \draw[-latex, thin](2,0)--++(-1.8,-1.3);
        \draw[-latex, thin](2,0)--++(1,-1.7);
		\draw[-latex, thin](2,0)--++(2.5,-0.5);
       \draw[-latex, thin](2,0)--++(2,1);
	\end{scope}
\end{tikzpicture}

It was shown in \cite{AdaNoj14} that for
$-N\sqrt{\omega}<\alpha<\alpha^*<0$,
the vector solution
$\bb\Phi_{\alpha,\delta}=(\varphi_{\alpha,\delta})_{j=1}
^N:=\bb\Phi^\alpha_0$,
 \begin{equation}\label{Phi_vect} 
\varphi_{\alpha,\delta}:=\varphi_{0,j}^\alpha(x)=
 \Big [\frac{(p+1)\omega}{2}
\sech^2 \Big (\frac{(p-1)\sqrt{\omega}}{2}x+\tanh^{-1} \Big (\frac{-\alpha}
{N\sqrt{\omega}} \Big ) \Big ) \Big ]^{\frac{1}{p-1}}\end{equation}
is the ground
state.
The parameter $\alpha^*$ above originates from the variational
problem associated
with equation \eqref{H_alpha}, and it guarantees constrained
minimality of the
action functional
 \begin{equation}\label{S_graph}
\bb S_\alpha(\bb V)=\tfrac 1{2}||\bb
V'||^2+\tfrac{\omega}{2}||\bb V||^2 -\tfrac
1{p+1}||\bb V||_{p+1}^{p+1}+\tfrac{\alpha}{2}|v_1(0)|^2, \quad
\bb V=(v_j)_{j=1}
^N\in\mathcal E(\mathcal{G}).
 \end{equation}
Namely, the vector solution $\mathbf{\Phi}_{\alpha,\delta}$ is
the ground state
in the sense of the minimality of $\bb S_\alpha(\bb V)$ at
$\mathbf{\Phi}_{\alpha,
\delta}$ with the constraint given by the Nehari manifold 
 $$
\mathcal N=\{\bb V\in \mathcal E(\mathcal{G})\setminus\{0\}:
||\bb V'||^2+\omega
||\bb V||^2 - ||\bb V||_{p+1}^{p+1}+\alpha |v_1(0)|^2=0\}.
 $$ 
For $\alpha>0$ the $N$-bump profile
$\mathbf{\Phi}_{\alpha,\delta}$ does not
have the variational characterization (see \cite[Remark
14]{FukJea08}).
In \cite{AdaNoj14} the following orbital stability result has
been shown.
 \begin{theorem}\cite[Theorem 2]{AdaNoj14}\label{st_graph}
Let $1<p\leq 5$,\,\,$\alpha<\alpha^*<0$, and
$\omega>\tfrac{\alpha^2}{N^2}$. Then
the standing wave $e^{i\omega t}\mathbf{\Phi}_{\alpha,\delta}$ is
orbitally stable
in 
$\mathcal{E}(\mathcal{G})$.
\end{theorem}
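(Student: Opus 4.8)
The plan is to prove orbital stability by the variational (Cazenave--Lions) method, exploiting that the hypothesis $\alpha<\alpha^*<0$ is precisely what makes $\mathbf{\Phi}_0^\alpha$ a genuine constrained minimizer. First I would record the two conserved quantities of \eqref{NLS_graph}: the charge $Q(\mathbf{U})=\tfrac12\|\mathbf{U}\|^2$ and the energy $E(\mathbf{U})=\tfrac12\|\mathbf{U}'\|^2+\tfrac{\alpha}{2}|u_1(0)|^2-\tfrac1{p+1}\|\mathbf{U}\|_{p+1}^{p+1}$, so that $S_\omega^\alpha=E+\omega Q$ and $(S_\omega^\alpha)'(\mathbf{\Phi}_0^\alpha)=0$ is exactly \eqref{H_alpha}. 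I would then recast the ground-state property (given for $\alpha<\alpha^*$ as minimality of \eqref{S_graph} on the Nehari manifold $\mathcal N$) as a minimization of $E$ at fixed charge,
$$m(\rho)=\inf\{E(\mathbf{U}):\mathbf{U}\in\mathcal{E}(\mathcal{G}),\ Q(\mathbf{U})=\rho\},\qquad \rho=Q(\mathbf{\Phi}_0^\alpha),$$
with $\omega$ entering as the Lagrange multiplier. By the constrained minimality valid for $\alpha<\alpha^*$, the set of minimizers coincides, modulo phase and edge permutations (which fix the symmetric $N$-tail profile \eqref{Phi_vect}), with the orbit $\mathcal{A}=\{e^{i\theta}\mathbf{\Phi}_0^\alpha:\theta\in\mathbb{R}\}$.

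The central step is to show that every minimizing sequence for $m(\rho)$ is relatively compact in $H^1(\mathcal{G})$ up to a phase. I would first secure an $H^1$-bound: for $1<p<5$ the Gagliardo--Nirenberg inequality on the half-lines makes $E$ coercive on the charge sphere, while in the mass-critical case $p=5$ coercivity near the minimizing level is recovered from the attractive boundary term $\tfrac{\alpha}{2}|u_1(0)|^2$ with $\alpha<0$. Since the graph carries no translation symmetry, Lions' concentration--compactness reduces to excluding vanishing and dichotomy, and both are controlled by the single strict binding inequality
$$m(\rho)<m_\infty(\rho_1)+m_\infty(\rho-\rho_1),\qquad 0\leq\rho_1\leq\rho,$$
where $m_\infty$ is the infimum of the interaction-free problem on the line. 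Vanishing is excluded because a spreading sequence sees a negligible vertex and hence cannot beat the free threshold $m_\infty(\rho)$; dichotomy is excluded because splitting the mass loses the energy gain of the attractive vertex, making the above inequality strict. Compactness then yields a subsequence converging strongly, after a phase rotation, to a minimizer, hence to an element of $\mathcal{A}$.

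Finally I would close the argument by contradiction in the standard way. Assuming instability produces initial data $\mathbf{U}_0^n\to\mathbf{\Phi}_0^\alpha$ in $\mathcal{E}(\mathcal{G})$ whose solutions (global for $p\leq5$ near the ground state by the well-posedness of \eqref{NLS_graph}) leave a fixed neighborhood of $\mathcal{A}$ at times $t_n$. Conservation of $E$ and $Q$ forces $\{\mathbf{U}^n(t_n)\}$ to be a minimizing sequence for $m(\rho)$; the compactness step then gives convergence, up to phase, to a point of $\mathcal{A}$, contradicting that it left the neighborhood. The hardest part will be the concentration--compactness analysis, and within it the strict subadditivity (binding) inequality together with the $H^1$-coercivity in the critical case $p=5$; these are exactly the points where the sign $\alpha<0$ and the threshold $\alpha<\alpha^*$ enter, guaranteeing that the attractive interaction binds the mass at the vertex rather than letting it escape along the half-lines.
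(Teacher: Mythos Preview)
Your outline is essentially the Cazenave--Lions variational approach, and it is in spirit the proof strategy of the original reference \cite{AdaNoj14}. It can be made to work, but be aware of one nontrivial point you gloss over: the hypothesis $\alpha<\alpha^*$ gives you that $\mathbf{\Phi}_0^\alpha$ minimizes $S_\omega^\alpha$ on the Nehari manifold $\mathcal N$, \emph{not} that it minimizes $E$ at fixed charge. Your ``recasting'' step is not automatic; Nehari ground states and fixed-mass energy minimizers are different variational problems, and passing from one to the other (or establishing the latter directly) requires its own argument---this is part of what \cite{AdaNoj14,AdaNoj15} actually do. The concentration--compactness on a star graph and the $p=5$ coercivity are also genuine work, as you acknowledge.

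That said, the present paper takes a completely different route. Instead of any variational compactness argument, it applies the Grillakis--Shatah--Strauss criterion (Theorem~\ref{stabil_graph}) and reduces everything to verifying the spectral condition $n(\mathbf{L}^\alpha_{1,\omega})=1$. The novelty is Proposition~\ref{grafoN}: one exhibits a nonnegative symmetric restriction $\mathbf{L}_0$ of $\mathbf{L}^\alpha_{1,\omega}$ (nonnegativity coming from the factorization $-v_j''+\omega v_j-p(\varphi_0^\alpha)^{p-1}v_j=-\tfrac{1}{(\varphi_0^\alpha)'}\tfrac{d}{dx}\big[((\varphi_0^\alpha)')^2\tfrac{d}{dx}(v_j/(\varphi_0^\alpha)')\big]$), computes its deficiency indices to be $n_\pm(\mathbf{L}_0)=1$, and invokes Proposition~\ref{semibounded} to get $n(\mathbf{L}^\alpha_{1,\omega})\le 1$; the reverse inequality follows from $(\mathbf{L}^\alpha_{1,\omega}\mathbf{\Phi}_0^\alpha,\mathbf{\Phi}_0^\alpha)<0$. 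One then concludes via the slope condition $\partial_\omega\|\mathbf{\Phi}_0^\alpha\|^2>0$ from \cite{AdaNoj14}.

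What each approach buys: your variational method is conceptually self-contained but intrinsically tied to the ground-state threshold $\alpha<\alpha^*$, and the concentration--compactness step on $\mathcal G$ is lengthy. The paper's extension-theory argument is short, avoids compactness altogether, and---this is the main point of Section~4.1---works for \emph{every} $\alpha<0$, removing the restriction $\alpha<\alpha^*$ that your method cannot dispense with.
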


The authors in \cite{AdaNoj14} showed also that for $p>5$ there
exists $
\mathbf{\omega}^*>\frac{\alpha^2}{N^2}$ such that $e^{i\omega
t}\mathbf{\Phi}
_{\alpha,\delta}$
is stable in $\mathcal{E}(\mathcal{G})$ for any
$\omega\in (\frac{\alpha^2}
{N^2},\mathbf{\omega}^* )$ and unstable for any $\omega>
\mathbf{\omega}^*$.
Stronger version of the above theorem was proved in \cite[Theorem
1]{AdaNoj15}. In
particular, the authors proved orbital stability of $e^{i\omega
t}\mathbf{\Phi}
_{\alpha,\delta}$ for $\alpha<0$ without restriction
$\alpha<\alpha^*<0$.
For $m\neq 0$, $\alpha<0$ in Theorem \ref{1bump} we have 
$S(\bb\Phi^\alpha_m)>S(\bb\Phi^\alpha_0)$ which means that
$\bb\Phi^\alpha_m$ for
$m\neq 0$ is \textit{an excited state}. Stability properties of
the excited states
as well as of $\bb\Phi^\alpha_m$ for $\alpha>0$ were studied in
\cite{AngGol17a}.

To our knowledge, the problem of orbital stability of standing
waves $\mathbf{U}
(t,x)=e^{i\omega t}\mathbf{\Phi}(x)$ has never been considered
for NLS-$\delta'$
equation \eqref{NLS_graph'} on the star graph.
In the present paper we study the orbital stability of the
standing waves $
\mathbf{U}(t,x)=e^{i\omega t}\mathbf{\Phi}_{\lambda,\delta'}$
with $N$-tail profile
$\mathbf{\Phi}_{\lambda,\delta'}=(\varphi_{\lambda,\delta'})_{j=1}^N$,
where
\begin{equation}\label{varphi_lam}\varphi_{\lambda,\delta'}
(x)= \Big [\frac{(p+1)\omega}{2}
\sech^2 \Big (\frac{(p-1)\sqrt{\omega}}{2}x+
\tanh^{-1} \Big (\frac{-N}{\lambda\sqrt{\omega}} \Big ) \Big ) \Big ]^{\tfrac{1}
{p-1}},
\end{equation}
with  $\omega>\tfrac{N^2}{\lambda^2}$ and $\lambda<0$.
In Section 4 we prove  new result on stability of
$e^{i\omega t}
\mathbf{\Phi}_{\lambda,\delta'}$ (see Theorem \ref{Main}). 

\begin{remark}\label{gensolu}
\rm
The description of the set of all solutions to the stationary
equation
\begin{equation}\label{stat_delta'}
\mathbf{H}_\lambda^{\delta'}\mathbf{\Phi}+\omega\mathbf{\Phi}-|\mathbf{\Phi}|
^{p-1}\mathbf{\Phi}=0,
\end{equation} 
is unknown. We note that any $L^2$-solution to
\eqref{stat_delta'} has the form
$$
\bb
\Phi(x)=(\varphi_j(x))_{j=1}^N=(\sigma_j\varphi_0(x+x_j))_{j=1}^N,
$$
where $\sigma_j\in\mathbb{C},|\sigma_j|=1$, $x_j\in\mathbb{R}$,
and $
\varphi_0(x)= [\frac{(p+1)\omega}{2}
\sech^2 (\frac{(p-1)\sqrt{\omega}}{2}
x ) ]^{\frac{1}{p-1}}$.
Hence, denoting $t_j=\tanh(x_j)$, from \eqref{H_lambda} we get
the relations
\begin{equation*}\label{prof_lambda}
\left\{\begin{array}{c}
\sigma_1(1-t_1)^{\frac{1}{p-1}}t_1=...=\sigma_N(1-t_N)^{\frac{1}{p-1}}t_N,\\
\sum\limits_{j=1}^N\sigma_j(1-t_j)^{\frac{1}{p-1}}=-\lambda\sqrt{\omega}
\sigma_1(1-t_1)^{\frac{1}{p-1}}t_1.
\end{array}\right.
\end{equation*}
In \cite{AdaNoj13a}, for the case of $\mathcal{G}=\mathbb{R}$
($\delta'$-
interaction on the line), the authors established the existence
of two families
(odd and asymmetric) of solutions to \eqref{stat_delta'}. For
$N\geq3$, it seems
to be very nontrivial problem to determine a complete description
of the
solutions to \eqref{stat_delta'}. Observe that in the case of
NLS-$\delta$
equation
  the situation is easier since the continuity condition
$\varphi_1(0)=...=\varphi_N(0)$ implies
$|\varphi'_1(0)|=...=|\varphi'_N(0)|$,
therefore, $\sigma_1=...=\sigma_N$  and $x_j=\pm a,\, a>0.$
\end{remark}
In the case of NLS-log-$\delta$ equation the profile of the
standing wave
$e^{i\omega t}\bb\Psi$  satisfies the equality
 \begin{equation}\label{ellip_log}
\mathbf{H}_\alpha^\delta\mathbf{\Psi}+\omega\mathbf{\Psi}-\mathbf{\Psi}\logg|
\mathbf{\Psi}|^2=0.
\end{equation}
From \cite{Ard16} it follows that solutions to \eqref{ellip_log}
have the
following description.
  \begin{theorem}\label{prof_log}
Let $\alpha\neq 0$. Then equation \eqref{ellip_log} has
$\left[\tfrac{N-1}
{2}\right]+1$ vector solutions
$\mathbf{\Psi}_m^\alpha=(\psi^\alpha_{m,j})_{j=1}
^N, \,\,m=0,...,\left[\tfrac{N-1}{2}\right],$ given by 
\begin{align*}
 \psi_{m,j}^\alpha(x)= \left\{
                    \begin{array}{ll}
e^{\tfrac{\omega+1}{2}}e^{-\tfrac{(x-a_m)^2}{2}}, & \,\hbox{$j=1,...,m$;} \\
e^{\tfrac{\omega+1}{2}}e^{-\tfrac{(x+a_m)^2}{2}}, & \,\hbox{$j=m+1,...,N,$}
                    \end{array}
                  \right.
                \,\,\text{where}\,\, a_m=\frac{\alpha}{2m-N}.
 \end{align*}
 \end{theorem}
 
We should note that the structure of the profiles that solve
\eqref{ellip_log}
is similar to the one in the case of NLS-$\delta$ equation (see
Remark
\ref{tails_bumps}).
It was proved in \cite{Ard16} that for
$\alpha<\alpha^*_{\logg}<0$, the vector
solution $
\mathbf{\Psi}_{\alpha,\delta}=(\psi_{\alpha,\delta})_{j=1}^N$
defined
by 
 \begin{equation}\label{Phi_vect_log} 
\psi_{\alpha,\delta}= \psi_{0,j}^\alpha(x)=
e^{\tfrac{\omega+1}{2}}e^{-\tfrac{(x-
\tfrac{\alpha}{N})^2}{2}}\end{equation} is the ground state.
The condition $\alpha<\alpha^*_{\logg}$ guarantees constrained
minimality of the
following action functional for $\bb V\in
W_{\mathcal{E}}(\mathcal{G})$,
\begin{equation}
\label{act_log} \bb S_{\alpha,\log}(\bb V)=
\tfrac 1{2}||\bb V'||^2+\tfrac{(\omega+1)}{2}
||\bb V||^2 -\tfrac 1{2}\sum\limits_{j=1}^N
\int _0^\infty|v_j|^2\logg|v_j|^2dx+\tfrac{\alpha}{2}|v_1(0)|^2.\end{equation}
Namely, the vector solution $\mathbf{\Psi}_{\alpha,\delta}$ is
the ground state
in the sense of the minimality of $\bb S_{\alpha,\log}(\bb V)$ at
$\mathbf{\Psi}
_{\alpha,\delta}$ with the constraint given by the Nehari
manifold $\mathcal N$,
namely, $\bb V\in \mathcal N$ if and only if $\bb V\in
W_{\mathcal{E}}
(\mathcal{G})\setminus\{0\}$ and 
$$
||\bb V'||^2+\omega||\bb V||^2 -\sum\limits_{j=1}^N \int
_0^\infty|v_j|^2\logg|
v_j|^2dx+\alpha|v_1(0)|^2=0.
 $$ 
In \cite{Ard16} the author proved that the standing wave
$e^{i\omega t}
\mathbf{\Psi}_{\alpha,\delta}$ is orbitally stable in 
$W_{\mathcal{E}}(\mathcal{G})$ for $\alpha<\alpha^*_\log<0$ and
$\omega\in
\mathbb{R}$.
Below we will overcome the restriction $\alpha<\alpha^*_\log$ in
the space
$W^1_{\mathcal{E}}(\mathcal{G})$ (see Theorem
\ref{stabil_log_delta}), moreover,
we will show spectral instability of the standing wave
$e^{i\omega t}\mathbf{\Psi}
_{\alpha,\delta}$ for any $\alpha>0$ ($\mathbf{\Psi}_{\alpha,\delta}$ is the $N$-bump profile in this
case).

Similarly to the previous case, we show that the $N$-tail
standing wave $e^{\omega
it}\bb \Psi_{\lambda,\delta'}$ for the NLS-log-$\delta'$ equation,
where
\begin{equation}\label{prof_log'}
\bb{\Psi}_{\lambda,\delta'}=(\psi_{\lambda,\delta'})_{j=1}^N,\quad
\psi_{\lambda,
\delta'}=e^{\tfrac{\omega+1}{2}}e^{-\tfrac{(x-\tfrac{N}{\lambda})^2}{2}},
 \end{equation}
is orbitally stable in $W^1(\mathcal{G})$ for $-N<\lambda<0$, and
spectrally
unstable for $\lambda<-N$ (see Theorem \ref{Main_log}). Note that
we do not need
to assume that $N$ is even to show the instability (compare with
Theorem
\ref{Main}).

\section{Stability theory of standing wave solutions for the
NLS-$\delta$ and
the NLS-$\delta'$  equations on a star graph} 

 \subsection{ The NLS-$\delta$ equation on a star graph}
 
In this Subsection we study the orbital stability of the standing
wave $
\mathbf{U}(t,x)=e^{i\omega t}\mathbf \Phi_{\alpha, \delta}(x)$ of
NLS-$\delta$
equation \eqref{NLS_graph} with the particular $N$-bump profile
$\mathbf{\Phi}
_{\alpha,\delta}=(\varphi_{\alpha, \delta})_{j=1}^N$  defined by \eqref{Phi_vect}. As we are investigating orbital stability in
$\mathcal
E(\mathcal{G})$ we need to use the well-posedness of the initial
value problem
for equation \eqref{NLS_graph} in this space. In \cite{AdaNoj14}
the authors
established the results on local and global well-posedness of
\eqref{NLS_graph} in
$\mathcal E(\mathcal{G})$. Below we complete and extend these
results, aiming to
use them in the sequel for  our instability analysis.

First, we establish the following property for the unitary group
associated to
\eqref{NLS_graph}. 
\begin{lemma}\label{rela00} Let 
$\{e^{-it\bb H_{\delta}
^\alpha}\}_{t\in \mathbb R}$ be the family of unitary operators
associated to
 NLS-$\delta$ model \eqref{NLS_graph}. Then, for every $ \bb
V=(v_j)_{i=1}^N\in
\EE(\mathcal{G})$ we have 
 \begin{equation}\label{group_comut0}
\partial_x(e^{-it\bb H_{\delta}^\alpha }\bb V)=-e^{-it\bb
H_{\delta}^\alpha}\bb V'
+ 
\mathcal B(\bb V'),
\end{equation}
where $\mathcal B(\bb
V')=(2e^{it\partial^2_x}\tilde{v}_j)_{j=1}^N$, with $
\tilde{v}_j(x)=\left\{\begin{array}{c}
v'_j(x),\,\ x\geq 0,\\
0,\quad\;\;\; x<0
\end{array}\right.$, and $e^{it\partial^2_x}$ is the unitary
group associated with
the free Schr\"odinger operator on $\mathbb{R}$.
\end{lemma}

\begin{proof}[\bf Proof.]
 Without loss of generality we assume that $\alpha>0$.
Using functional calculus for unbounded self-adjoint operators
and the classical
expression for the resolvent of $-\frac{d^2}{dx^2}$ on the
positive half-line we
get the formulas 
\begin{equation}\label{group0}
e^{-it\bb H_\delta^\alpha}\bb V(x)=\tfrac{i}{\pi} \int
_{-\infty}^\infty e^{-
it\tau^2}\tau \bb R_{i\tau}\bb V(x)d\tau,
\end{equation}
where $\bb R_{\mu}\bb V=(\bb H_\delta^\alpha+\mu^2 I)^{-1}\bb V$
has the
components 
 \begin{equation}\label{res0}
(\bb R_{\mu}\bb V)_j(x)=\tilde{c}_je^{-\mu x}+\frac{1}{2\mu} \int
_0^\infty
v_j(y)e^{-|x-y|\mu}dy.  
 \end{equation}
The coefficients $\tilde{c}_j$ are determined by the condition
$\bb R_{\mu}\bb
V\in \bb D_{\alpha, \delta}$. It is easily seen (e.g.
\cite[Appendix-6]
{BanIgn14}) that
$ 
\bb V\in \bb D_{\alpha,\delta}$  iff $ A\bb
V(0)+B\bb V'(0)=\bb 0,
$
 where
 \small
  $$
   A=\left(\begin{array}{ccccc}
  1&-1&0&...&0\\
  0&1&-1&...&0\\
  \vdots &\vdots&\vdots&  &\vdots\\
  0&0&0&...&-1\\
\tfrac{\alpha}{N}&\tfrac{\alpha}{N}&\tfrac{\alpha}{N}&...&\tfrac{\alpha}{N}
  \end{array}\right),\quad B=\left(\begin{array}{ccccc}
  0& & ... & &0\\
  0&  & & & 0\\
  \vdots & & &  &\vdots\\
  & & & & \\
  -1 & &...& &-1
  \end{array}\right) .
  $$
  \normalsize
Let $ t_j(\mu)=\frac12 \int _0^\infty v_j(y)e^{-\mu y}dy$, then
from
\eqref{res0} we get
$(\bb R_{\mu}\bb V)_j(0)= \tilde{c}_j+\frac{1}{\mu} t_j(\mu)$ and
$\partial_x[(\bb
R_{\mu}\bb V)_j](0)= -\mu \tilde{c}_j+ t_j(\mu)$. Therefore,
$(\tilde{c}_j)_{j=1}
^N$ is  the unique solution to the system
\small
\begin{equation}\label{c_tilde_delta}
{ \left(\begin{array}{ccccc}
  1&-1&0&...&0\\
  0&1&-1&...&0\\
    \vdots &\vdots&\vdots&  &\vdots\\
  0&0&0&...&-1\\
  \tfrac{\alpha}{N}+\mu&\tfrac{\alpha}{N}+\mu&\tfrac{\alpha}{N}+
\mu&...&\tfrac{\alpha}{N}+\mu
  \end{array}\right)\left(\begin{array}{ccccc}\tilde{c}_1\\
  \\
 \vdots \\
  \\
  \tilde{c}_N
  \end{array}
\right) 
\! = \!
-\frac{1}{\mu}\left(\begin{array}{ccccc}t_1(\mu)-t_2(\mu)\\
  \vdots\\
  \\
 t_{N-1}(\mu)-t_N(\mu) \\
  (\frac{\alpha}{N}-\mu)\sum\limits_{j=1}^Nt_j(\mu) \end{array}
  \right).}
\end{equation}
\normalsize

Below we find $\bb R_{\mu}\bb V'$. Suppose initially that $v_j\in
C^{\infty}
_0(\mathbb R_+)$, $1\leq j\le N$, then there are coefficients
$\tilde{d}_j$ such
that
\begin{equation}\label{U0}
\begin{aligned}
(\bb R_{\mu}\bb V')_j(x)&=\tilde{d}_je^{-\mu x}+\frac{1}{2\mu}
\int _0^\infty
v'_j(y)e^{-\mu |x-y|}dy\\
&=\tilde{d}_je^{-\mu x}-\frac{1}{2} \int _0^\infty
v_j(y)\sign(x-y)e^{-\mu |x-y|}
dy,
\end{aligned}
\end{equation}
where in the last equality we have used integration by parts.
Thus, we obtain $
(\bb R_{\mu}\bb V')_j(0)=\tilde{d}_j+t_j(\mu)$. Moreover, since
\begin{equation*}\label{U''}
\begin{aligned}
\partial_x (\bb R_{\mu}\bb V')_j(x)&=-\mu \tilde{d}_je^{-\mu
x}-\frac{1}{2} \int
_0^\infty v'_j(y)\sign(x-y)e^{-\mu |x-y|}dy,
\end{aligned}
\end{equation*}
it follows from integration by parts $\partial_x (\bb R_{\mu}\bb
V')_j(0)=-\mu
\tilde{d}_j +\mu t_j(\mu)$. Hence from the uniqueness of solution
to system
(\ref{c_tilde_delta}) it follows that $\bb R_{\mu}\bb V'\in \bb
D_{\alpha,\delta}
$ iff $\tilde{d}_j=\mu \tilde{c}_j$. Therefore, we
obtain from
\eqref{res0} and the  second equality in \eqref{U0} 
\begin{equation*}\label{comut}
\begin{aligned}
\partial_x (\bb R_{\mu}\bb V)_j(x)&= -(\bb R_{\mu}\bb V')_j(x)-
\int _0^\infty
v_j(y)\sign(x-y)e^{-\mu |x-y|}dy\\
&=-(\bb R_{\mu}\bb V')_j(x)+\frac{1}{\mu} \int _0^\infty
v'_j(y)e^{-\mu |x-y|}dy.
\end{aligned}
\end{equation*}
Thus, from representation \eqref{group0} we get
$$
\partial_x(e^{-it\bb H_\lambda^{\delta'}}\bb V)=-e^{-it\bb
H_\lambda^{\delta'} }
\bb V' +\mathcal B(\bb V'),
$$
where
$$
(\mathcal B(\bb V'))_j(x)= \frac{1}{\pi} \int _{-\infty}^\infty
e^{-it\tau^2}
\int _0^\infty v'_j(y)e^{-i\tau |x-y|}dyd\tau.
$$

Below we find $\mathcal B(\bb V')$. It is well-known that
$e^{it\partial_x^2}$
can be represented as $e^{it\partial_x^2}\phi=S_t\ast \phi$,
where $\widehat{S_t}
(\xi)=e^{-it\xi^2}$.  Since for $t\neq 0$ and $x\in \mathbb R$
$$
S_t(x)=\frac{1}{2\pi} \int _{-\infty}^\infty e^{-it\tau^2}
e^{i\tau x} d\tau=
\frac{1}{2\pi}\frac{\sqrt{\pi}}{\sqrt{-t}} e^{i\pi/4} e^{i
\frac{x^2}{4 t}}
=\Big(\frac{1}{4\pi it}\Big)^{1/2} e^{i \frac{x^2}{4 t}},
$$  
it follows for $\phi(x)=\left\{\begin{array}{c}
v'_j(x),\,\ x\geq 0,\\
0,\quad\;\;\; x<0
\end{array}\right.$
\begin{equation}\label{I0}
\begin{aligned}
I=&\frac{1}{\pi} \int _{-\infty}^\infty e^{-it\tau^2} \int
_{-\infty}^\infty
\phi(y)\chi_{[0, x]}(y)e^{i\tau (y-x)}dyd\tau \\
=&2 \int _{-\infty}^\infty \phi(y)\chi_{[0, +\infty)}(x-y)
S_{t}(x-y)dy=2
(\chi_{[0, +\infty)} S_{t})\ast \phi (x).
\end{aligned}
\end{equation}
Similarly, 
\begin{equation}\label{II0}
II=\frac{1}{\pi} \int _{-\infty}^\infty e^{-it\tau^2} \int
_{-\infty}^\infty
\phi(y)\chi_{[x, +\infty)}(y)e^{i\tau (x-y)}dyd\tau= 2
(\chi_{(-\infty, 0])}
S_{t})\ast \phi (x).
\end{equation}
Thus, from \eqref{I0}-\eqref{II0} we have $
(\mathcal B(\bb V'))_j(x)=I+II=2S_{t}\ast \phi
(x)=2e^{it\partial_x^2}\phi (x)$.
Hence relation \eqref{group_comut0} follows provided that each
component of $\bb
V$ has compact support. The general case follows from a density
argument.
\end{proof} 

\begin{remark} 
\rm
Observe that 
$
e^{-it\bb H_{\delta}^\alpha} \bb V= e^{-it\bb
H_{\delta}^\alpha}\bb P_c \bb V+
e^{-it\bb H_{\delta}^\alpha}\bb P_p \bb V,
$
where $\bb P_c$ and $\bb P_p$ are $L^2$-orthogonal projections
onto the subspaces
corresponding to the continuous and the discrete spectral part of
$\bb
H_{\delta}^\alpha$.  
For $\alpha>0$, we have $\sigma_c(\bb
H_{\delta}^\alpha)=[0,\infty)$ and $
\sigma_p(\bb H_{\delta}^\alpha)=\emptyset$, therefore $\bb
P_p=\bb 0$. For $\alpha<0$, $\sigma_c(\bb H_{\delta}^\alpha)=[0,\infty)$ and
$\sigma_p(\bb
H_{\delta}^\alpha)=\{-z_0^2\}=\{-\frac{\alpha^2}{N^2}\}$, where the
corresponding
eigenfunction is $\bb
V_{z_0}(x)=(e^{\frac{\alpha}{N}x})_{j=1}^N$, and therefore
$e^{-it\bb H_{\delta}^\alpha}\bb P_p \bb V=e^{itz_0^2}(\bb V,\bb
V_{z_0})\bb
V_{z_0}$. In this case  formula \eqref{group0} takes the form
\begin{equation*}
e^{-it\bb H_\delta^\alpha}\bb V(x)=\tfrac{i}{\pi} \int
_{-\infty}^\infty e^{-
it\tau^2}\tau \bb R_{i\tau}\bb V(x)d\tau+e^{itz_0^2}(\bb V,\bb
V_{z_0})\bb V_{z_0}
(x),
\end{equation*}
which however does not affect the proof of the well-posedness
result.
The proof of the spectral properties of $\bb H_{\delta}^\alpha$
repeats the one
of \cite[Theorem 3.1.4]{AlbGes05} for the case of the
Schr\"odinger operator with
the $\delta$-interaction on the line. In particular, to describe
the point
spectrum for $\alpha<0$ one needs to consider $\bb
H_{\delta}^\alpha$ as the
self-adjoint extension of the symmetric non-negative operator
$\bb L$ defined by
\eqref{L_symm} with deficiency indices $n_\pm(\bb L)=1$ and then
to apply
Proposition \ref{semibounded}.
\end{remark}

\begin{lemma}\label{persistence} The family of
unitary operators $\{e^{-it\bb H^{\alpha}_\delta}\}_{t\in \mathbb
R}$ on
$L^2(\mathcal G)$ preserves the space $\mathcal E(\mathcal G)$,
i.e. for $
\bb V\in \mathcal E (\mathcal G)$ we have $e^{-it\bb
H_{\delta}^\alpha} \bb
V\in  \mathcal E(\mathcal G)$.
\end{lemma}

\begin{proof}[\bf Proof.]
Assume $\alpha>0$. Let $\bb V
\in \mathcal E
(\mathcal G)$, then it follows from \eqref{group_comut0} that
$e^{-it\bb
H_{\delta}^\alpha} \bb V\in H^1(\mathcal G)$. Further, since
$\bf{R}_\mu \bb V\in
\bb D_{\alpha, \delta}$, we get from \eqref{group0} the equality
$$(e^{-it\bb
H_{\delta}^\alpha}\bb V)_1(0)=...=(e^{-it\bb
H_{\delta}^\alpha}\bb V)_N(0).$$
\end{proof}

\begin{theorem}\label{well0} 
Let $p>1$. Then for any $\bb U_0\in \mathcal
E(\mathcal{G})$
there exists $T > 0$ such that 
equation \eqref{NLS_graph} has a unique solution $\bb U \in C
([-T,T],
\mathcal E(\mathcal{G}))
\cap C^1 ([-T,T], \mathcal E'(\mathcal{G}))$ satisfying  $\bb
U(0)=\bb U_0$. For
each $T_0\in (0, T)$ the mapping 
$
\bb U_0\in \mathcal E (\mathcal{G})\to \bb U \in C ([-T_0,T_0],
\mathcal
E(\mathcal{G})),
$
is continuous. In particular, for $p>2$ this mapping is at least
of class $C^2$.
Moreover, if $\bb U_0\in\mathcal E_k(\mathcal{G})$, then $\bb
U(t)\in \mathcal
E_k(\mathcal{G})$ for all $t\in [-T,T]$.

\end{theorem}

\begin{proof}[\bf Proof.]
The local well-posedness result in $\mathcal E(\mathcal{G})$
follows from
standard arguments of the Banach fixed point theorem applied to
non-linear
Schr\"odinger equations (see \cite{Caz03}). We will give the
sketch of the proof
for the case $\alpha>0$. Consider the mapping $J_{\bb U_0}:
C([-T, T], \mathcal
E(\mathcal{G}))\longrightarrow C([-T,T], \mathcal
E(\mathcal{G}))$ given by
$$
J_{\bb U_0}[\bb U](t)=e^{-it\bb H_{\delta}^\alpha}\bb U_0+i \int
_0^te^{-i(t-s)\bb
H_{\delta}^\alpha}|\bb U(s)|^{p-1}\bb U(s)ds,
$$
where $e^{-it\bb H_{\delta}^\alpha}$ is the unitary group given
by
\eqref{group0}. One needs to
show that the mapping $J_{\bb U_0}$ is well-defined. To do this it is necessary 
 to
estimate initially the nonlinear term $|\bb U(s)|^{p-1}\bb U(s)$. Using
 the one-dimensional Gagliardo-Nirenberg inequality one may
show (see
formula (2.3) in \cite{AdaNoj14})
\begin{equation}\label{G-N_graph0}
\|\bb U\|_q \leq C\|\bb U'\|^{\frac12-\frac1q}\|\bb
U\|^{\frac12+\frac1q},\quad
q>2,\, C>0.
\end{equation}
Using \eqref{G-N_graph0}, the relation $|(|f|^{p-1} f)'|\leq C_0
|f|^{p-1} |f'|$
and H\"older's inequality, we obtain for $\bb U\in
H^1(\mathcal{G})$
\begin{equation}\label{pres_space0}
|||\bb U|^{p-1}\bb U||_{H^1(\mathcal{G})}\leq C_1||\bb
U||^p_{H^1(\mathcal{G})}.
\end{equation}  
Let $\bb U_0, \bb U \in \mathcal E(\mathcal{G})$, then  
from Lemmas \ref{rela00}-\ref{persistence} and
\eqref{pres_space0} it follows
that $J_{\bb U_0}[\bb U](t)\in \mathcal E(\mathcal{G})$.
Moreover, using
\eqref{group_comut0}, \eqref{pres_space0}, $L^2$-unitarity of
$e^{-it\bb
H_{\delta}^\alpha}$ and  $e^{it\partial^2_x}$, we get 
\begin{equation*}\label{neq_well_10}
||J_{\bb U_0}[\bb U](t)||_{H^1(\mathcal{G})}\leq C_2||\bb
U_0||_{H^1(\mathcal{G})}
+C_3T\sup\limits_{s\in[0,T]}||\bb U(s)||^p_{H^1(\mathcal{G})},
\end{equation*}
where the positive constants $C_2, C_3$ do not depend on $\bb
U_0$. The
continuity and contraction property of $J_{\bb U_0}$ are proved
in a standard
way. Therefore, we obtain the existence of a unique solution to
the Cauchy problem
associated to  \eqref{NLS_graph}  on $\mathcal E(\mathcal{G})$.

Next, we recall that the argument based on the contraction
mapping principle above
has the advantage that if the nonlinearity $F(\bb U,
\overline{\bb U})=|\bb U|
^{p-1}\bb U$ has a specific regularity, 
then it is inherited by the mapping data-solution. In particular,
following the
ideas in the proof of \cite[Corollary 5.6]{LinPon09}, we consider
for $(\bb V_0,
\bb V)\in B(\bb U_0;\epsilon)\times C([-T, T], \mathcal
E(\mathcal{G})) $ the
mapping
$$
\Gamma(\bb V_0, \bb V)(t)=\bb V(t)- J_{\bb V_0}[\bb V](t),\qquad
t\in [-T, T].
$$
Then $\Gamma(\bb U_0, \bb U)(t)=0$ for all $t\in [-T, T]$. For
$p-1$ being an even
integer, $F(\bb U, \overline{\bb U})$ is smooth, and therefore
$\Gamma$ is
smooth. Hence, using the arguments applied for obtaining the
local well-posedness
in $\mathcal E(\mathcal{G})$ above, we can show that the operator
$\partial_\bb
V\Gamma(\bb U_0, \bb U)$ is one-to-one and onto. Thus, by the
Implicit Function
Theorem there exists a smooth mapping $\bb \Lambda: B(\bb
U_0;\delta)\to C([-T,
T], \mathcal E(\mathcal{G}))$ such that $\Gamma(\bb V_0, \bb
\Lambda (\bb V_0))=0$
for all $\bb V_0\in B(\bb U_0;\delta)$. This argument establishes
the smoothness
property of the mapping data-solution associated to equation
\eqref{NLS_graph'}
when  $p-1$ is an even integer.  

If $p-1$ is not an even integer and $p>2$, then $F(\bb U,
\overline{\bb U})$ is
$C^{[p]}$-function, and consequently the mapping data-solution is
of class
$C^{[p]}$ (see \cite[Remark 5.7]{LinPon09}). Therefore, for $p>2$
we conclude that
the mapping data-solution is at least of class $C^{2}$.

Next,  we show that the unitary group 
$e^{-it\bb H_{\delta}^\alpha}$ 
preserves the subspace $\mathcal E_k(\mathcal G)$. 
Indeed,  let  $\bb V=(v_j) \in \mathcal E_k(\mathcal G)$, 
then  we obtain  
 $t_1(\mu)=...=t_k(\mu)$ and $t_{k+ 1}(\mu)=...=t_N(\mu)$, 
 where $ t_j(\mu)=\frac12  \int _0^\infty v_j(y)e^{-\mu y}dy$. 
 Hence, from \eqref{c_tilde_delta} it follows 
 $\tilde{c}_1=...=\tilde{c}_k$ and 
 $\tilde{c}_{k+ 1}=...=\tilde{c}_N$.  
  Thus, by \eqref{group0} we get 
$e^{-it\bb H_{\delta}^\alpha}\bb V\in \mathcal E_k(\mathcal G)$.
Lastly, the well-posedness in $\EE_k(\mathcal{G})$ follows 
from the uniqueness of the solution to the Cauchy problem in 
$\EE(\mathcal{G})$ and the invariance  of the space 
$\EE_k(\mathcal{G})$ for the  unitary group 
$e^{-it\bb H_{\delta}^\alpha}$ shown above.
\end{proof}

\begin{remark} 
\rm 
 $(i)$ 
In \cite[Proposition 2.2]{AdaNoj14} the authors proved that for
any solution to
Cauchy problem associated with \eqref{NLS_graph}, the
conservation of charge and
energy hold, i.e.
  $$
Q(\bb U(t))=||\bb U(t)||^2=||\bb
U_0||^2,\,\,\,\text{and}\,\,\,E_\alpha(\bb U(t))
= E_\alpha(\bb U_0),\,t\in[-T,T], 
$$
where $E_\alpha$ is defined for $\bb V=(v_j)_{j=1}^N\in \mathcal
E(\mathcal{G})$
by 
 \begin{equation*}\label{energy_delta}
E_\alpha(\bb V)=\tfrac 1{2}||\bb V'||^2 -\tfrac 1{p+1}||\bb
V||_{p+1}^{p+1}+
\tfrac{\alpha}{2}\left|v_1(0)\right|^2.
\end{equation*}
Using the Sobolev embedding theorem, Gagliardo-Nirenberg
inequality \eqref{G-N_graph0}, the above conservation laws, one can induce global
well-posedness of
\eqref{NLS_graph} for  $1<p<5$ (i.e. we can choose $T=+\infty$).

 $(ii)$
Observe that $E_\alpha\in C^2(\mathcal{E}(\mathcal{G}),
\mathbb{R})$ since
$p>1$. This fact allows us to apply the results by Ohta \cite{oh}
in our
instability analysis.
  
\end{remark}
 
Next we introduce the basic objects of the classical theory by
Grillakis, Shatah
and Strauss.
Consider the following two self-adjoint matrix operators
associated with $\bb
\Phi_{\alpha,\delta}=(\varphi_{\alpha,\delta})_{j=1}^N$
 \begin{align*} 
\mathbf{L}_{1,\alpha}&= \Big ( \Big (-\frac{d^2}{dx^2}+\omega-
p(\varphi_{\alpha,\delta})^{p-1}\Big)\delta_{k,j} \Big ),\\
\mathbf{L}_{2,\alpha}&= \Big (\Big(-\frac{d^2}{dx^2}+\omega-(\varphi_{\alpha,
\delta})^{p-1}\Big)\delta_{k,j} \Big ),\\ \dom&(\mathbf{L}
_{1,\alpha})=\dom(\mathbf{L}_{2,\alpha})=\bb D_{\alpha,\delta},
\end{align*}
where $\delta_{k,j}$ is the Kronecker symbol, $\bb
D_{\alpha,\delta}$ and $\varphi_{\alpha,\delta}$ are  defined
by \eqref{D_alpha} and 
\eqref{Phi_vect}.
The operators $\mathbf{L}_{1,\alpha}$ and $\mathbf{L}_{2,\alpha}$
are associated
with the functional $\bb S_\alpha$ defined by \eqref{S_graph} via
the following
equality 
\begin{equation*}\label{L_1+L_2}
(\bb S_\alpha)''(\mathbf{\Phi}_{\alpha,\delta})(\mathbf{U}, 
\mathbf{V})=(\mathbf{L}_{1,\alpha}\mathbf{U}_1,
\mathbf{V}_1)+(\mathbf{L}
_{2,\alpha}\mathbf{U}_2, \mathbf{V}_2),\end{equation*}
where $\mathbf{U}=\mathbf{U}_1+i\mathbf{U}_2$ and
$\mathbf{V}=\mathbf{V}
_1+i\mathbf{V}_2$. The vector functions
$\mathbf{U}_j,\mathbf{V}_j,\,j\in\{1,2\},
$ are assumed to be real-valued.

Formally $(\bb S_\alpha)''(\mathbf{\Phi}_{\alpha,\delta})$ can be
considered as a
self-adjoint $2N\times 2N$ matrix operator (see \cite{GrilSha87,
GrilSha90} for
the details)
$
\bb H_\alpha=\left(\begin{array}{cc} \bb L_{1,\alpha}& \bb 0 \\
\bb 0 & \bb
L_{2,\alpha} \end{array}\right).
$ 
Define 
 \begin{equation*}\label{p(omega)}
 p(\omega_0)=\left\{\begin{array}{ll}
1 &\, \text{if}\
\partial_\omega||\mathbf{\Phi}_{\alpha,\delta}||^2>0\ \text{at}\
\omega=\omega_0, \\
0 & \, \text{if}\
\partial_\omega||\mathbf{\Phi}_{\alpha,\delta}||^2<0\ \text{at}
\ \omega=\omega_0.
  \end{array}\right. 
\end{equation*}  
Having established \emph{Assumptions 1, 2} in \cite{GrilSha87}, 
i.e. well-posedness of the associated Cauchy problem 
(see Theorem \ref{well0}) and the existence of 
$C^1$ in $\omega$  standing wave,  
the  next stability/instability result follows  
from \cite[Theorem 3]{GrilSha87} and 
\cite[Corollary 3 and 4]{oh}.

\begin{theorem}\label{stabil_graph}
 Let $\alpha\neq 0$,\, $\omega>\tfrac{\alpha^2}{N^2}$, 
and $n(\bb H_\alpha)$ be the number of negative eigenvalues of
$\bb H_\alpha$.
Suppose also  that 
 
$1)$\,\,$\ker(\bb
L_{2,\alpha})=\Span\{\mathbf{\Phi}_{\alpha,\delta}\}$,
 
 $2)$\,\,$\ker(\bb  L_{1,\alpha})=\{\bb 0\}$, 
  
  $3)$\,\, the negative spectrum of $\bb  L_{1,\alpha}$ and  
$\bb L_{2,\alpha}$ consists of a finite number of negative
eigenvalues
(counting multiplicities),
  
$4)$\,\, the rest of  the  spectrum of $\bb  L_{1,\alpha}$ and $\bb  L_{2,\alpha}$ is positive and bounded away from zero. 
  \
  Then the following assertions hold.
\begin{itemize}
  \item[$(i)$] If $n(\bb H_\alpha)=p(\omega)=1$, 
  then   the standing wave $e^{i\omega t}
\mathbf{\Phi}_{\alpha,\delta}$ is orbitally stable in
$\mathcal{E}(\mathcal{G})$.
   \item[$(ii)$]  If $n(\bb H_\alpha)-p(\omega)=1$
    in $L^2_k(\mathcal{G})$, then   the standing 
wave $e^{i\omega t}\mathbf{\Phi}_{\alpha,\delta}$  is orbitally unstable in $\mathcal{E}_k(\mathcal{G})$ and, consequently, in     $\mathcal{E}(\mathcal{G})$.
\end{itemize}
\end{theorem}

\begin{remark}\label{nonstab} 
\rm
The instability part of the above 
theorem needs some additional comments.

$(i)$
 It is  known from \cite{GrilSha90}  that when 
 $n(\bb H_\alpha)-p(\omega)$ is odd, we obtain only 
 spectral instability of 
 $e^{i\omega t}\mathbf{\Phi}_{\alpha,\delta}$. 
 To obtain orbital instability due to 
 \cite[Theorem 6.1]{GrilSha90}, it is 
 sufficient to show estimate (6.2) in   
 \cite{GrilSha90} for the semigroup 
  $e^{t\bb A_\alpha} $ generated by 
$ 
\bb A_\alpha=\left(\begin{array}{cc}
 \bb 0& \bb L_{2,\alpha} \\ -\bb L_{1,\alpha}
  & \bb 0 \end{array}\right).
$ 
In our particular case it is not clear how to prove estimate
(6.2).

$(ii)$
When $n(\bb H_\alpha)=2$ 
 (which happens for  $\alpha>0$),
we can apply the results by  Ohta 
\cite[Corollary 3 and 4]{oh} to get the instability 
part of the above Theorem.    
We note that in this case the orbital instability 
follows without using spectral instability.

$(iii)$
 Generally,  to imply the orbital instability 
 from the spectral one, the approach by    
 \cite{HenPer82} can be used (see Theorem 2).  
 The key point of this method is to use the fact that  
 the mapping data-solution associated to the  
 model is  of class $C^2$. In particular, for 
 the NLS-$\delta$ and NLS-$\delta'$ models the 
 mapping data-solution is  of class $C^2$ as 
 $p> 2$ (see Theorem \ref{well0}  and \ref{well1}). 
 The approach by  \cite{HenPer82} have 
 been applied successfully in \cite{AngLop08} 
 and  \cite{AngNat16} for the models of KdV-type.
\end{remark}

Below we  describe  the spectrum of the operators 
$\mathbf{L}_{1,\alpha}$ and $\mathbf{L}_{2,\alpha}$  which will 
help us to verify the conditions of Theorem \ref{stabil_graph}. 
Our ideas are based on the extension theory of symmetric
operators
 and the perturbation theory.  For convenience of the reader 
and for the future references we formulate the following 
extension theory results (see \cite[Chapter IV, \S 14]{Nai67}).
\begin{proposition}\label{d5} (von Neumann decomposition)
Let $A$ be a closed densely defined  symmetric operator. 
Then the following decomposition holds
\begin{equation}\label{d6}
\dom(A^*)=\dom(A)\oplus\mathcal N_{+}(A)\oplus\mathcal N_{-}(A).
\end{equation}
Therefore, for $u\in \dom(A^*)$ such that  $u=f+f_i+f_{-i}$, 
with $f\in \dom(A)$, $f_{\pm i}\in \mathcal{N}_{\pm}(A)$,
we get
$ 
A^*u=Af+if_i-if_{-i}.
$
\end{proposition}

\begin{proposition}\label{semibounded}
Let $A$ be a densely defined lower semi-bounded symmetric
operator (that is, $A\geq mI$) with finite deficiency indices
$n_{\pm}(A)=k<\infty$ in the Hilbert space $\mathcal{H}$, and let
$\widetilde{A}$ be a self-adjoint extension of $A$. Then the
spectrum of $\widetilde{A}$ in $(-\infty, m)$ is discrete and
consists of at most $k$ eigenvalues counting multiplicities.
\end{proposition}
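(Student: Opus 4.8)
The plan is to split the statement into two independent claims and prove each directly: first, that $\widetilde{A}$ has no essential spectrum in $(-\infty,m)$ (which already gives discreteness), and second, that the total multiplicity of the eigenvalues lying below $m$ is at most $k$. Throughout I may assume $A$ is closed, since passing to its closure changes neither the deficiency indices nor the family of self-adjoint extensions. I will also use the von Neumann description of self-adjoint extensions: $\dom(\widetilde{A})$ is the algebraic direct sum of $\dom(A)$ and a $k$-dimensional subspace $\mathcal{M}\subset\dom(A^*)$, with $\widetilde{A}=A^*|_{\dom(\widetilde{A})}$.

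For the discreteness claim, fix $\lambda<m$. On $\dom(A)$ the hypothesis $A\ge mI$ gives $((A-\lambda)\phi,\phi)\ge(m-\lambda)\|\phi\|^2$, whence by Cauchy--Schwarz $\|(A-\lambda)\phi\|\ge(m-\lambda)\|\phi\|$. Thus $A-\lambda$ is injective with closed range, and $\lambda$ is a point of regular type for $A$. Since $A\ge mI$, the connected set $\mathbb{C}\setminus[m,\infty)$ lies in the field of regularity of $A$, on whose connected components the number $\dim\operatorname{Ran}(A-z)^{\perp}$ is constant; as this number equals $n_\pm=k$ at $z=\pm i$, it equals $k$ at $\lambda$ as well. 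Hence $\operatorname{Ran}(A-\lambda)=\ker(A^*-\lambda)^{\perp}$ has codimension $k$. Now $\widetilde{A}-\lambda$ is self-adjoint and extends $A-\lambda$, so its range contains the closed, finite-codimension subspace $\operatorname{Ran}(A-\lambda)$; since a subspace containing a closed subspace of finite codimension is itself closed, $\operatorname{Ran}(\widetilde{A}-\lambda)$ is closed with codimension at most $k$, and by self-adjointness $\ker(\widetilde{A}-\lambda)=\operatorname{Ran}(\widetilde{A}-\lambda)^{\perp}$ is at most $k$-dimensional. Closed range together with a finite-dimensional kernel makes $\widetilde{A}-\lambda$ Fredholm, so $\lambda\notin\sigma_{\mathrm{ess}}(\widetilde{A})$. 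As $\lambda<m$ was arbitrary, $\sigma(\widetilde{A})\cap(-\infty,m)$ consists of isolated eigenvalues of finite multiplicity, i.e.\ it is discrete.

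It remains to bound the total number of these eigenvalues, counted with multiplicity, by $k$. Suppose for contradiction there were $k+1$ orthonormal eigenvectors $\psi_1,\dots,\psi_{k+1}$ of $\widetilde{A}$ with eigenvalues $\mu_1,\dots,\mu_{k+1}<m$, and put $L=\Span\{\psi_1,\dots,\psi_{k+1}\}$, so $\dim L=k+1$. Because $\dim\mathcal{M}=k$, the canonical linear map $L\subset\dom(\widetilde{A})\to\dom(\widetilde{A})/\dom(A)\cong\mathcal{M}$ has nontrivial kernel, producing a nonzero $\psi\in L\cap\dom(A)$. Writing $\psi=\sum c_i\psi_i$ we get on one hand $(\widetilde{A}\psi,\psi)=\sum\mu_i|c_i|^2\le(\max_i\mu_i)\|\psi\|^2<m\|\psi\|^2$, while on the other hand $\psi\in\dom(A)$ forces $(\widetilde{A}\psi,\psi)=(A\psi,\psi)\ge m\|\psi\|^2$, a contradiction. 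Hence at most $k$ eigenvalues lie below $m$, which together with the previous paragraph proves the proposition.

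The main obstacle, and the only ingredient beyond elementary operator theory, is the constancy of $\dim\operatorname{Ran}(A-z)^{\perp}$ on the connected field of regularity $\mathbb{C}\setminus[m,\infty)$: this is precisely what transports the known value $k$ at $z=\pm i$ down to the interval $(-\infty,m)$ and is where semi-boundedness is used essentially. An alternative route would compare $\widetilde{A}$ with the Friedrichs extension $A_F$, whose spectrum lies in $[m,\infty)$: Krein's resolvent formula shows $(\widetilde{A}-z)^{-1}-(A_F-z)^{-1}$ has rank at most $k$, and then invariance of the essential spectrum under finite-rank resolvent perturbations yields discreteness while the rank bound controls the count. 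I prefer the direct argument above, since it is self-contained modulo the standard regular-type facts and avoids constructing $A_F$ or invoking Krein's formula.
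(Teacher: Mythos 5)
Your proof is correct, but note that the paper itself gives no proof of this proposition: it is imported verbatim from Naimark \cite[Chapter IV, \S 14]{Nai67} as a known fact, so the comparison is really between your argument and the classical one. Both halves of your proof check out and are essentially the textbook route. In part (a), the estimate $\|(A-\lambda)\phi\|\geq (m-\lambda)\|\phi\|$ does make every $\lambda<m$ a point of regular type, the slit plane $\mathbb{C}\setminus[m,\infty)$ is connected, and constancy of $\dim\operatorname{Ran}(A-z)^{\perp}$ on connected components of the field of regularity correctly transports $n_{\pm}=k$ to $\lambda$; the finite-codimension argument for closedness of $\operatorname{Ran}(\widetilde{A}-\lambda)$ and the identification $\ker(\widetilde{A}-\lambda)=\operatorname{Ran}(\widetilde{A}-\lambda)^{\perp}$ then legitimately exclude $\lambda$ from $\sigma_{\ess}(\widetilde{A})$. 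In part (b), the dimension count is the standard one; the only step you assert without comment is $\dim\bigl(\dom(\widetilde{A})/\dom(A)\bigr)=k$, which requires injectivity of $I-U$ on the deficiency space, i.e. $\ker(A^*-i)\cap\ker(A^*+i)=\{0\}$ --- immediate, but worth a line. Similarly, your reduction to closed $A$ tacitly uses that the closure keeps the same lower bound $m$ (pass to the limit in $(A\phi_n,\phi_n)\geq m\|\phi_n\|^2$). One structural observation: your two halves can be merged so that the regular-type/Fredholm machinery becomes unnecessary. If the spectral projection $E_{(-\infty,\mu)}(\widetilde{A})$ with $\mu<m$ had rank at least $k+1$, then so would $E_{[-n,\mu)}(\widetilde{A})$ for some finite $n$; vectors in the range of this projection lie in $\dom(\widetilde{A})$ and satisfy $(\widetilde{A}\psi,\psi)\leq\mu\|\psi\|^2<m\|\psi\|^2$, so your quotient argument applied to $k+1$ orthonormal vectors there yields the same contradiction. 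Hence $\operatorname{rank} E_{(-\infty,\mu)}(\widetilde{A})\leq k$ for every $\mu<m$, which gives discreteness and the eigenvalue bound simultaneously. Your alternative sketch via the Friedrichs extension and Krein's resolvent formula is also viable, but the merged form-counting argument is the shortest self-contained route.
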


\begin{remark}
\rm
When $m=0$, Proposition \ref{semibounded} provides an estimate
for $n(\widetilde{A})$.
\end{remark}

 Below, using the perturbation theory we show the equality
$n(\mathbf{L}_{1,\alpha})=2$ in the space $L^2_k(\mathcal{G})$
for any $k\in\{1,..., N-1\}$, i.e.
$n(\mathbf{L}_{1,\alpha}|_{L^2_k(\mathcal{G})})=2$.
For this purpose let us define the following self-adjoint matrix
Schr\"odinger operator on $L^2(\mathcal{G})$ with Kirchhoff
condition at $\nu=0$
\begin{align}
\label{L^0_1} 
&\bb
L_{1,0}=
 \Big (\Big(-\frac{d^2}{dx^2}+\omega-p
 \varphi_0^{p-1}\Big)\delta_{i,j} \Big ),
\\
&\dom(\bb L_{1,0})= \Big \{\mathbf{V}\in H^2(\mathcal{G}):
v_1(0)=...=v_N(0),\,\,\sum\limits_{j=1}^N v_j'(0)=0 \Big \},
\notag
 \end{align}
where $\varphi_0= \Big [\frac{(p+1)\omega}{2}
\sech^2 \Big (\frac{(p-1)\sqrt{\omega}}{2}x
 \Big ) \Big ]^{\frac{1}{p-1}}, \, x>0,$ is the half-soliton for the classical NLS
model \eqref{NLS0}. 

Let $\bb \Phi_0=(\varphi_0)_{j=1}^N$, then it is not
difficult to see that
$ 
\bb \Phi_{\alpha,\delta}\to\bb \Phi_0,
$
as
$
\alpha\to 0,
$ in $ H^1(\mathcal G).$  
The following lemma states the analyticity of the family of
operators $(\mathbf{L}_{1,\alpha})$.
\begin{lemma}\label{analici} As a function of $\alpha$,
$(\mathbf{L}_{1,\alpha})$ is real-analytic family of self-adjoint
operators of type (B) in the sense of Kato.
\end{lemma}

\begin{proof}[\bf Proof.]
By \cite[Theorem VII-4.2]{kato}, it suffices to note that the
family of bilinear forms $(B_{1,\alpha})$ defined for $\bb
U=(u_j)_{j=1}^N,\bb V=(v_j)_{j=1}^N\in \mathcal{E}(\mathcal{G})$
by
\begin{equation*}
B_{1,\alpha}(\bb U,\bb V)=\sum\limits_{j=1}^N \int
_{0}^\infty(u_j'v_j'+\omega
u_jv_j-p(\varphi_{\alpha,\delta})^{p-1}u_jv_j)dx+\alpha
u_1(0)v_1(0), \\
\end{equation*}
  is   real-analytic  of type (B). 
\end{proof}
As we intend to study the negative spectrum of
$\mathbf{L}_{1,\alpha}$ using perturbation theory, we need to
describe spectral properties of $\bb L_{1,0}$ (which is a "limit
value" of $\mathbf{L}_{1,\alpha}$ as $\alpha\to 0$).
  \begin{theorem}\label{spect_L^0_1}
Let $\bb L_{1,0}$ be defined by \eqref{L^0_1} and
$k\in\left\{1,...,N-1\right\}$. Then the following assertions hold.
\begin{itemize}
\item[$(i)$] $\ker(\bb
L_{1,0})=\Span\{\hat{\bb\Phi}_{0,1},...,\hat{\bb\Phi}_{0,
N-1}\}$, where
  \begin{equation*} 
\hat{\bb{\Phi}}_{0,j}=(0,...,0,\underset{\bf
j}{\varphi'_{0}},\underset{\bf j+1}{-\varphi'_{0}},0,...,0).
 \end{equation*}
\item[$(ii)$] In the space $L^2_k(\mathcal{G})$ we have $\ker(\bb
L_{1,0})=\Span\{\mathbf{\widetilde{\Phi}}_{0,k}\}$, where
  \begin{equation}\label{Psi_0}
\mathbf{\widetilde{\Phi}}_{0,k}= \Big (\underset{\bf
1}{\tfrac{N-k}{k}\varphi'_{0}},..., \underset{\bf
k}{\tfrac{N-k}{k}\varphi'_{0}},\underset{\bf
k+1}{-\varphi'_{0}},...,\underset{\bf N}{-\varphi'_{0}} \Big ),
 \end{equation}
i.e.  $\ker(\bb
L_{1,0}|_{L^2_k(\mathcal{G})})=\Span\{\mathbf{\widetilde{\Phi}}_{0,k}\}$.
\item[$(iii)$] The operator $\bb L_{1,0}$ has one simple negative
eigenvalue in $L^2(\mathcal{G})$, i.e. $n(\bb L_{1,0})=1$.
Moreover, $\bb L_{1,0}$ has one simple negative eigenvalue in
$L^2_k(\mathcal{G})$ for any $k$, i.e.  $n(\bb
L_{1,0}|_{L^2_k(\mathcal{G})})=1$.
\item[$(iv)$] The rest of the spectrum of $\bb L_{1,0}$ is
positive and bounded away from zero, and $\sigma_{\emph{ess}}(\bb L_{1,0})=[\omega, \infty).$
  \end{itemize}
\end{theorem}

\begin{proof}[\bf Proof.]
$(i)$ Recall that the only $L^2(\mathbb{R}_+)$-solution to the
equation
$$
 -v''_j+\omega v_j-p\varphi_0^{p-1}v_j=0
 $$ 
 is $v_j=\varphi'_0$ (up to a factor). 
 Thus, any element of $\ker(\mathbf{L}_{1,0})$ 
has the form $\mathbf{V}=(v_j)_{j=1}^N=(c_j\varphi'_0)_{j=1}^N,\,
c_j\in\mathbb{R}$. The continuity
condition is satisfied since $\varphi'_0(0)=0$. Condition
$\sum _{j=1}^Nv_j'(0)=0$ gives rise to $(N-1)$-dimensional
kernel of $\mathbf{L}_{1,0}$. It is easily seen that the
functions $\hat{\bb{\Phi}}_{0,j},\, j=1,..., N-1,$ form basis
there.

$(ii)$
Arguing as in the previous item, we can see that $\ker(\bb
L_{1,0})$ is one-dimensional in $L^2_k(\mathcal{G})$, and it is
spanned on $\mathbf{\widetilde{\Phi}}_{0,k}$.

$(iii)$
The main idea of the proof is to apply Proposition
\ref{semibounded}. In what follows, we use the notation
$\opl_0= \Big (\Big(-\frac{d^2}{dx^2}+\omega-p\varphi_0^{p-1}
\Big)\delta_{k,j} \Big )$.
First, note that $\mathbf{L}_{1,0}$ is the self-adjoint extension
of the following symmetric operator (see Remark \ref{s-a_ext})
\begin{equation}\label{L_0_alpha}
\begin{array}{c}
\mathbf{L}_0=\opl_0,\\
\dom(\mathbf{L}_0)= \Big \{\mathbf{V}\in H^2(\mathcal{G}):
v_1(0)=...=v_N(0)=0, \sum\limits_{j=1}^N v_j'(0)=0  \Big \}.
\end{array}
\end{equation}

Below we show that the operator $\mathbf{L}_0$ is non-negative,
and $n_\pm(\mathbf{L}_0)=1$. Let us show that the adjoint
operator of $\mathbf{L}_0$ is given by
\begin{equation}
\label{adjoint_graph} 
\mathbf{L}_0^*=\opl_0,\quad
\dom(\mathbf{L}_0^*)=\left\{\mathbf{V}\in H^2(\mathcal{G}):
v_1(0)=...=v_N(0)\right\}. 
\end{equation}
Using standard arguments one can prove that
$\dom(\mathbf{L}_0^*)\subset H^2(\mathcal{G})$ and
$\mathbf{L}_0^*=\opl_0$ (see \cite[Chapter V,\S 17]{Nai67}).
Denoting
$$ 
D_0^*:= \{\mathbf{V}\in H^2(\mathcal{G}):
v_1(0)=\cdots=v_N(0) \},
$$
 we easily arrive at $D_0^*\subseteq \dom(\mathbf{L}_0^*)$.
Indeed, for any $\mathbf{U}=(u_j)_{j=1}^N\in D_0^*$ and $\bb
V=(v_j)_{j=1}^N\in \dom(\mathbf{L}_0)$ denoting
$\mathbf{U}^*=\opl_0(\mathbf{U})\in L^2(\mathcal{G})$, we get
\begin{align*} 
 (\mathbf{L}_0\mathbf{V}, \mathbf{U})
 =(\bb V,\opl_0(\mathbf{U}))+\sum\limits_{j=1}^N\left[-v'_j
u_j+v_ju'_j\right]_0^\infty
=(\bb V,\opl_0(\mathbf{U}))=(\bb V,
\bb U^*), 
\end{align*}
which, by definition of the adjoint operator, means that
$\mathbf{U}\in \dom(\mathbf{L}_0^*)$ or $D_0^*\subseteq
\dom(\mathbf{L}_0^*)$.

Let us show the inverse inclusion $D_0^*\supseteq
\dom(\mathbf{L}_0^*)$. Take $\bb U\in\dom(\mathbf{L}_0^*)$, then
for any $\bb V\in \dom(\mathbf{L}_0)$ we have
\begin{align*} 
(\mathbf{L}_0\bb V, \bb U)=(\bb V,
\opl_0(\bb
U))+\sum\limits_{j=1}^N\left[-v'_ju_j+v_ju'_j\right]_0^\infty=(\bb
V, \mathbf{L}_0^*\bb U)=(\bb V, \opl_0(\bb U)).
\end{align*}
Thus, we arrive at the equality 
\begin{equation}\label{adjoint}
\sum\limits_{j=1}^N\left[-v'_ju_j+v_ju'_j\right]_0^\infty=\sum\limits_{j=1}^N
v'_j(0)u_j(0)=0
\end{equation} 
for any $\bb V\in \dom(\mathbf{L}_0)$. 
Let ${\bb W}=(w_j)_{j=1}^N\in \dom(\mathbf{L}_0)$ be such that
$w'_3(0)= w'_4(0)=...= w'_N(0)= 0.$ Then for $\bb U\in
\dom(\mathbf{L}_0^*)$ from \eqref{adjoint} it follows that
\begin{equation}
\label{adjoint1} 
\sum\limits_{j=1}^N w_{j}
'(0)u_j(0)=w'_1(0)u_1(0)+w'_2(0)u_2(0)=0.
\end{equation}
Recalling that $\sum _{j=1}^N w_j'(0)= w_1'(0)+w_2'(0)=0$
and assuming $w_2'(0)\neq 0$, we obtain from \eqref{adjoint1} the
equality $u_1(0)=u_2(0)$.
Repeating the similar arguments for ${\bb W}=({w}_j)_{j=1}^N\in
\dom(\mathbf{L}_0)$ such that ${w}'_4(0)= {w}'_5(0)=...=
{w}'_N(0)=0$, we get $u_1(0)=u_2(0)=u_3(0)$ and so on. Finally
taking ${\bb W}=({w}_j)_{j=1}^N\in \dom(\mathbf{L}_0)$ such that
${w}'_N(0)= 0$, we arrive at $u_1(0)=u_2(0)=...=u_{N-1}(0)$, and
consequently $u_1(0)=u_2(0)=...=u_{N}(0)$.
Thus, $\bb U\in D_0^*$ or $D_0^*\supseteq \dom(\mathbf{L}_0^*)$,
and \eqref{adjoint_graph} holds.

Let us show that the operator $\mathbf{L}_0$ is non-negative.
First, note that every component of the vector
$\mathbf{V}=(v_j)_{j=1}^N\in H^2(\mathcal{G})$ satisfies the
following identity
\begin{equation*} 
-v_j''+\omega v_j-p\varphi_0^{p-1}v_j=
\frac{-1}{\varphi'_0}\frac{d}{dx}
 \Big [(\varphi'_0)^2\frac{d}{dx}
  \Big (\frac{v_j}{\varphi'_0} \Big ) \Big ],\quad
x>0.
\end{equation*}
Using the above equality and integrating by parts, we get for
$\mathbf{V}\in \dom(\mathbf{L}_0)$
 \begin{align*}
(\mathbf{L}_0\mathbf{V},\mathbf{V})&=
\sum\limits_{j=1}^N \int
^{\infty}_{0}(\varphi'_{0})^2
 \Big[\frac{d}{dx} \Big (\frac{v_j}{\varphi'_{0}} \Big ) \Big]^2dx+
\sum\limits_{j=1}^N
 \Big [-v_j'v_j+v_j^2\frac{\varphi''_{0}}{\varphi'_{0}} \Big ]^{\infty}_{0}\\&=\sum\limits_{j=1}^N
\int
^{\infty}_{0}(\varphi'_{0})^2 \Big [\frac{d}{dx}
 \Big (\frac{v_j}{\varphi'_{0}} \Big ) \Big ]^2dx\geq
0.
\end{align*}
Note that the equality 
$
\sum\limits_{j=1}^N \Big [-v_j'v_j+v_j^2\frac{\varphi''_{0}}{\varphi'_{0}} \Big ]^{\infty}_{0}=0
$ follows from the condition $v_j(0)=0$ and the fact
that $x=0$ is the first-order zero for $\varphi'_0(x)$ (i.e. 
$\varphi''_0(0)\neq 0$).

Due to the von  Neumann decomposition \eqref{d6},
\begin{equation*} 
\begin{array}{c}
\dom(\mathbf{L}_0^*)=\left\{\mathbf{V}\in H^2(\mathcal{G}):
v_1(0)=...=v_N(0)\right\}\\
\\ \;\;\;\;\;\;\;\;\;\quad
\quad=\dom(\mathbf{L}_0)\oplus
\Span\{\mathbf{V}_i\}\oplus\Span\{\mathbf{V}_{-i}\},
\end{array}
\end{equation*}
where $\mathbf{V}_{\pm i}= (e^{i\sqrt{\pm
i}x} )_{j=1}^N,\,\,\Im(\sqrt{\pm i})>0$. Indeed, since $\varphi_0\in L^\infty(\mathbb{R_+})$, it follows $
\dom(\mathbf{L}^*_0)=\dom(\mathbf{L}^*)=
\dom(\mathbf{L})\oplus\Span\{\mathbf{V}_i\}\oplus\Span\{\mathbf{V}_{-i}\}$, 
where
\begin{equation}\label{L_symm}\mathbf{L}= \Big (\Big(-\frac{d^2}{dx^2}\Big)\delta_{k,j} \Big ),
\quad \dom(\mathbf{L})=\dom(\mathbf{L}_0),\quad
\mathcal{N}_\pm(\bb L)=\Span\{\mathbf{V}_{\pm i}\}.\end{equation}
Since $n_\pm (\bb L)=1$, by \cite[Chapter IV, Theorem 6]{Nai67},
it follows that $n_{\pm}(\mathbf{L}_0)=1.$

Due to Proposition \ref{semibounded}, $n(\mathbf{L}_{1,0})\leq
1$.
For $\mathbf{\Phi}_0=(\varphi_{0})_{j=1}^N$ we obviously have
\newline
$(\mathbf{L}_{1,0}\mathbf{\Phi}_0,\mathbf{\Phi}_0)=-(p-1)||\mathbf{\Phi}_0||_{p+1}^{p+1}<0$.
By minimax principle, we arrive at $n(\mathbf{L}_{1,0})=1$.
Noting that $\bb \Phi_0\in L^2_k(\mathcal{G})$ for any $k$, we
get $n_{\pm}(\mathbf{L}_0|_{L^2_k(\mathcal{G})})=1.$

$(iv)$
By Weyl's theorem (see \cite[Theorem XIII.14]{RS}), the essential
spectrum of $\mathbf{L}_{1,0}$ coincides with $[\omega,\infty).$
Since $\bb\Phi_0\in L^\infty(\mathcal{G})$, there can be only
finitely many isolated eigenvalues in $(-\infty, \omega')$ for
any $\omega'<\omega$. Then $(iv)$ follows easily.
\end{proof}

\begin{remark}
\rm
Observe that, when we deal with deficiency indices, the operator
$\bb L_0$ is assumed to act on complex-valued functions which
however does not affect the analysis of negative spectrum of $\bb
L_{1,0}$ acting on real-valued functions.
\end{remark}

\begin{remark}
\label{s-a_ext} 
\rm
Let us show that the domain of any self-adjoint extension
$\widehat{\bb L}$ of the operator $\bb L_0$ defined by
\eqref{L_0_alpha}(and acting on complex-valued functions) is given by
$$
\dom(\widehat{\bb L})=
 \Big \{\mathbf{V}\in H^2(\mathcal{G}):
v_1(0)=...=v_N(0),\quad \sum\limits_{j=1}^N
v_j'(0)=zv_1(0),\,z\in\mathbb{R} \Big \}.$$
Indeed, due to \cite[Theorem A.1]{AlbGes05},
$$\dom(\widehat{\bb L})=\left\{\bb F=\bb F_0+c\bb
F_i+ce^{i\theta}\bb F_{-i}:\,\bb F_0\in \dom(\bb L_0),
c\in\mathbb{C},\theta\in[0,2\pi)\right\},$$
where $\bb F_{\pm i}= (\tfrac{i}{\sqrt{\pm i}}e^{i\sqrt{\pm
i}x} )_{j=1}^N,\,\,\Im(\sqrt{\pm i})>0$.
It is easily seen that for $\bb F\in \dom(\widehat{\bb L})$, we
have
$$\sum\limits_{j=1}^N(\bb F)'_j(0)=-Nc(1+e^{i\theta}),\quad (\bb
F)_j(0)=c\left(e^{i\pi/4}+e^{i(\theta-\pi/4)}\right).$$
From the last equalities it follows that  
$$\sum\limits_{j=1}^N(\bb F)'_j(0)=z(\bb F)_1(0),\,\,
\text{where}\,\,
z=\frac{-N(1+e^{i\theta})}{\left(e^{i\pi/4}+e^{i(\theta-\pi/4)}\right)}\in\mathbb{R}.$$
\end{remark}

Combining Lemma \ref{analici} and Theorem \ref{spect_L^0_1}, in
the framework of the perturbation theory we obtain the following
proposition.

\begin{proposition}\label{perteigen} Let
$k\in\left\{1,...,N-1\right\}$. Then there exist $\alpha_0>0$ and
two analytic functions  $\mu : (-\alpha_0,\alpha_0)\to \mathbb
R$ and $\bb F: (-\alpha_0,\alpha_0)\to L^2_k(\mathcal{G})$ such
that
\begin{enumerate}
\item[$(i)$] $\mu(0)=0$ and $\bb
F(0)=\mathbf{\widetilde{\Phi}}_{0,k}$, where
$\mathbf{\widetilde{\Phi}}_{0,k}$ is defined by \eqref{Psi_0}.

\item[$(ii)$] For all $\alpha\in (-\alpha_0,\alpha_0)$,
$\mu(\alpha)$ is the simple isolated second eigenvalue of $\bb
L_{1,\alpha}$ in $L^2_k(\mathcal{G})$, and $\bb F(\alpha)$ is the
associated eigenvector for $\mu(\alpha)$.

\item[$(iii)$] $\alpha_0$ can be chosen small enough to ensure
that for $\alpha\in (-\alpha_0,\alpha_0)$ the spectrum of $\bb
L_{1,\alpha}$ in $L^2_k(\mathcal{G})$ is positive, except at most
the first two eigenvalues.
\end{enumerate}
\end{proposition}

\begin{proof}[\bf Proof.]
Using the spectral structure of the operator $\bb L_{1,0}$ (see
Theorem \ref{spect_L^0_1}), we can separate the spectrum
$\sigma(\bb L_{1,0})$ into two parts $\sigma_0=\{\mu^0_{1,0},
0\}$ and $\sigma_1$ by a closed curve $\Gamma$ (for example, a
circle), such that $\sigma_0$ belongs to the inner domain of
$\Gamma$ and $\sigma_1$ to the outer domain of $\Gamma$ (note
that $\sigma_1\subset (\epsilon, +\infty)$ for $\epsilon>0$).
Next, Lemma \ref{analici} and the analytic perturbations theory
imply that $\Gamma\subset \rho(\bb L_{1,\alpha})$ for
sufficiently small $|\alpha |$, and $\sigma (\bb L_{1,\alpha})$
is likewise separated by $\Gamma$ into two parts, such that the
part of $\sigma (\bb L_{1,\alpha})$ inside $\Gamma$ consists of a
finite number of eigenvalues with total multiplicity (algebraic)
two. Therefore, we obtain from the Kato-Rellich Theorem (see
\cite[Theorem XII.8]{RS}) the existence of two analytic functions
$\mu, \bb F$ defined in a neighborhood of zero such that items
$(i)$, $(ii)$, and $(iii)$ hold.
\end{proof}
Below we investigate how the perturbed second eigenvalue moves
depending on the sign of $\alpha$.
\begin{proposition}\label{signeigen} There exists
$0<\alpha_1<\alpha_0$ such that $\mu(\alpha)>0$ for any
$\alpha\in (-\alpha_1,0)$, and $\mu(\alpha)<0$ for any $\alpha\in
(0, \alpha_1)$. Thus, in the space $ L^2_k(\mathcal G)$ for
$\alpha$ small, we have $n(\bb L_{1,\alpha})=1$ as $\alpha<0$,
and $n(\bb L_{1,\alpha})=2$ as $\alpha>0$.
\end{proposition}

\begin{proof}[\bf Proof.]
 From Taylor's theorem we have the following expansions
\begin{equation}\label{decomp1}
\mu(\alpha)=\mu_0 \alpha+ O(\alpha^2)\quad\text{and}\quad \bb
F(\alpha)=\bb{\widetilde{\Phi}}_{0,k}+ \alpha \bb F_0 + \bb
O(\alpha^2),
\end{equation}
where $\mu_0=\mu'(0)\in \mathbb R$, $\bb F_0=\partial_{\alpha}\bb
F(\alpha)|_{\alpha=0}\in L^2_k(\mathcal{G})$, and $\widetilde{\bb
\Phi}_{0,k}$ is defined by \eqref{Psi_0}. The desired result will
follow if we show that $\mu_0<0$. We compute $(\bb L_{1,\alpha}
\bb F(\alpha), \bb{\widetilde{\Phi}}_{0,k})$ in two different
ways.

In what follows, we will use the following decomposition for $\bb
\Phi_{\alpha,\delta}$ defined by \eqref{Phi_vect}
\begin{equation}\label{1a}
\bb \Phi_{\alpha,\delta}(\alpha)=\bb\Phi_0+\alpha\bb G_0+\bb
O(\alpha^2),\    \bb G_0=\partial_{\alpha}(\bb
\Phi_{\alpha,\delta})|_{\alpha=0}=\tfrac{-2}{(p-1)N\omega}\left(\varphi'_{0}\right)_{j=1}^N.
\end{equation} 
 From \eqref{decomp1} we obtain
\begin{equation}\label{1}
(\bb L_{1,\alpha} \bb F(\alpha),
\bb{\widetilde{\Phi}}_{0,k})=\mu_0
\alpha||\bb{\widetilde{\Phi}}_{0,k}||^2+ O(\alpha^2).
\end{equation}
By $\bb L_{1,0} \bb{\widetilde{\Phi}}_{0,k}=\bb 0$ and
\eqref{decomp1}, we get
\begin{align}
\label{2} 
\bb L_{1,\alpha}\bb{\widetilde{\Phi}}_{0,k}&=p\left((\bb
\Phi_0)^{p-1}-(\bb
\Phi_{\alpha,\delta})^{p-1}\right)\bb{\widetilde{\Phi}}_{0,k}\\
&=-\alpha p(p-1)(\bb \Phi_0)^{p-2}\bb
G_0\bb{\widetilde{\Phi}}_{0,k}+\bb O(\alpha^2).
\notag
\end{align}
 The operations in the last equality are
componentwise.
Equations \eqref{2} and \eqref{1a} induce
\begin{align}
\label{3} 
(\bb L_{1,\alpha} \bb F(\alpha),
\bb{\widetilde{\Phi}}_{0,k})&=-\left(\bb{\widetilde{\Phi}}_{0,k},
\alpha p(p-1)(\bb \Phi_0)^{p-2}\bb
G_0\bb{\widetilde{\Phi}}_{0,k}\right)+O(\alpha^2)\\
&=\tfrac{2\alpha p (N-k)}{k\omega} \int
_0^\infty(\varphi'_0)^3\varphi_0^{p-2}dx+O(\alpha^2).
\notag
\end{align}
Finally, combining \eqref{3} and \eqref{1}, we obtain for
$k\in\{1,..., N-1\}$
$$\mu_0=\frac{2p (N-k)}{k\omega ||\bb{\widetilde{\Phi}}_{0,k}||^2} \int
_0^\infty(\varphi'_0)^3\varphi_0^{p-2}dx+O(\alpha).
$$
It follows that $\mu_0$ is negative for sufficiently small
$|\alpha|$ (due to the negativity of $\varphi'_0$ on
$\mathbb{R}_+$), which in view of \eqref{decomp1} ends the proof.
\end{proof}

Now we can count the number of negative eigenvalues of $\bb
L_{1,\alpha}$ for any $\alpha$
using the classical continuation argument based on the
Riesz projection (see \cite{CozFuk08}) and the extension theory.
\begin{proposition}\label{n(L_1)} Let
$k\in\left\{1,...,N-1\right\}$ and $\alpha\neq 0$. Then
\begin{itemize}
\item[$(i)$] $\ker(\bb
L_{2,\alpha})=\Span\{\bb\Phi_{\alpha,\delta}\}$ and $\bb
L_{2,\alpha}\geq 0$,
\item[$(ii)$] $\ker(\bb L_{1,\alpha})=\{\bb 0\}$,
\item[$(iii)$] for $\alpha>0$, $n(\bb L_{1,\alpha})=2$ in
$L^2_k(\mathcal{G})$, i.e. $n(\bb
L_{1,\alpha}|_{L^2_k(\mathcal{G})})=2$,
\item[$(iv)$] for $\alpha<0$, $n(\bb L_{1,\alpha})=1$ in
$L^2_k(\mathcal{G})$, i.e. $n(\bb
L_{1,\alpha}|_{L^2_k(\mathcal{G})})=1$, moreover, $n(\bb
L_{1,\alpha})=1$ in $L^2(\mathcal{G})$.
\end{itemize} 
\end{proposition}

\begin{proof}[\bf Proof.] 
Assertions 
$(i)$-$(ii)$ were proved in \cite[Proposition 6.1]{AdaNoj14}.

$(iii)$
Recall that $\ker(\bb L_{1,\alpha})=\{\bb 0\}$ for $\alpha\neq
0$. Define $\alpha_\infty$ by

$$ 
\alpha_\infty=\inf \{\tilde \alpha>0: \bb
L_{1,\alpha}\;{\text{has  two negative eigenvalues}} 
{\text{ for all}}\; \alpha \in (0,\tilde \alpha)\}.
$$
Proposition \ref{signeigen} implies that $\alpha_\infty$ is well-defined and $\alpha_\infty\in (0,\infty]$. We claim that
$\alpha_\infty=\infty$. Suppose that $\alpha_\infty< \infty$. Let
$M=n(\bb L_{1,\alpha_{\infty}})$ and $\Gamma$ be a closed curve
(for example, a circle or a rectangle) such that $0\in
\Gamma\subset \rho(\bb L_{1,\alpha_{\infty}})$, and all the
negative eigenvalues of $\bb L_{1,\alpha_{\infty}}$ belong to the
inner domain of $\Gamma$. The existence of such $\Gamma$ can be
deduced from the lower semi-boundedness of the quadratic form
associated to $\bb L_{1,\alpha_{\infty}}$.
 
Next, from Lemma \ref{analici} it follows that there is
$\epsilon>0$ such that for $\alpha\in [\alpha_{\infty}-\epsilon,
\alpha_{\infty}+\epsilon]$ we have $\Gamma\subset \rho(\bb
L_{1,\alpha})$ and for $\xi \in \Gamma$,
$\alpha\to (\bb L_{1,\alpha}-\xi)^{-1}$ is analytic. Therefore,
the existence of an analytic family of Riesz projections
$\alpha\to P(\alpha)$ given by
$$
P(\alpha)=-\frac{1}{2\pi i} \int _{\Gamma} (\bb
L_{1,\alpha}-\xi)^{-1}d\xi
$$
implies that $\dim(\ran P(\alpha))=\dim(\ran P(\alpha_\infty))=M$
for all $\alpha\in [\alpha_\infty-\epsilon,
\alpha_{\infty}+\epsilon]$. Next, by definition of
$\alpha_\infty$, $\bb L_{1,\alpha_{\infty}-\epsilon} $ has two
negative eigenvalues, and $M=2$, hence $\bb L_{1,\alpha}$ has two
negative eigenvalues for $\alpha\in
(0,\alpha_{\infty}+\epsilon]$, which contradicts with the
definition of $\alpha_{\infty}$. Therefore,
$\alpha_{\infty}=\infty$.

$(iv)$
Analogously we can prove that $n(\bb L_{1,\alpha})=1$ in
$L^2_k(\mathcal{G})$ in the case $\alpha<0$.
To show the equality in the whole space $L^2(\mathcal{G})$, we
need to repeat the arguments of the proof of Theorem
\ref{spect_L^0_1}-$(iii)$ (i.e. $\bb L_{1,0}$ has to be replaced
by $\bb L_{1,\alpha}$, and $\bb \Phi_0$ by $\bb
\Phi_{\alpha,\delta}$). Namely, $\bb L_{1,\alpha}$ has to be
considered as the self-adjoint extension of the non-negative
symmetric operator
\begin{equation*}
\begin{array}{c}
\bb
L_{\alpha}= \Big (\Big(-\frac{d^2}{dx^2}+\omega-p(\varphi_{\alpha,\delta})^{p-1}\Big)\delta_{k,j} \Big ),\qquad\qquad\qquad\quad\\\dom(\mathbf{L}_\alpha)= \Big \{\mathbf{V}\in
H^2(\mathcal{G}): v_1(0)=...=v_N(0)=0,\quad \sum\limits_{j=1}^N
v_j'(0)=0  \Big \},
\end{array}
\end{equation*}
with deficiency indices $n_\pm(\bb L_{\alpha})=1$. Note that
since $\alpha<0$, we have $\varphi'_{\alpha,\delta}(x)<0,\, x\geq
0$.
\end{proof}

\begin{remark}\label{Rem_alpha_pos}
\rm

$(i)$
Using instruments of the
extension theory, it can be shown that \\ $n(\bb L_{1,\alpha})\leq
N$ in $L^2(\mathcal{G})$.

$(ii)$ Note that by Weyl's theorem (see \cite[Theorem
XIII.14]{RS}) the rest of the spectrum of $\bb L_{1,\alpha}$ and
$\bb L_{2,\alpha}$ in $L^2(\mathcal{G})$ is positive and bounded
away from zero, moreover, $\sigma_{\mathrm{ess}}(\bb L_{1,\alpha})=\sigma_{\mathrm{ess}}(\bb L_{2,\alpha})=[\omega,\infty).$
\end{remark}  

To apply  Theorem \ref{stabil_graph}, we need
to study the sign of
$\partial_\omega||\mathbf{\Phi}_{\alpha,\delta}||^2$.
\begin{proposition}\label{slope_graph_alpha}
Let $\omega> \tfrac{\alpha^2}{N^2}$ and
$J(\omega)=\partial_\omega||\mathbf{\Phi}_{\alpha,\delta}||^2$.
Then the following assertions hold.
\begin{itemize}
\item[$(i)$] Let $\alpha<0$, then

$1)$  for   $1<p\leq 5$,  we have $J(\omega)>0$;
 
$2)$ for $p>5$, there exists $\omega_1$ such that
$J(\omega_1)=0$, and $J(\omega)>0$ for
$\omega\in\left(\frac{\alpha^2}{N^2},\omega_1\right)$, while
$J(\omega)<0$ for $\omega\in (\omega_1,\infty)$.
\item[$(ii)$] Let $\alpha>0$, then

$1)$  for   $1<p\leq 3$,  we have $J(\omega)>0$;
 
$2)$ for $3<p<5$, there exists $\omega_2$ such that
$J(\omega_2)=0$, and $J(\omega)<0$ for
$\omega\in\left(\frac{\alpha^2}{N^2},\omega_2\right)$, while
$J(\omega)>0$ for $\omega\in (\omega_2,\infty)$;

 $3)$  for $p\geq 5$, we have $J(\omega)<0$.
\end{itemize} 
\end{proposition}

\begin{proof}[\bf Proof.]

To prove all the assertions, we will use the equality (see
\cite{AdaNoj14})
 \begin{equation*} 
 J(\omega)=C\omega^{\tfrac {7-3p}{2(p-1)}}J_1(\omega),
 \end{equation*}
where $C=\tfrac{N}{p-1} (\frac{p+1}{2} )^{\tfrac
2{p-1}}>0$ and
$$J_1(\omega)= \tfrac{5-p}{p-1} \int _{\tfrac{-\alpha}
{N\sqrt{\omega}}}^1(1-t^2)^{\tfrac
{3-p}{p-1}}dt+\tfrac{-\alpha}{N\sqrt{\omega}}(1-\tfrac{\alpha^2}{N^2\omega})^{\tfrac{3-p}{p-1}}.
$$
Thus,
\begin{equation*} 
J'_1(\omega)=\tfrac{-\alpha}{N\omega^{3/2}}\tfrac{3-p}{p-1} \Big [ \Big (1-\tfrac{\alpha^2}{N^2\omega} \Big )^{\tfrac{3-p}{p-1}}+\tfrac{\alpha^2}{N^2\omega} \Big (1-\tfrac{\alpha^2}{N^2\omega} \Big )^{-\tfrac{2(p-2)}{p-1}} \Big ].
\end{equation*}
Item $(i)$ was proved in \cite{AdaNoj14}. 

Let us prove the assertion $(ii)$. Item $3)$ is immediate.
Consider $p\in (1,5)$.
It is easily seen that
 \begin{equation}\label{slo4}
a_0= \lim_{\omega\to
+\infty}J_1(\omega)=\frac{5-p}{p-1}\int_0^{1}(1-t^2)^{\tfrac
{3-p}{p-1}}dt>0,
 \end{equation}
and 
\begin{equation}\label{slo5} 
\lim_{\omega\to \frac{\alpha^2}{N^2}}J_1(\omega)=\left\{
                     \begin{array}{ll}
                       2a_0, & \hbox{$p\in (1, 3]$,} \\
                       -\infty, & \hbox{$p\in (3,5)$.}
                     \end{array} \right.
 \end{equation}
Observing that $J'_1(\omega)\leq 0$ for $p\in (1, 3]$
($J'_1(\omega)\equiv0$ as $p=3$) and using
\eqref{slo4}-\eqref{slo5}, we get $J(\omega)>0$.
Let $p\in(3,5)$, then $J'_1(\omega)>0$. Thus, from
\eqref{slo4}-\eqref{slo5} it follows that there exists unique
$\omega_2>\tfrac{\alpha^2}{N^2}$ such that
$J_1(\omega_2)=J(\omega_2)=0$, and $J(\omega)<0$ for
$\omega\in (\frac{\alpha^2}{N^2},\omega_2 )$, while
$J(\omega)>0$ for $\omega\in (\omega_2,\infty)$.
 \end{proof}
 
\noindent\textbf{Proof of Theorem \ref{main_delta}.}
From Theorem \ref{well0}, we obtain well-posedness of
\eqref{NLS_graph} in $\mathcal{E}_k(\mathcal{G})$ for any
$k\in\{1,..., N-1\}$. For $\alpha>0$, from Proposition
\ref{n(L_1)}-$(iii)$ and Proposition \ref{slope_graph_alpha}
-$(ii)$, we obtain $
n(\bb H_\alpha |_{L^2_k(\mathcal{G})})-p(\omega)=1$ as $p\in (1,
3]$, $\omega>\frac{\alpha^2}{N^2}$, and $p\in (3,5)$,
$\omega>\omega_2$. Thus, from Theorem \ref{stabil_graph} we get
orbital instability of $e^{i\omega t}\bb \Phi_{\alpha, \delta}$
in $\mathcal{E}_k(\mathcal{G})$ and consequently  in   $\mathcal{E}(\mathcal{G})$.
 \qed

\begin{remark}\label{alpha_neg}
\rm
$(i)$
Let $p\geq 5$ and $\alpha>0$, then by Proposition
\ref{slope_graph_alpha}-$(ii)$ and Proposition
\ref{n(L_1)}-$(iii)$ we get $
n(\bb H_\alpha |_{L^2_k(\mathcal{G})})-p(\omega)=2$. This means
that Theorem \ref{stabil_graph} does not provide any information
about stability properties of $e^{i\omega t}\bb \Phi_{\alpha,
\delta}$ in $\mathcal{E}_k(\mathcal{G})$.

$(ii)$
Since the mapping data-solution is of class $C^2$ for $p> 2$, we
can apply the approach by \cite{HenPer82}, to imply the orbital
instability from the spectral one for $p\in (2,5)$.

$(iii)$
Theorem \ref{st_graph} above initially established in
\cite{AdaNoj14} easily follows for any $\alpha<0$ from our
approach. Indeed, combining Theorem \ref{well0}, Proposition
\ref{n(L_1)}-$(i)$-$(ii)$-$(iv)$, Proposition
\ref{slope_graph_alpha}-$(i)$ and Theorem \ref{stabil_graph} we
get the orbital stability of $e^{i\omega t}\bb \Phi_{\alpha,
\delta}$ in $\mathcal{E}(\mathcal{G})$ for $1<p\leq 5$. Moreover,
applying the approach by \cite{HenPer82} we may deduce the
orbital instability of $e^{i\omega t}\bb \Phi_{\alpha, \delta}$
from the spectral one for $p>5$ and $\omega>\omega_1$ (see
\cite[Remark 6.1]{AdaNoj14} where $\omega_1$ is denoted by $\omega^*$).

 \end{remark}

\subsection{The NLS-$\delta'$ equation on a star
graph}\label{NLS_delta'_graph}
 
As it was announced in the Introduction, in this Subsection we
discuss a new problem. In particular, we study the orbital
stability of the standing wave $\mathbf{U}(t,x)=e^{i\omega
t}\mathbf{\Phi}(x)$ of NLS-$\delta'$ equation \eqref{NLS_graph'}
with the particular $N$-tail profile
$\mathbf{\Phi}_{\lambda,\delta'}=(\varphi_{\lambda,j})_{j=1}^N$
satisfying the stationary equation
\begin{equation*}\label{H_alpha'}
\mathbf{H}_\lambda^{\delta'}\mathbf{\Phi}+\omega\mathbf{\Phi}-|\mathbf{\Phi}|^{p-1}\mathbf{\Phi}=0
\end{equation*}
under the conditions
$\varphi_{\lambda,1}=...=\varphi_{\lambda,N}=:\varphi_{\lambda,\delta'}$
and $N\varphi_{\lambda, j}(0)=\lambda \varphi'_{\lambda, j}(0)$.
It is easily seen that $\mathbf{\Phi}_{\lambda,\delta'}$ is
defined by \eqref{varphi_lam} for $\lambda<0$ and
$\omega>\tfrac{N^2}{\lambda^2}$.

As we are investigating orbital stability in $H^1(\mathcal{G})$
we need to show the well-posedness of the initial value problem
for equation \eqref{NLS_graph'} in this space (\textit{Assumption
2} in \cite{GrilSha90}). First, we establish the following
property for the unitary group associated to \eqref{NLS_graph'}.
\begin{lemma}\label{rela0} Let 
$\{e^{-it\bb H_\lambda^{\delta'}}\}_{t\in \mathbb R}$ be the
family of unitary operators associated to NLS-$\delta'$ model
\eqref{NLS_graph'}. Then for every $\bb V\in H^1(\mathcal{G})$
we have
$$ 
\partial_x(e^{-it\bb H_\lambda^{\delta'} }\bb V)=-e^{-it\bb
H_\lambda^{\delta'} }\bb V' +
\mathcal B(\bb V'),
$$
where $\mathcal B(\bb
V')=(2e^{it\partial^2_x}\tilde{v}_j)_{j=1}^N$, with $
\tilde{v}_j(x)=\left\{\begin{array}{c}
v'_j(x),\,\ x\geq 0,\\
0,\quad\;\;\;x<0
\end{array}\right.$, and $e^{it\partial^2_x}$ is the unitary
group associated with the free Schr\"odinger operator on
$\mathbb{R}$.
\end{lemma}
\begin{proof}[\bf Proof.]
The proof repeats the one of Lemma \ref{rela00}. The only
difference is that $\delta'$-interaction on $\mathcal{G}$ is
induced by the following condition

$$
\bb V\in \bb D_{\lambda,\delta'}\quad \text{iff}\quad
\tilde{A}\bb V(0)+\tilde{B}\bb V'(0)=\bb 0,\,\,\,\text{where}$$
\small
 $$
 \tilde{A}=\left(\begin{array}{ccccc}
  0& & ... & &0\\
  0&  & & & 0\\
  \vdots & & &  &\vdots\\
  & & & & \\
  -1 & &...& &-1
  \end{array}\right), \quad \tilde{B}=\left(\begin{array}{ccccc}
  1&-1&0&...&0\\
  0&1&-1&...&0\\
  \vdots &\vdots&\vdots&  &\vdots\\
  0&0&0&...&-1\\
\tfrac{\lambda}{N}&\tfrac{\lambda}{N}&\tfrac{\lambda}{N}&...&\tfrac{\lambda}{N}
  \end{array}\right).
  $$
  \normalsize
\end{proof}  
  
 \begin{theorem}\label{well1} 
Let $p>1$. Then for any $\bb U_0\in
H^1(\mathcal{G})$ there exists $T > 0$ such that 
equation \eqref{NLS_graph} has a unique solution $\bb U \in C
([-T,T],
H^1(\mathcal{G}))
\cap C^1 ([-T,T], [H^{1}(\mathcal{G})]')$ satisfying $\bb U(0)=\bb
U_0$. For each $T_0\in (0, T)$ the mapping
$ 
\bb U_0\in H^1(\mathcal{G})\to \bb U \in C ([-T_0,T_0],
H^1(\mathcal{G}))
$
is continuous. In particular, for $p>2$ this mapping is at least
of class $C^2$.

Moreover, the conservation of energy and charge holds:

$ 
E_\lambda(\bb U(t)) = E_\lambda(\bb U_0),\,\,\, \text{and}\,\,\,
Q(\bb U(t))=||\bb U(t)||^2=||\bb U_0||^2,\,\,t\in [-T, T],
$ 
where the energy $E_\lambda$ is defined for $\bb
V=(v_j)_{j=1}^N\in H^1(\mathcal{G})$ by
 \begin{equation*}\label{energy_delta'}
E_\lambda(\bb V)=\tfrac 1{2}||\bb V'||^2 -\tfrac 1{p+1}||\bb
V||_{p+1}^{p+1}+\tfrac{1}{2\lambda} \Big |\sum\limits_{j=1}^Nv_j(0) \Big |^2.
\end{equation*}
Consequently, for $1<p<5$, we can choose $T=+\infty$.
\end{theorem}

\begin{proof}[\bf Proof.]
The prove repeats the one of Theorem \ref{well0}. In particular,
it essentially uses \\ Lemma~\ref{rela0} and the Banach contraction
theorem.
\end{proof}

\begin{remark}
\rm
Analogously to the case of NLS-$\delta$ equation the following
equality holds
$
e^{-it\bb H_{\delta'}^\lambda} \bb V= e^{-it\bb
H_{\delta'}^\lambda}\bb P_c \bb V+ e^{-it\bb
H_{\delta'}^\lambda}\bb P_p \bb V.
$
Similarly, for $\lambda>0$, we have $\sigma_c(\bb
H_{\delta'}^\lambda)=[0,\infty)$ and $\sigma_p(\bb
H_{\delta'}^\lambda)=\emptyset$, therefore $\bb P_p=\bb 0$. For $\lambda<0$, $\sigma_c(\bb H_{\delta'}^\lambda)=[0,\infty)$
and $\sigma_p(\bb
H_{\delta'}^\lambda)=\{-z_0^2\}=\{-\frac{N^2}{\lambda^2}\}$, where
the corresponding eigenfunction is $\bb
V_{z_0}(x)=(e^{\frac{N}{\lambda}x})_{j=1}^N$, and therefore
$e^{-it\bb H_{\delta'}^\lambda}\bb P_p \bb V=e^{itz_0^2}(\bb
V,\bb V_{z_0})\bb V_{z_0}$.

The proof of the spectral properties of $\bb H_{\delta'}^\lambda$
repeats the one of \cite[Theorem 4.3]{AlbGes05} for the case of
the Schr\"odinger operator with $\delta'$-interaction on the
line. In particular, to describe the point spectrum for
$\lambda<0$ one needs to consider $\bb H_{\delta'}^\lambda$ as
the self-adjoint extension of the symmetric non-negative operator
$\bb L'$ defined by \eqref{L'_symm} with deficiency indices
$n_\pm(\bb L')=1$ and then to apply Proposition
\ref{semibounded}.
\end{remark}

Consider the following two self-adjoint matrix operators 
\begin{align*}
\label{linear_lambda'} 
\mathbf{L}_{1,\lambda}&= \Big (\Big(-\frac{d^2}{dx^2}+\omega-p(\varphi_{\lambda,\delta'})^{p-1}\Big)\delta_{k,j} \Big ),\\
\mathbf{L}_{2,\lambda}&= \Big (\Big(-\frac{d^2}{dx^2}+\omega-(\varphi_{\lambda,\delta'})^{p-1}\Big)\delta_{k,j} \Big ),
\end{align*}
with $
\dom(\mathbf{L}_{1,\lambda})=\dom(\mathbf{L}_{2,\lambda})=\bb
D_{\lambda,\delta'}$. Here $\delta_{k,j}$ is the 
Kronecker symbol.
These operators are associated in a standard way with the second
derivative of the following action functional
\begin{equation*}
\bb S_\lambda(\bb V)=\tfrac 1{2}||\bb V'||^2 -\tfrac 1{p+1}||\bb
V||_{p+1}^{p+1}+\tfrac{1}{2\lambda} \Big |\sum\limits_{j=1}^Nv_j(0) \Big |^2
+\tfrac{\omega}{2}||\bb V||^2,
\end{equation*}
where $\bb V=(v_j)_{j=1}^N\in H^1(\mathcal{G})$. 
Namely, $(\bb S_\lambda)''(\mathbf{\Phi}_{\lambda,\delta'})(\mathbf{U},
\mathbf{V})=(\mathbf{L}_{1,\lambda}\mathbf{U}_1,
\mathbf{V}_1)+(\mathbf{L}_{2,\lambda}\mathbf{U}_2, \mathbf{V}_2)$ with
$\mathbf{U}=\mathbf{U}_1+i\mathbf{U}_2$ and
$\mathbf{V}=\mathbf{V}_1+i\mathbf{V}_2$.
As in the previous paragraph, we consider the form $(\bb
S_\lambda)''(\mathbf{\Phi}_{\lambda,\delta'})$ as a linear
operator \begin{equation}\label{form_L1+L2'}\bb
H_\lambda=\left(\begin{array}{cc} \bb L_{1,\lambda}& \bb 0 \\ \bb
0 & \bb L_{2,\lambda} \end{array}\right).\end{equation}

The energy functional $E_\lambda$ defined by \eqref{energy_delta'} belongs to $C^2(H^1(\mathcal{G}),\mathbb{R})$  and \textit{Assumptions 1,2} in \cite{GrilSha87}  are satisfied. Thus, the analog  of  stability/instability  Theorem \ref{stabil_graph} is true for $e^{i\omega t}\mathbf{\Phi}_{\lambda,\delta'}$.

Below we give the description of the spectrum of the operators $\mathbf{L}_{1,\lambda}$ and $\mathbf{L}_{2,\lambda}$, which due to formula \eqref{form_L1+L2'}, will help us to verify the conditions of mentioned stability/instability result.
\begin{proposition}\label{spec_graph'}
Let $\lambda<0$ and $\omega >\tfrac{N^2}{\lambda^2}$, then the following results hold.
\begin{itemize}
\item[$(i)$]
$\ker(\mathbf{L}_{2,\lambda})=\Span\{\mathbf{\Phi}_{\lambda,\delta'}\}$,
and $\mathbf{L}_{2,\lambda}\geq 0$.
\item[$(ii)$] If $\omega <
\tfrac{N^2}{\lambda^2}\tfrac{p+1}{p-1}$, then
$\ker(\mathbf{L}_{1,\lambda})=\{\mathbf{0}\}$, and
$n(\mathbf{L}_{1,\lambda})=1$.
\item[$(iii)$] If $\omega =
\tfrac{N^2}{\lambda^2}\tfrac{p+1}{p-1}$, then
$n(\mathbf{L}_{1,\lambda})=1$, and the kernel of
$\mathbf{L}_{1,\lambda}$ is given by
$\ker(\mathbf{L}_{1,\lambda})=\Span\{\hat{\mathbf{\Phi}}_{\lambda,1},..,
\hat{\mathbf{\Phi}}_{\lambda,N-1}\}$, where
\begin{equation}\label{Psi}
\hat{\mathbf{\Phi}}_{\lambda,j}=(0,..,0,\underset{\bf
j}{\varphi'_{\lambda,\delta'}},\underset{\bf
j+1}{-\varphi'_{\lambda,\delta'}},0,..,0).
 \end{equation}
\item[$(iv)$] If $\omega >
\tfrac{N^2}{\lambda^2}\tfrac{p+1}{p-1}$, then
$\ker(\mathbf{L}_{1,\lambda})=\{\mathbf{0}\}$, and
$n(\mathbf{L}_{1,\lambda})\leq N$. Moreover, for $N$ even in the
space $L^2_{\frac{N}{2}}(\mathcal{G})$ we have
$n(\mathbf{L}_{1,\lambda}|_{L^2_{\frac{N}{2}}(\mathcal{G})})=2$.
\item[$(v)$] The rest of the spectrum of $\mathbf{L}_{1,\lambda}$
and $\mathbf{L}_{2,\lambda}$ is positive and bounded away from
zero, moreover, $\sigma_{\mathrm{ess}}(\bb L_{1,\lambda})=\sigma_{\mathrm{ess}}(\bb L_{2,\lambda})=[\omega,\infty).$
\end{itemize}
\end{proposition}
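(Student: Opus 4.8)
The plan is to exploit the fact that every edge carries the \emph{same} scalar profile $\varphi_0^\lambda$, so that each matrix operator $\mathbf{L}^\lambda_{i,\omega}$ acts as one fixed scalar Schr\"odinger operator $\ell_i=-\frac{d^2}{dx^2}+\omega-c_i(\varphi_0^\lambda)^{p-1}$ (with $c_1=p$, $c_2=1$) tensored with the identity in the edge index $j\in\{1,\dots,N\}$. First I would introduce an orthogonal transformation $U$ acting only in the index space $\mathbb{C}^N$, with first basis vector $\tfrac1{\sqrt N}(1,\dots,1)$ and the remaining vectors spanning its orthogonal complement. Since $\mathbf{L}^\lambda_{i,\omega}$ commutes with $U$, the problem decouples: tracing the conditions defining $D_\lambda$ through $U$ shows that in the symmetric direction the (equal) derivatives survive and the constraint $\sum_j v_j(0)=\lambda v_1'(0)$ becomes a Robin condition $u'(0)=\tfrac N\lambda u(0)$, while in each of the $N-1$ orthogonal directions the derivative part vanishes, i.e. a Neumann condition $u'(0)=0$. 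Thus I expect
\[
U\,\mathbf{L}^\lambda_{i,\omega}\,U^{*}=\ell_i^{\mathrm{Rob}}\oplus\underbrace{\ell_i^{\mathrm{Neu}}\oplus\dots\oplus\ell_i^{\mathrm{Neu}}}_{N-1},
\]
reducing every spectral count to two scalar half-line problems.

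The second step is the scalar analysis through the factorization identity already used in Propositions \ref{num} and \ref{neg_delta'}. For $\ell_2$ the factorizing solution is $\varphi_0^\lambda>0$; since the defining relation $N\varphi_0^\lambda(0)=\lambda(\varphi_0^\lambda)'(0)$ gives $\tfrac{(\varphi_0^\lambda)'(0)}{\varphi_0^\lambda(0)}=\tfrac N\lambda$, integration by parts produces a boundary coefficient that vanishes in the Robin channel and equals $-\tfrac N\lambda>0$ in each Neumann channel, whence $\mathbf{L}^\lambda_{2,\omega}\ge0$, with kernel exactly the symmetric copy of $\varphi_0^\lambda$, i.e. $\Span\{\mathbf{\Phi}_0^\lambda\}$; this is assertion $(i)$. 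For $\ell_1$ the factorizing solution is $\psi:=(\varphi_0^\lambda)'$, which I would first check is nonvanishing on $[0,\infty)$ (the peak of the $\sech^2$ profile sits at a negative abscissa when $\lambda<0$, so $\varphi_0^\lambda$ is strictly monotone on the half-line). The decisive quantity is $\tfrac{\psi'(0)}{\psi(0)}=\tfrac{(\varphi_0^\lambda)''(0)}{(\varphi_0^\lambda)'(0)}$, which I would compute from the stationary equation together with $(\varphi_0^\lambda(0))^{p-1}=\tfrac{(p+1)\omega}2\bigl(1-\tfrac{N^2}{\lambda^2\omega}\bigr)$, obtaining $\tfrac{(\varphi_0^\lambda)''(0)}{(\varphi_0^\lambda)'(0)}=\tfrac{\lambda}{2N}\bigl(\tfrac{(p+1)N^2}{\lambda^2}-(p-1)\omega\bigr)$, whose sign changes exactly at the threshold $\omega^\ast:=\tfrac{N^2}{\lambda^2}\tfrac{p+1}{p-1}$.

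With these coefficients in hand the counting follows. For the upper bounds I would invoke Proposition \ref{semibounded}: the scalar minimal operator on $[0,\infty)$ with $u(0)=u'(0)=0$ is non-negative (no boundary term in the factorized form) with deficiency index $1$, so each of $\ell_1^{\mathrm{Rob}},\ell_1^{\mathrm{Neu}}$ has at most one negative eigenvalue, and summing yields $n(\mathbf{L}^\lambda_{1,\omega})\le N$, the first part of $(iv)$. In the Robin channel the test function $\psi$ gives $(\ell_1^{\mathrm{Rob}}\psi,\psi)=\psi(0)^2\bigl(\tfrac N\lambda-\tfrac{(\varphi_0^\lambda)''(0)}{(\varphi_0^\lambda)'(0)}\bigr)$, and a short simplification gives $\tfrac N\lambda-\tfrac{(\varphi_0^\lambda)''(0)}{(\varphi_0^\lambda)'(0)}=\tfrac{(p-1)(\lambda^2\omega-N^2)}{2\lambda N}<0$ for all $\omega>\tfrac{N^2}{\lambda^2}$, so by Proposition \ref{AlbNizh} $n(\ell_1^{\mathrm{Rob}})=1$ throughout. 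In the Neumann channel the factorized form reads $\int_0^\infty\psi^2\bigl(\tfrac{d}{dx}(u/\psi)\bigr)^2dx-\tfrac{(\varphi_0^\lambda)''(0)}{(\varphi_0^\lambda)'(0)}u(0)^2$, which is non-negative for $\omega\le\omega^\ast$ (so $n(\ell_1^{\mathrm{Neu}})=0$), while for $\omega>\omega^\ast$ the value $(\ell_1^{\mathrm{Neu}}\psi,\psi)=-(\varphi_0^\lambda)''(0)(\varphi_0^\lambda)'(0)<0$ forces $n(\ell_1^{\mathrm{Neu}})=1$. Hence $n(\mathbf{L}^\lambda_{1,\omega})=1$ in cases $(ii)$ and $(iii)$. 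The kernels drop out of the same computation: $\psi$ lies in $\ker\ell_1^{\mathrm{Neu}}$ precisely when $(\varphi_0^\lambda)''(0)=0$, i.e. exactly at $\omega=\omega^\ast$, producing the $(N-1)$-dimensional $\Span\{\mathbf{\Psi}_1^\lambda,\dots,\mathbf{\Psi}_{N-1}^\lambda\}$ of $(iii)$, whereas away from $\omega^\ast$ neither channel admits $\psi$, giving $\ker(\mathbf{L}^\lambda_{1,\omega})=\{\mathbf0\}$. For the even subspace in $(iv)$ I would note that $H^1_{\ev}(\mathcal G)$ carries only the two index directions $(1,\dots,1)$ and $(1,\dots,1,-1,\dots,-1)$, so $U$ restricts $\mathbf{L}^\lambda_{1,\omega}$ to $\ell_1^{\mathrm{Rob}}\oplus\ell_1^{\mathrm{Neu}}$ and $n=1+1=2$ for $\omega>\omega^\ast$. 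Finally $(v)$ is Weyl's theorem applied channelwise: since $(\varphi_0^\lambda)^{p-1}\to0$ at infinity, $\sigma_{\ess}=[\omega,\infty)$ with $\omega>0$, so only finitely many eigenvalues lie below and the remainder of the spectrum is positive and bounded away from zero.

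The main obstacle I anticipate is making the reduction rigorous — verifying that $U$ maps $D_\lambda$ precisely onto the stated direct sum of one Robin and $N-1$ Neumann domains, and that $(\varphi_0^\lambda)'$ is genuinely nonvanishing on the \emph{closed} half-line — together with the boundary-coefficient computation, since it is the sign of $\tfrac N\lambda-\tfrac{(\varphi_0^\lambda)''(0)}{(\varphi_0^\lambda)'(0)}$ and of $(\varphi_0^\lambda)''(0)$ that encodes the entire case distinction and pins down the threshold $\omega^\ast$.
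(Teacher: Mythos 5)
Your proposal is correct, and it reaches the proposition by a genuinely different route than the paper. The paper never diagonalizes: it works with the matrix operator on the graph itself, introducing in each regime a symmetric restriction of $\mathbf{L}^\lambda_{1,\omega}$ (the operator $\mathbf{L}'_0$ with $v_1'(0)=\cdots=v_N'(0)=0$ and $\sum_j v_j(0)=0$ for items $(ii)$--$(iii)$, the operator $\mathbf{L}_{\min}$ with all boundary data zero for item $(iv)$), proving nonnegativity through the edgewise factorization identity, computing deficiency indices ($1$, $N$, or $2$ in the even subspace) via the Neumann formula, and then invoking Propositions \ref{semibounded} and \ref{AlbNizh} at the level of the full graph operator; the kernels are found by the same ODE matching you use. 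Your symmetry reduction is sound: the conditions in \eqref{H_lambda} say precisely that $\mathbf{V}'(0)$ is parallel to $(1,\dots,1)$, so the $N-1$ orthogonal channels inherit Neumann conditions while the symmetric channel inherits $u'(0)=\tfrac{N}{\lambda}u(0)$, and your two boundary coefficients are exactly right: $\tfrac{(\varphi_0^\lambda)''(0)}{(\varphi_0^\lambda)'(0)}=\tfrac{\lambda}{2N}\bigl(\tfrac{(p+1)N^2}{\lambda^2}-(p-1)\omega\bigr)$, which reproduces channelwise the boundary term in the paper's nonnegativity computation for $\mathbf{L}'_0$, and $\tfrac{N}{\lambda}-\tfrac{(\varphi_0^\lambda)''(0)}{(\varphi_0^\lambda)'(0)}=\tfrac{(p-1)(\lambda^2\omega-N^2)}{2\lambda N}<0$ for all $\omega>\tfrac{N^2}{\lambda^2}$; likewise the Jensen-inequality term in the paper's proof of $(i)$ is, in your coordinates, just $-\tfrac{N}{\lambda}\sum_{k\geq 2}|u_k(0)|^2$. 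What your route buys: the counting becomes additive over channels, the $H^1_{\ev}(\mathcal{G})$ claim in $(iv)$ is transparent (the even subspace is exactly the Robin channel plus one Neumann channel, so $n=1+1$), and you in fact obtain the sharper equality $n(\mathbf{L}^\lambda_{1,\omega})=N$ for $\omega>\tfrac{N^2}{\lambda^2}\tfrac{p+1}{p-1}$, since each Neumann channel then carries exactly one negative eigenvalue. What the paper's route buys is independence from symmetry: its edge-by-edge factorization does not require all components of the profile to coincide, which is why it is the natural starting point for bump profiles, where your diagonalization is unavailable. One notational point to repair in a final write-up: $\psi=(\varphi_0^\lambda)'$ lies in $\dom(\ell_1^{\mathrm{Rob}})$ only when $\omega=\tfrac{N^2}{\lambda^2}$, so your expression $(\ell_1^{\mathrm{Rob}}\psi,\psi)$ must be read as the quadratic form evaluated on $H^1(\mathbb{R}_+)$ --- which is all that Proposition \ref{AlbNizh} requires, and is the same device the paper employs when evaluating $F^\lambda_{1,\omega}(\widetilde{\mathbf{\Psi}})$ at a vector outside $D_\lambda$.
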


\begin{proof}[\bf Proof.]
 
 $(i)$
It is clear that
$\mathbf{\Phi}_{\lambda,\delta'}\in\ker(\mathbf{L}_{2,\lambda})$.
To show the equality
$\ker(\mathbf{L}_{2,\lambda})=\Span\{\mathbf{\Phi}_{\lambda,\delta'}\}$
let us note that any $\mathbf{V}=(v_j)_{j=1}^N\in
H^2(\mathcal{G})$ satisfies the following identity
\begin{equation}\label{identity_graph'}
-v_j''+\omega v_j-(\varphi_{\lambda,\delta'})^{p-1}v_j=
\frac{-1}{\varphi_{\lambda,\delta'}}\frac{d}{dx} \Big [\varphi_{\lambda,\delta'}^2\frac{d}{dx} \Big (\frac{v_j}{\varphi_{\lambda,\delta'}} \Big ) \Big ],\quad
x>0.
\end{equation}
Thus, for $\mathbf{V}\in \bb D_{\lambda,\delta'}$,
 we obtain from \eqref{identity_graph'}, \eqref{H_lambda}, and 
\eqref{varphi_lam}
  \begin{align*}
 &(\mathbf{L}_{2,\lambda}\mathbf{V},\mathbf{V})=
\sum\limits_{j=1}^N
\int
^{\infty}_{0}\varphi_{\lambda,\delta'}^2 \Big [\frac{d}{dx} \Big (\frac{v_j}{\varphi_{\lambda,\delta'}} \Big ) \Big ]^2dx+ R_{\lambda, N},\,\,\text{where}\\
&R_{\lambda, N}=\sum\limits_{j=1}^N \Big [v_j'(0)v_j(0)-v_j^2(0)\frac{\varphi'_{\lambda,\delta'}(0)}{\varphi_{\lambda,\delta'}(0)} \Big ]=\frac{1}{\lambda}\Big [\sum\limits_{j=1}^Nv_j(0) \Big ]^2-\frac{N}{\lambda}\sum\limits_{j=1}^Nv_j^2(0).
\end{align*}
The term $R_{\lambda, N}$ if positive for  $\lambda<0$ by Jensen's inequality applied to  $f(x)=x^2$. Thus, $(\mathbf{L}_{2,\lambda}\mathbf{V},\mathbf{V})>0$ for
$\mathbf{V}\in \bb
D_{\lambda,\delta'}\setminus\Span\{\mathbf{\Phi}_{\lambda,\delta'}\}$
which proves $(i)$.

 $(ii)$ 
Concerning the kernel of $\mathbf{L}_{1,\lambda}$, we recall that
the only $L^2(\mathbb{R}_+)$-solution of the equation $
 -v''_j+\omega v_j-p(\varphi_{\lambda,\delta'})^{p-1}v_j=0$  
is given by $v_j=\varphi'_{\lambda,\delta'}$ (up to a factor).
Thus, any element of $\ker(\mathbf{L}_{1,\lambda})$ has the form
$\mathbf{V}=(v_j)_{j=1}^N=(c_j\varphi'_{\lambda,\delta'})_{j=1}^N,\,
c_j\in\mathbb{R}$.
If $v'_1(0)=...=v'_N(0)\neq 0$, then by \eqref{H_lambda} we get
$c_1=...=c_N\neq 0$, and consequently $N
\varphi'_{\lambda,\delta'}(0)=\lambda\varphi''_{\lambda,\delta'}(0)$.
Therefore, $\omega=\frac{N^2}{\lambda^2}$, which is impossible.
Otherwise, the condition $v'_j(0)=0$ implies that
$\varphi''_{\lambda,\delta'}(0)=0$, which is equivalent to the
identity $\omega=\frac{N^2}{\lambda^2}\frac{p+1}{p-1}$. Thus, we
get that $c_1=...=c_N=0$ and $\mathbf{V}\equiv\mathbf{0}$ for
$\omega\neq \frac{N^2}{\lambda^2}\frac{p+1}{p-1}$.

The proof of the equality $n(\mathbf{L}_{1,\lambda})=1$ for
$\omega < \tfrac{N^2}{\lambda^2}\tfrac{p+1}{p-1}$ is similar to
the one in the case of the operator $\bb L_{1,0}$ defined by
\eqref{L^0_1}. Namely, denoting
\begin{equation}\label{l}
\opl_\lambda= \Big (\Big(-\frac{d^2}{dx^2}+\omega-p(\varphi_{\lambda,\delta'})^{p-1}\Big)\delta_{k,j} \Big ),
\end{equation}
we define the following symmetric operator
$\mathbf{L}'_0=\opl_\lambda$ with
\begin{equation*}
\dom(\mathbf{L}'_0)= \Big \{\mathbf{V}\in H^2(\mathcal{G}):
v'_1(0)=...=v'_N(0)=0, \;\sum\limits_{j=1}^N v_j(0)=0  \Big \}.
\end{equation*}
It is easily seen that $\mathbf{L}_{1,\lambda}$ is the
self-adjoint extension of $\mathbf{L}'_0$.
Let us show that the operator $\mathbf{L}'_0$ is non-negative.
First, note that any $\mathbf{V}=(v_j)_{j=1}^N\in
H^2(\mathcal{G})$ satisfies the following identity
\begin{equation*}
-v_j''+\omega v_j-p(\varphi_{\lambda,\delta'})^{p-1}v_j=
\frac{-1}{\varphi'_{\lambda,\delta'}}\frac{d}{dx} \Big [(\varphi'_{\lambda,\delta'})^2\frac{d}{dx} \Big (\frac{v_j}{\varphi'_{\lambda,\delta'}} \Big ) \Big ],\quad
x>0.
\end{equation*}
Using the above equality and integrating by parts, we get for
$\mathbf{V}\in \dom(\mathbf{L}'_0)$
 \begin{align}
  \label{nonneg_graph'}
\notag
(\mathbf{L}'_0\mathbf{V},\mathbf{V})=
\sum\limits_{j=1}^N \int
^{\infty}_{0}(\varphi'_{\lambda,\delta'})^2 \Big [\frac{d}{dx} \Big (\frac{v_j}{\varphi'_{\lambda,\delta'}} \Big ) \Big ]^2dx-\sum\limits_{j=1}^Nv_j^2(0)\frac{\varphi''_{\lambda,\delta'}(0)}{\varphi'_{\lambda,\delta'}(0)}.
 \end{align}
Taking into account that 
\begin{equation}\label{3.31a}
-v_j^2(0)\frac{\varphi''_{\lambda,\delta'}(0)}{\varphi'_{\lambda,\delta'}(0)}=v_j^2(0)\frac{\lambda\omega}{2N} \Big (p-1-(p+1)\frac{N^2}{\lambda^2\omega} \Big ),
\end{equation}
we get non-negativity of $\mathbf{L}'_0$ for
$\omega\leq\frac{N^2}{\lambda^2}\frac{p+1}{p-1}$.

Next, the adjoint operator of $\bb L'_0$ is given by 
\begin{equation*}
\label{adjoint_graph'} 
(\mathbf{L}'_0)^*=\opl_\lambda,\quad
\dom((\mathbf{L}'_0)^*)=\left\{\mathbf{V}\in H^2(\mathcal{G}):
v'_1(0)=...=v'_N(0)\right\}. 
\end{equation*}
The last formula can be shown analogously to
\eqref{adjoint_graph}. Due to the von Neumann decomposition
\eqref{d6}, we get (assuming that $\mathbf{L}'_0$  acts on complex-valued functions)
\begin{align*}
\dom((\mathbf{L}'_0)^*)=\dom(\mathbf{L}'_0)\oplus\Span\{\mathbf{V}_i\}\oplus\Span\{\mathbf{V}_{-i}\},
\end{align*}
where $\mathbf{V}_{\pm i}= (e^{i\sqrt{\pm
i}x} )_{j=1}^N,\, \Im(\sqrt{\pm i})>0$.
Indeed, since $\varphi_{\lambda,\delta'}\in
L^\infty(\mathbb{R_+})$, we get
$\dom((\mathbf{L}'_0)^*)=\dom((\mathbf{L'})^*)$, where
\begin{equation}\label{L'_symm}\mathbf{L'}= \Big (\Big(-\frac{d^2}{dx^2}\Big)\delta_{k,j} \Big ),
\quad \dom(\mathbf{L'})=\dom(\mathbf{L}'_0).\end{equation}
Finally, by \cite[Chapter IV, Theorem 6]{Nai67},
$n_{\pm}(\mathbf{L}'_0)=n_\pm(\mathbf{L'})=1.$
By Proposition \ref{semibounded}, $n(\mathbf{L}_{1,\lambda})\leq
1$.
Due to
$(\mathbf{L}_{1,\lambda}\mathbf{\Phi}_{\lambda,\delta'},\mathbf{\Phi}_{\lambda,\delta'})=-(p-1)||\mathbf{\Phi}_{\lambda,\delta'}||_{p+1}^{p+1}<0$,
we finally arrive at $n(\mathbf{L}_{1,\lambda})=1$, and $(ii)$ is
proved.

$(iii)$
From the proof of item $(ii)$ we induce that
$n(\mathbf{L}_{1,\lambda})=1$, and the kernel of
$\mathbf{L}_{1,\lambda}$ is nonempty as
$\omega=\frac{N^2}{\lambda^2}\frac{p+1}{p-1}$. Moreover, we know
that any element of the kernel has the form
$\mathbf{V}=(v_j)_{j=1}^N=(c_j\varphi'_{\lambda,\delta'})_{j=1}^N,\,
c_j\in\mathbb{R}$, and it is necessary that
$v_1'(0)=...=v_N'(0)=0$. Hence, the condition
\begin{equation}\label{condi}
\lambda v'_1(0)=\sum\limits_{j=1}^N v_j(0)=0
\end{equation} 
gives rise to $(N-1)$-dimensional kernel of
$\mathbf{L}_{1,\lambda}$. Since the functions
$\hat{\mathbf{\Phi}}_{\lambda,j}$, $1\leq j\leq N-1$, defined in
\eqref{Psi} are linearly independent and satisfy  condition
\eqref{condi}, they form the basis in
$\ker(\mathbf{L}_{1,\lambda})$, and $(iii)$ is proved.

$(iv)$
The identity $\ker(\mathbf{L}_{1,\lambda})=\{\mathbf{0}\}$ was
shown in $(ii)$.
To show the inequality $n(\mathbf{L}_{1,\lambda})\leq N$ we
introduce the following minimal symmetric operator
$\mathbf{L}_{\min}=\opl_\lambda$ with
\begin{equation}\label{L_min}
\dom(\mathbf{L}_{\min})=\left\{\mathbf{V}\in
H^2(\mathcal{G}):\begin{array}{c}
v'_1(0)=...=v'_N(0)=0,\\
v_1(0)=...=v_N(0)=0
\end{array}\right\},
\end{equation}
where $\opl_\lambda$ is defined in \eqref{l}. The operator
$\mathbf{L}_{1,\lambda}$ is  self-adjoint extension of
$\mathbf{L}_{\min}$.
From the formula \eqref{3.31a}  it follows that
$\mathbf{L}_{\min}$ is a  non-negative operator.
It is obvious that  $\mathbf{L}_{\min}^*=\opl_\lambda$,
$
 \dom(\mathbf{L}_{\min}^*)=
H^2(\mathcal{G}).
$
Then, due to the von Neumann formula (for $\bb L_{\min}$ acting on complex-valued functions)
$$\dom(\mathbf{L}_{\min}^*)=\dom(\mathbf{L}_{\min})\oplus\Span\{\mathbf{V}^1_i,..,\mathbf{V}^N_i\}\oplus\Span\{\mathbf{V}^1_{-i},..,\mathbf{V}^N_{-i}\},$$
where
$\mathbf{V}^j_{\pm i}=(0,...,
\underset{\bf j}{e^{i\sqrt{\pm i}x}},0,...,0),$
$\Im(\sqrt{\pm i})>0$, and consequently 
$n_{\pm}(\mathbf{L}_{\min})$ $=N$.   
By Proposition \ref{semibounded}, $n(\mathbf{L}_{1,\lambda})\leq
N$.

Let $N$ be even. It is easily seen that
$n_{\pm}(\mathbf{L}_{\min})=2$ in
$L^2_{\frac{N}{2}}(\mathcal{G})$. Indeed,
$  \dom(\mathbf{L}_{\min}^*)=\dom(\mathbf{L}_{\min})\oplus
\Span\{\widetilde{\bf V}^1_{i}, \widetilde{\bf
V}^2_{i}\}\oplus\Span\{\widetilde{\bf V}^1_{-i}, \widetilde{\bf
V}^2_{-i}\}, $
 where 
$ 
\widetilde{\bf V}^1_{\pm i}=(\underset{\bf 1}{e^{i\sqrt{\pm
i}x}},...,\underset{\bf N/2}{e^{i\sqrt{\pm i}x}},\underset{\bf
N/2+1}{0},...,\underset{\bf N}{0}),
$  
and
$ 
\widetilde{\bf V}^2_{\pm i}=(\underset{\bf
1}{0},...,\underset{\bf N/2}{0},\underset{\bf
N/2+1}{e^{i\sqrt{\pm i}x}},..., \underset{\bf N}{e^{i\sqrt{\pm
i}x}}).
$ 
By Proposition \ref{semibounded}, we get
$n(\mathbf{L}_{1,\lambda})\leq 2$ in
$L^2_{\frac{N}{2}}(\mathcal{G})$.

Let us introduce the following quadratic form $\bb F_{1,\lambda}$
associated with the operator $\mathbf{L}_{1,\lambda}$
\begin{equation*}
\bb
F_{1,\lambda}(\mathbf{V})=||\mathbf{V}'||^2+\omega||\mathbf{V}||^2-p\sum\limits_{j=1}^N
\int
_{0}^{\infty}(\varphi_{\lambda,\delta'})^{p-1}|v_j|^2dx+\tfrac
1{\lambda} \Big |\sum\limits_{j=1}^Nv_j(0) \Big |^2,
\end{equation*}
with $\dom(\bb F_{1,\lambda})=H^1(\mathcal{G})$. Let $
\bf \Phi^-_\lambda=(\underset{\bf
1}{\varphi'_{\lambda,\delta'}},..., \underset{\bf
N/2}{\varphi'_{\lambda,\delta'}},\underset{\bf
N/2+1}{-\varphi'_{\lambda,\delta'}},...,\underset{\bf
N}{-\varphi'_{\lambda,\delta'}}),
$
then integrating by parts we obtain
\begin{align*}
&\bb F_{1,\lambda}(\bb\Phi^-_\lambda)=N \int
_{0}^\infty\varphi'_{\lambda,\delta'}\left(-\varphi'''_{\lambda,\delta'}+\omega\varphi'_{\lambda,\delta'}-p(\varphi_{\lambda,\delta'})^{p-1}\varphi'_{\lambda,\delta'}\right)dx\\
&-N\varphi'_{\lambda,\delta'}(0)\varphi''_{\lambda,\delta'}(0)
=\tfrac{N^2}{2\lambda}\omega\left[\left(\tfrac{(p+1)\omega}{2}\right)\left(1-\tfrac{N^2}{\lambda^2\omega}\right)\right]^{\tfrac
2{p-1}}\left(p-1-(p+1)\tfrac{N^2}{\lambda^2\omega}\right),
\end{align*} 
which is negative for
$\omega>\frac{N^2}{\lambda^2}\frac{p+1}{p-1}$. Since
$(\mathbf{L}_{1,\lambda}\mathbf{\Phi}_{\lambda,\delta'},\mathbf{\Phi}_{\lambda,\delta'})<0$,
we get by orthogonality of $\bb\Phi^-_\lambda$ and
$\mathbf{\Phi}_{\lambda,\delta'}$
$$
\bb F_{1,\lambda}(s\mathbf{\Phi}_{\lambda,\delta'}+r\bb
\Phi^-_\lambda)=|s|^2\bb F_{1,\lambda}(\mathbf{\Phi}_{\lambda,\delta'})+|r|^2\bb F_{1,\lambda}(\bb
\Phi^-_\lambda)<0.
$$
Thus, we obtain that $\bb F_{1,\lambda}$ is negative on
two-dimensional subspace
$\mathcal{M}=\Span\{\mathbf{\Phi}_{\lambda,\delta'},\bf
\Phi^-_\lambda\}$. Therefore, by minimax principle, we get
$n(\mathbf{L}_{1,\lambda})\geq 2$.
 The assertion $(iv)$ is proved.
The proof of item $(v)$ is standard and relies on Weyl's theorem.
This finishes the proof of the Proposition.
\end{proof} 

Finally, we have to study the sign of
$\partial_\omega||\mathbf{\Phi}_{\lambda,\delta'}||^2$.
\begin{proposition}\label{slope_graph'}
Let $\omega> \tfrac{N^2}{\lambda^2}$, $\lambda<0$, and
$J(\omega)=\partial_\omega||\mathbf{\Phi}_{\lambda,\delta'}||^2$.
\begin{itemize}
\item[$(i)$] If $1<p\leq 5$, then $J(\omega)>0$.
\item[$(ii)$] If $p>5$, then there exists $\omega^*$ such that
$J(\omega^*)=0$, and $J(\omega)>0$ for
$\omega\in (\frac{N^2}{\lambda^2},\omega^* )$, while
$J(\omega)<0$ for $\omega\in (\omega^*,\infty)$.
\end{itemize} 
\end{proposition}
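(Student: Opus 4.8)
The plan is to reduce the vector norm to a single explicit integral and then differentiate. Since all components of $\mathbf{\Phi}_0^\lambda=(\varphi_0^\lambda)_{j=1}^N$ coincide, we have $\|\mathbf{\Phi}_0^\lambda\|^2 = N\int_0^\infty (\varphi_0^\lambda)^2\,dx$, and I would insert the explicit profile \eqref{varphi_lam} and substitute $t=\tfrac{(p-1)\sqrt\omega}{2}x+y_0$, where $y_0=y_0(\omega)=\tanh^{-1}\big(\tfrac{-N}{\lambda\sqrt\omega}\big)>0$ (well defined and positive precisely because $\lambda<0$ and $\omega>\tfrac{N^2}{\lambda^2}$). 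This produces $\|\mathbf{\Phi}_0^\lambda\|^2 = C\,\omega^{k}G(\omega)$ with $C>0$ a constant, $k=\tfrac{5-p}{2(p-1)}$, $q=\tfrac{4}{p-1}$, and $G(\omega)=\int_{y_0(\omega)}^\infty \sech^{q}(t)\,dt$. Both the power $\omega^k$ and the moving lower limit carry the $\omega$-dependence.

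Next I would differentiate by the Leibniz rule, obtaining $J(\omega)=C\omega^{k-1}\big(kG(\omega)+\omega G'(\omega)\big)$; since the prefactor $C\omega^{k-1}$ is positive, the sign of $J$ equals the sign of $kG+\omega G'$. Differentiating the relation $\tanh y_0=-N/(\lambda\sqrt\omega)$ and using $1-\tanh^2=\sech^2$ gives $y_0'(\omega)=-\tfrac{\sinh y_0\cosh y_0}{2\omega}$, whence $\omega G'(\omega)=\tfrac12\sech^{q-2}(y_0)\tanh(y_0)$. Therefore the sign of $J(\omega)$ is the sign of
\[
\Psi(y_0):=k\int_{y_0}^\infty \sech^{q}(t)\,dt+\tfrac12\sech^{q-2}(y_0)\tanh(y_0).
\]
For part $(i)$, the range $1<p\le 5$ gives $k\ge 0$, so both summands are nonnegative and the boundary term is strictly positive (as $y_0>0$); hence $\Psi(y_0)>0$ and $J(\omega)>0$.

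For part $(ii)$, $p>5$ gives $k<0$, so the two terms compete and I would establish a single sign change through monotonicity of $\Psi$ in the variable $y$. The key algebraic fact is the identity $q-1=2k$ (equivalently $\tfrac{4}{p-1}-1=\tfrac{5-p}{p-1}$): differentiating $\Psi$ and reducing with $\sech^2=1-\tanh^2$, this identity cancels the $\tanh^2$-terms and collapses the derivative to $\Psi'(y)=\big(\tfrac12-k\big)\sech^{q-2}(y)$, with $\tfrac12-k=\tfrac{p-3}{p-1}>0$. Thus $\Psi$ is strictly increasing. Examining the endpoints, $\Psi(0^+)=k\int_0^\infty\sech^{q}<0$, while $\Psi(y)\to+\infty$ as $y\to\infty$ (since $q<1$ forces $\sech^{q-2}=\cosh^{2-q}\to\infty$ and $\tanh y\to 1$), so $\Psi$ has a unique zero.

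Finally I would transport this back to $\omega$: the map $\omega\mapsto y_0(\omega)$ is a strictly decreasing smooth bijection of $\big(\tfrac{N^2}{\lambda^2},\infty\big)$ onto $(0,\infty)$, so the composite $\Phi(\omega)=\Psi(y_0(\omega))$ is strictly decreasing, positive near $\omega=\tfrac{N^2}{\lambda^2}$ and negative for large $\omega$; this yields a unique $\omega^*$ with $J>0$ on $\big(\tfrac{N^2}{\lambda^2},\omega^*\big)$ and $J<0$ on $(\omega^*,\infty)$. The \emph{main obstacle} is the uniqueness of $\omega^*$, and the whole argument hinges on spotting the identity $q-1=2k$, which is exactly what renders $\Psi$ monotone; everything else is routine calculus and hyperbolic bookkeeping.
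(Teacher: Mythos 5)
Your proof is correct and follows essentially the same route as the paper: reduce $\|\mathbf{\Phi}_0^\lambda\|^2$ to an explicit one-dimensional integral, factor out the positive prefactor $C\omega^{k-1}$, prove strict monotonicity of the remaining factor, and compare endpoint signs. Indeed, your auxiliary function satisfies $2\Psi(y_0(\omega))=J_1(\omega)$ (the paper's factor, seen through the substitution $t=\tanh y$), so your monotonicity of $\Psi$ in $y$ combined with the strictly decreasing bijection $\omega\mapsto y_0(\omega)$ is precisely the paper's statement that $J_1'(\omega)<0$, with your identity $q-1=2k$ playing the role of the cancellation in the paper's formula for $J_1'$.
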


\begin{proof}[\bf Proof.]
Recall that
$\mathbf{\Phi}_{\lambda,\delta'}=(\varphi_{\lambda,\delta'})_{j=1}^N$,
where $\varphi_{\lambda,\delta'}$ is defined by
\eqref{varphi_lam}. We have via
change of variables
\begin{align*}
& \int
_{0}^\infty(\varphi_{\lambda,\delta'}(x))^2dx=\left(\frac{p+1}{2}\right)^{\tfrac 2{p-1}}\frac{2\omega^{\tfrac
2{p-1}-\tfrac 1{2}}}{p-1} \int
_{\tfrac{N}{|\lambda|\sqrt{\omega}}}^1(1-t^2)^{\tfrac
2{p-1}-1}dt.
\end{align*}
From the last  equality, we get 
\begin{equation}\label{J} 
 J(\omega)=C\omega^{\tfrac {7-3p}{2(p-1)}}J_1(\omega),\qquad C=\tfrac{N}{p-1}\left(\frac{p+1}{2}\right)^{\tfrac
2{p-1}}, 
 \end{equation}
where
$$
J_1(\omega)= \tfrac{5-p}{p-1} \int _{\tfrac
N{|\lambda|\sqrt{\omega}}}^1(1-t^2)^{\tfrac
{3-p}{p-1}}dt+\tfrac{N}{|\lambda|\sqrt{\omega}}(1-\tfrac{N^2}{\lambda^2\omega})^{\tfrac{3-p}{p-1}}.
$$ 
Thus,
\begin{equation}\label{J'}
J'_1(\omega)=\tfrac{N}{|\lambda|\omega^{3/2}}\tfrac{3-p}{p-1}
 \Big [ \Big (1-\tfrac{N^2}{\lambda^2\omega}
 \Big )^{\tfrac{3-p}{p-1}}+\tfrac{N^2}{\lambda^2\omega}
 \Big (1-\tfrac{N^2}{\lambda^2\omega} \Big )^{-\tfrac{2(p-2)}{p-1}} \Big ].
\end{equation}
It is immediate that $J(\omega)>0$ for 
$1< p\leq 5$. Consider the  case $p>5$.   
It is easily seen 
\begin{equation*}
\lim_{\omega\to +\infty}J_1(\omega)
=\frac{5-p}{p-1}\int_0^{1}(1-t^2)^{\tfrac {3-p}{p-1}}dt<0,
\quad \lim_{\omega\to \frac{N^2}{\lambda^2}}J_1(\omega)=\infty.
 \end{equation*}
 Moreover, from \eqref{J'} 
 it follows that $J'_1(\omega)<0$ for 
 $\omega> \tfrac{N^2}{\lambda^2}$, and 
 consequently $J_1(\omega)$ is strictly decreasing. 
Therefore, there exists a unique
$\omega^*>\tfrac{N^2}{\lambda^2}$
 such that $J_1(\omega^*)=J(\omega^*)=0$, 
 consequently  $J(\omega)>0$ for $
 \omega\in(\tfrac{N^2}{\lambda^2},\omega^*)$,
 and $J(\omega)<0$ for $\omega\in(\omega^*, \infty)$. 
 \end{proof}
\noindent{\bf Proof of Theorem \ref{Main}.}
$(i)$
 $1)$ 
Combining Theorem \ref{well1}, Theorem \ref{stabil_graph}
(adapted to the case of the NLS-$\delta'$ equation), Proposition
\ref{spec_graph'} (items $(i)$, $(ii)$ and $(v)$), and
Proposition \ref{slope_graph'}-$(i)$, we get stability of
$e^{i\omega t}\mathbf{\Phi}_{\lambda,\delta'}$ in
$H^1(\mathcal{G})$.
 
 $2)$
Combining Theorem \ref{stabil_graph}, Proposition
\ref{spec_graph'} (items $(i)$, $(iv)$ and $(v)$), and
Proposition \ref{slope_graph'}-$(i)$, we get orbital instability
of $e^{i\omega t}\mathbf{\Phi}_{\lambda,\delta'}$ in
$H^1_{\frac{N}{2}}(\mathcal{G})$ (compare with Remark
\ref{nonstab}-$(ii)$). We note that well-posedness of the Cauchy
problem associated with equation \eqref{NLS_graph'} in
$H^1_{\frac{N}{2}}(\mathcal{G})$ follows from the uniqueness of
the solution to the Cauchy problem in $H^1(\mathcal{G})$ and the
fact that the group $e^{-it\bb H_\lambda^{\delta'}}$ preserves
the space $H^1_{\frac{N}{2}}(\mathcal{G})$. Finally, instability
in the smaller space $H^1_{\frac{N}{2}}(\mathcal{G})$ induces
instability in all $H^1(\mathcal{G})$.

$(ii)$
Relative position of $\omega^*$ and
$\omega=\tfrac{N^2}{\lambda^2}\tfrac{p+1}{p-1}$ is not clear (see
Remark \ref{rel_posit}), which complicates the analysis in the
framework of Theorem \ref{stabil_graph}. But we can overcome this
difficulty restricting the operator $\bb L_{1,\lambda}$ onto the
space $L^2_{\eq}(\mathcal{G})$ defined by
$$L^2_{\eq}(\mathcal{G})=\{\mathbf{V}=(v_j)_{j=1}^N\in
L^2(\mathcal{G}): v_1(x)=...=v_{N}(x),\, x>0\}.$$
Moreover, we introduce
$H^1_{\eq}(\mathcal{G})=H^1(\mathcal{G})\cap
L^2_{\eq}(\mathcal{G})$. We note that $H^1_{\eq}(\mathcal{G})$ is
also preserved by the group $e^{-it\bb H_\lambda^{\delta'}}$.
 
Recall that $\bb L_{1,\lambda}$ is the self-adjoint extension of
the minimal symmetric operator $\bb L_{\min}$ defined by
\eqref{L_min}. It is easily seen that the operator $\bb
L_{\min}|_{L^2_{\eq}(\mathcal{G})}$ satisfies  $\mathcal{N}_\pm(\bb
L_{\min}|_{L^2_{\eq}(\mathcal{G})})=\Span\{(e^{i\sqrt{\pm
i}x})_{j=1}^N\}$. The last equality, by Proposition
\ref{semibounded}, implies $n(\bb
L_{1,\lambda}|_{L^2_{\eq}(\mathcal{G})})=1$ since $(\bb
L_{1,\lambda} \mathbf{\Phi}_{\lambda,\delta'},
\mathbf{\Phi}_{\lambda,\delta'}) <0$ and
$\mathbf{\Phi}_{\lambda,\delta'}\in L^2_{\eq}(\mathcal{G})$.
 
Without loss of generality we can assume that $\omega^*\neq
\tfrac{N^2}{\lambda^2}\tfrac{p+1}{p-1}.$
All our forthcoming conclusions about orbital stability are based
on Theorem \ref{stabil_graph} for the spaces $H^1(\mathcal{G})$
and $H^1_{\eq}(\mathcal{G})$, Remark \ref{nonstab}, Theorem
\ref{well1}, Proposition \ref{spec_graph'}, and Proposition
\ref{slope_graph'}. Consider 2 cases.

\textbf{1.} Suppose that
$\omega^*<\tfrac{N^2}{\lambda^2}\tfrac{p+1}{p-1}$.

  Let
$\omega<\omega^*<\tfrac{N^2}{\lambda^2}\tfrac{p+1}{p-1}$, then
$n(\bb L_{1,\lambda})=1$ in $L^2(\mathcal{G})$ and we have
$\partial_\omega||\mathbf{\Phi}_{\lambda,\delta'}||^2>0$.
Therefore, $e^{i\omega t}\mathbf{\Phi}_{\lambda,\delta'}$ is
orbitally stable in $H^1(\mathcal{G})$, and hence in
$H^1_{\eq}(\mathcal{G})$.
  
  If
$\omega^*<\omega<\tfrac{N^2}{\lambda^2}\tfrac{p+1}{p-1}$, then
$n(\bb L_{1,\lambda})=1$ in $L^2(\mathcal{G})$ and
$\partial_\omega||\mathbf{\Phi}_{\lambda,\delta'}||^2<0$, which
induces orbital instability of $e^{i\omega
t}\mathbf{\Phi}_{\lambda,\delta'}$ in $H^1(\mathcal{G})$.
    
  Let
$\omega>\tfrac{N^2}{\lambda^2}\tfrac{p+1}{p-1}>\omega^*$. Then
$n(\bb L_{1,\lambda}|_{L^2_{\eq}(\mathcal{G})})=1$ and also
$\partial_\omega||\mathbf{\Phi}_{\lambda,\delta'}||^2<0$, which
induces orbital instability of $e^{i\omega
t}\mathbf{\Phi}_{\lambda,\delta'}$ in $H^1_{\eq}(\mathcal{G})$
and consequently in $H^1(\mathcal{G})$.
  
\textbf{ 2.}
Suppose that  $\omega^*>\tfrac{N^2}{\lambda^2}\tfrac{p+1}{p-1}$.

 If
$\omega<\tfrac{N^2}{\lambda^2}\tfrac{p+1}{p-1}<\omega^*$, then
$n(\bb L_{1,\lambda})=1$ in $L^2(\mathcal{G})$ and
$\partial_\omega||\mathbf{\Phi}_{\lambda,\delta'}||^2>0$,
consequently $e^{i\omega t}\mathbf{\Phi}_{\lambda,\delta'}$ is
orbitally stable in $H^1(\mathcal{G})$, and therefore in
$H^1_{\eq}(\mathcal{G})$.
 
  If
$\tfrac{N^2}{\lambda^2}\tfrac{p+1}{p-1}<\omega<\omega^*$, then
$n(\bb L_{1,\lambda}|_{L^2_{\eq}(\mathcal{G})})=1$ and
$\partial_\omega||\mathbf{\Phi}_{\lambda,\delta'}||^2>0$, which
induces stability of $e^{i\omega
t}\mathbf{\Phi}_{\lambda,\delta'}$ in $H^1_{\eq}(\mathcal{G})$ .

  Let
$\omega>\omega^*>\tfrac{N^2}{\lambda^2}\tfrac{p+1}{p-1}$, then
$n(\bb L_{1,\lambda}|_{L^2_{\eq}(\mathcal{G})})=1$ and
$\partial_\omega||\mathbf{\Phi}_{\lambda,\delta'}||^2<0$, which
induces orbital instability of $e^{i\omega
t}\mathbf{\Phi}_{\lambda,\delta'}$ in $H^1_{\eq}(\mathcal{G})$
and consequently in $H^1(\mathcal{G})$.
  
Summarizing all the cases, we get for $\omega>\omega^*$ nonlinear
instability of $e^{i\omega t}\mathbf{\Phi}_{\lambda,\delta'}$ in
$H^1(\mathcal{G})$, and for $\omega<\omega^*$ stability of
$e^{i\omega t}\mathbf{\Phi}_{\lambda,\delta'}$ at least in
$H^1_{\eq}(\mathcal{G})$. This finishes the proof.
\qed

\begin{remark}
\rm 
$(i)$
It is worth mentioning that the orbital instability result
follows easily for $2<p<5$ from the spectral instability using
the fact that the mapping data-solution for \eqref{NLS_graph'} is
of class $C^2$ (see Theorem \ref{well1} and Remark
\ref{nonstab}-$(iii)$).

$(ii)$ Observe that for $p>5$ the orbital instability results are
obtained via classical approach by \cite{GrilSha87} without using spectral instability. 
Otherwise, the orbital instability can be deduced from the
spectral one since for $p>5$ the mapping data-solution for
\eqref{NLS_graph'} is of class $C^2$.
 
\end{remark}

\begin{remark}\label{rel_posit}
\rm
Note that the integral appearing in \eqref{J} (via change of
variables) is related to the incomplete Beta function
$$
B\Big(y; \frac12, b\Big)=\int_0^{y}x^{-\tfrac1 {2}}(1-x)^{b-1}dx,$$
with $b= \frac{2}{p-1}$. Using basic numerical simulations, one can
show that for $p=6,7,...$, relation
$\omega^*>\tfrac{N^2}{\lambda^2}\tfrac{p+1}{p-1}$ holds. By the
continuity of the function $J$ as a function of $p$, we get the
relation $\omega^*>\tfrac{N^2}{\lambda^2}\tfrac{p+1}{p-1}$ in the
neighborhood of every integer $p>5$.

We conjecture that
$\omega^*>\tfrac{N^2}{\lambda^2}\tfrac{p+1}{p-1}$ holds for any
$p>5$.
This conjecture by Theorem \ref{stabil_graph} implies the
following stability properties of $e^{i\omega
t}\mathbf{\Phi}_{\lambda,\delta'}$ in the case $p>5$:
\begin{itemize}
\item[$(i)$]
if
$\omega\in (\tfrac{N^2}{\lambda^2},\tfrac{N^2}{\lambda^2}\tfrac{p+1}{p-1} )$,
then $e^{i\omega t}\mathbf{\Phi}_{\lambda,\delta'}$ is stable in
$H^1(\mathcal{G})$;

\item[$(ii)$] if
$\omega\in (\tfrac{N^2}{\lambda^2}\tfrac{p+1}{p-1},\omega^* )$
and $N$ is even, then $e^{i\omega
t}\mathbf{\Phi}_{\lambda,\delta'}$ is unstable in
$H^1(\mathcal{G})$.
\end{itemize}
 \end{remark} 
 
\section{Stability theory of standing wave solutions for the
NLS-log-$\delta$ and the NLS-log-$\delta'$ equation on a star
graph}\label{log}
\subsection{ The NLS-log-$\delta$   equation on a star graph}
In this Subsection we prove spectral instability of the $N$-bump
stationary state solution
$\mathbf{\Psi}_{\alpha,\delta}=(\psi_{\alpha,\delta})_{j=1}^N$ of
Gaussian type, where
$
\psi_{\alpha,\delta}(x)=
e^{\tfrac{\omega+1}{2}}e^{-\tfrac{(x-\tfrac{\alpha}{N})^2}{2}},$
$\alpha>0,\, \omega\in \mathbb{R}.
$ 
We also extend the stability result in \cite{Ard16} for any
$\alpha<0$ (see Theorem \ref{stabil_log_delta}).

Since well-posedness is a crucial assumption for stability
theory, it is worth proving that  equation
\eqref{NLS_graph_log1} is well-posed in the space $W^1_{\mathcal
E}(\mathcal{G})$. In \cite{Ard16} the following well-posedness
result in $W_{\mathcal E}(\mathcal{G})$ was proved.

\begin{proposition}\label{well_log_graph} For any $\bb U_0\in
W_{\mathcal{E}}(\mathcal{G})$ there is a unique solution\newline
$\bb U \in C(\mathbb{R}, W_{\mathcal{E}}(\mathcal{G}))\cap
C^1(\mathbb{R}, W'_{\mathcal E}(\mathcal{G}))$ of
\eqref{NLS_graph_log1} such that $\bb U(0) = \bb U_0$ and\,\,
$\sup\limits_{t\in\mathbb{R}}||\bb
U(t)||_{W_{\mathcal{E}}(\mathcal{G})} <\infty$. Furthermore, the
conservation of energy and charge holds, that is,
$$E_{\alpha,\log}(\bb U(t)) = E_{\alpha,\log}(\bb U_0),\,\,\,
\text{and}\,\,\, Q(\bb U(t))=||\bb U(t)||^2=||\bb U_0||^2,$$
where the energy $E_{\alpha,\log}$ is defined for
$\mathbf{V}=(v_j)_{j=1}^N\in W_{\mathcal E}(\mathcal{G})$ by
 \begin{equation*} 
E_{\alpha,\log}(\bb V)=\tfrac 1{2}||\bb V'||^2-\tfrac
1{2}\sum\limits_{j=1}^N \int _0^\infty|v_j|^2\logg|v_j|^2dx
+\tfrac{\alpha}{2}|v_1(0)|^2.
\end{equation*}
\end{proposition}

Using the above result, we obtain well-posedness in
$W^1_{\mathcal{E}}(\mathcal{G})$.
\begin{theorem}\label{well_pos_log}
If $\bb U_0\in W^1_{\mathcal{E}}(\mathcal{G})$, there is a unique
solution $\bb U(t)$ of \eqref{NLS_graph_log1} such that $\bb
U(t)\in C(\mathbb R, W^1_{\mathcal{E}}(\mathcal{G}))$ and $\bb
U(0)=\bb U_0$.
\end{theorem}

\begin{proof}[\bf Proof.]
The proof can be found in \cite{AngGol18}. Basically it follows
from Proposition \ref{well_log_graph} and two additional facts.
The first one is that $W^1_{\mathcal{E}}(\mathcal{G})\subset
W_{\mathcal{E}}(\mathcal{G})$ (see \cite[Lemma 3.1]{A3}). And the
second one is the continuity of the mapping $t\mapsto ||x\bb
U(t)||^2$ on $\mathbb{R}$.
\end{proof}
 
The strategy of the proof of Theorem \ref{stabil_log_delta} is
analogous to the one in the previous case of the NLS equation with
power nonlinearity. In particular, we will use the adapted
(weaker) version of the stability/instability Theorem
\ref{stabil_graph} (to the specific Gaussian profile $\bb
\Psi_{\alpha,\delta}$ and the space
$W^1_{\mathcal{E}}(\mathcal{G})$).

Consider the following two harmonic oscillator self-adjoint
matrix operators with domain $\dom(\bb T_{1,\alpha})=\dom(\bb
T_{2,\alpha})=\bb D_{\alpha,\delta}^{\log}$ defined by
 \begin{align*} 
  \bb
T_{1,\alpha}&= \Big (\Big(-\frac{d^2}{dx^2}+(x-\tfrac{\alpha}{N})^2-3\Big)\delta_{k,j} \Big ),
\\
\bb T_{2,\alpha}&= \Big (\Big(-\frac{d^2}{dx^2}+
(x-\tfrac{\alpha}{N})^2-1\Big)\delta_{k,j} \Big ),\\
\bb D_{\alpha,\delta}^{\log}:&= \Big \{\mathbf{V}\in W^2(\mathcal{G}):
v_1(0)=...=v_N(0),\quad \sum\limits_{j=1}^N v_j'(0)=\alpha v_1(0)
 \Big \},
\end{align*}
where $\delta_{k,j}$ is the Kronecker symbol. These operators are
associated with \newline $\bb H_{\alpha,\log}:= (\bb
S_{\alpha,\log})''(\bb \Psi_{\alpha,\delta})$ (where $\bb
S_{\alpha,\log}$ is defined by \eqref{act_log}) in a standard
way, i.e. 
\begin{equation*}\label{L1+L2_log}
\bb H_{\alpha,\log}=\left(\begin{array}{cc} \bb T_{1,\alpha}&\bb
0\\
 \bb 0&\bb T_{2,\alpha}\end{array}\right).
\end{equation*}
Noting that $\partial_\omega||\bb \Psi_{\alpha,\delta}||^2>0$,
$E_{\alpha,\log}\in C(W^1_\mathcal{E}(\mathcal{G}),\mathbb{R})$
(see \cite[Proposition 2.3]{AngGol18}), and combining
\cite[Theorem 3.5]{GrilSha87} with \cite[Theorem 5.1]{GrilSha90},
we can formulate the stability/instability theorem for the
NLS-log-$\delta$ equation.
\begin{theorem}\label{stabil_graph_log}
Let $\alpha\neq 0$, and $n(\bb H_{\alpha,\log})$ be the number of
negative eigenvalues of $\bb H_{\alpha,\log}$. Suppose also that
 
$1)$\,\,$\ker(\bb T_{2,\alpha})=\Span\{\bb
\Psi_{\alpha,\delta}\}$,
 
 $2)$\,\,$\ker(\bb T_{1,\alpha})=\{\bb 0\}$, 
 
$3)$\,\, the negative spectrum of $\bb T_{1,\alpha}$ and $\bb
T_{2,\alpha}$ consists of a finite number of negative eigenvalues
(counting multiplicities),
  
$4)$\,\, the rest of the spectrum of $\bb T_{2,\alpha}$ and $\bb
T_{1,\alpha}$ is positive and bounded away from zero.
  \
  Then the following assertions hold.
\begin{itemize}
\item[$(i)$] If $n(\bb H_{\alpha,\log})=1$, then the standing
wave $e^{i\omega t}\bb \Psi_{\alpha,\delta}$ is orbitally stable
in $W^1_{\mathcal{E}}(\mathcal{G})$.
\item[$(ii)$] If $n(\bb H_{\alpha,\log})=2$ in
$L^2_k(\mathcal{G})$, then the standing wave $e^{i\omega t}\bb
\Psi_{\alpha,\delta}$ is spectrally unstable.
\end{itemize}
\end{theorem}

\begin{remark}
\rm
$(i)$
By saying $e^{i\omega t}\bb \Psi_{\alpha,\delta}$ ``{\it is
spectrally unstable}'' we mean that the spectrum of the linear
part $\bb A_{\alpha,\log}=\left(\begin{array}{cc} \bb 0& \bb
T_{2,\alpha} \\ -\bb T_{1,\alpha} & \bb 0 \end{array}\right)$ of
the linearization of the NLS-log-$\delta$ equation around $\bb
\Psi_{\alpha,\delta}$ contains an eigenvalue with positive real
part.

$(ii)$
In item $(ii)$ we affirm only spectral instability since we can't
apply neither \cite[Corollary 3 and 4]{oh} (since we don't know
if $E_{\alpha,\log}\in
C^2(W^1_\mathcal{E}(\mathcal{G}),\mathbb{R})$), nor \cite[Theorem
2 Remark, Section 2]{HenPer82} (since we don't know if the
mapping data-solution associated to the NLS-log-$\delta$ equation is
of class $C^2$ around $\bb \Psi_{\alpha,\delta}$) to prove
orbital instability (see Remark \ref{nonstab} above).

\end{remark}

Below we study the spectral properties of $\bb T_{1,\alpha}$ and
$\bb T_{2,\alpha}$.
To investigate the spectrum of the operator $\bb T_{1,\alpha}$ we
will use the perturbation theory analogously to the previous case
of the NLS-$\delta$ equation with power nonlinearity.
In particular, define the following self-adjoint Schr\"odinger
operator on $L^2(\mathcal{G})$ with Kirchhoff condition at
$\nu=0$
\begin{align}
\label{T^0_1} 
&\bb
T_{1,0}= \Big (\Big(-\frac{d^2}{dx^2}+x^2-3\Big)\delta_{i,j} \Big ),
\\
&\dom(\bb T_{1,0})= \Big \{\mathbf{V}\in W^2(\mathcal{G}):
v_1(0)=...=v_N(0),\,\,\sum\limits_{j=1}^N v_j'(0)=0 \Big \}.
\notag
 \end{align}
As above $\bb T_{1,\alpha}$ ``tends" to $\bb T_{1,0}$ for
$\alpha\to 0$. In the next Theorem we describe the spectral
properties of $\bb T_{1,0}$.
 \begin{theorem}\label{spect_T^0_1}
Let $\bb T_{1,0}$ be defined by \eqref{T^0_1} and
$k\in\left\{1,...,N-1\right\}$. Then the following assertions hold
\begin{itemize}
  \item[$(i)$] 
$\ker(\bb
T_{1,0})=\Span\{\hat{\bb\Psi}_{0,1},...,\hat{\bb\Psi}_{0,
N-1}\}$,
  where \begin{equation*} 
\hat{\mathbf{\Psi}}_{0,j}=(0,...,0,\underset{\bf
j}{\psi'_{0}},\underset{\bf j+1}{-\psi'_{0}},0,...,0),
\quad \psi_{0}(x)=e^{-\tfrac{x^2}{2}}.
 \end{equation*}
\item[$(ii)$] In the space $L^2_k(\mathcal{G})$ we have $\ker(\bb
T_{1,0})=\Span\{\mathbf{\widetilde{\Psi}}_{0,k}\}$, where
  \begin{equation}\label{Psi_0_log}
\mathbf{\widetilde{\Psi}}_{0,k}= \Big (\underset{\bf
1}{\tfrac{N-k}{k}\psi'_{0}},..., \underset{\bf
k}{\tfrac{N-k}{k}\psi'_{0}},\underset{\bf
k+1}{-\psi'_{0}},...,\underset{\bf N}{-\psi'_{0}} \Big ),
 \end{equation}
i.e. $\ker(\bb
T_{1,0}|_{L^2_k(\mathcal{G})})=\Span\{\mathbf{\widetilde{\Psi}}_{0,k}\}$.
\item[$(iii)$] The operator $\bb T_{1,0}$ has one simple negative
eigenvalue, i.e. $n(\bb T_{1,0})=1$. Moreover, the operator $\bb
T_{1,0}$ has one simple negative eigenvalue in
$L^2_k(\mathcal{G})$, i.e. $n(\bb
T_{1,0}|_{L^2_k(\mathcal{G})})=1$.
   \item[$(iv)$] The spectrum of $\bb T_{1,0}$ is discrete. 
  \end{itemize}
\end{theorem}

\begin{proof}[\bf Proof.]
The proof of items $(i)$-$(ii)$ repeats the one of Theorem
\ref{spect_L^0_1} $(i)$-$(ii)$.

$(iii)$
We will follow the ideas of the proof of item $(iii)$ of Theorem
\ref{spect_L^0_1} and Lemma 4.11 in \cite{A3}.
Denote
$\opt_0= ( (-\frac{d^2}{dx^2}+x^2-3 )\delta_{k,j} )$.
First, one needs to show that the operator $\bb T_0$ acting as $\bb
T_0=\opt_0$ on
\begin{equation*}
\dom(\bb T_0)= \Big \{\mathbf{V}\in W^2(\mathcal{G}):
v_1(0)=...=v_N(0)=0,\quad \sum\limits_{j=1}^N v_j'(0)=0  \Big \}.
\end{equation*}
is non-negative. The proof follows from the  identity   
\begin{equation*}
-v_j''+(x^2-3)v_j=
\frac{-1}{\psi'_{0}}\frac{d}{dx} \Big [(\psi'_{0})^2\frac{d}{dx}
 \Big (\frac{v_j}{\psi'_{0}} \Big ) \Big ],\quad
x> 0,
\end{equation*}
for any  $\mathbf{V}=(v_j)_{j=1}^N\in W^2(\mathcal{G})$.

Next we need to prove that $n_\pm(\bb T_0)=1$. We use the
ideas of the proof of \cite[Lemma 4.11]{A3}.
First, we establish the scale of Hilbert spaces associated with
the self-adjoint non-negative operator (see \cite[Section I,\S
1.2.2]{ak}) $\bb T=\opt_0+3 I$ defined on
$$
\dom(\bb T)= \Big \{\mathbf{V}\in W^2(\mathcal{G}):
v_1(0)=...=v_N(0),\quad \sum\limits_{j=1}^N v_j'(0)=0  \Big \}.
 $$
Define for $s\geq 0$ the space 
$$
\mathfrak H_s(\bb T)=\left\{\bb V\in L^2(\mathcal G): \|\bb
V\|_{s,2}=\Big\|(\bb T+ I)^{s/2}\bb V\Big\|<\infty\right\}.
$$
The space $\mathfrak H_s(\bb T)$ with norm $\|\cdot\|_{s,2}$ is
complete. The dual space of $\mathfrak H_s(\bb T)$ is denoted by
$\mathfrak H_{-s}(\bb T)=\mathfrak H_s(\bb T)'$. The norm in the
space $\mathfrak H_{-s}(\bb T)$ is defined by the formula
$ 
\|\bb V\|_{-s,2}=  \|(\bb T+ I)^{-s/2} \bb V \|.
$ 
The spaces $\mathfrak H_s(\bb T)$ form the following chain 
$$ 
...\subset \mathfrak H_2(\bb T)\subset \mathfrak H_1(\bb
T)\subset L^2(\mathcal G)=\mathfrak H_0(\bb T)\subset\mathfrak
H_{-1}(\bb T)\subset \mathfrak H_{-2}(\bb T)\subset...
$$

 The norm in the space 
$\mathfrak H_1(\bb T)$ can be calculated as follows
\begin{align*}
&\|\bb V\|^2_{1,2}=((\bb T+ I)^{1/2}\bb V, (\bb T+I)^{1/2}\bb
V)\\&=\sum\limits_{j=1}^N \int _0^\infty\left( |v'_j(x)|^2
+|v_j(x)|^2 +x^2|v_j(x)|^2\right)dx.
\end{align*}
Therefore, we have the embedding $\mathfrak H_1(\bb T)
\hookrightarrow H^1(\mathcal G)$ and, by the Sobolev embedding,
 $\mathfrak H_1(\bb T) \hookrightarrow L^\infty (\mathcal
G)$. From the former remark we obtain that the functional
$\delta_1: \mathfrak H_1(\bb T)\to \mathbb C$ acting as
$\delta_1(\bb V)=v_1(0)$ belongs to $\mathfrak H_1(\bb
T)'=\mathfrak H_{-1}(\bb T)$, and consequently $\delta_1\in
\mathfrak H_{-2}(\bb T)$. Therefore, using \cite[Lemma
1.2.3]{ak}, it follows that the restriction $\hat{\bb T}_0$ of
the operator $\bb T$ onto the domain
$$ 
\dom(\hat{\bb T}_0)=\{\bb V\in \dom(\bb T): \delta_1(\bb
V)=v_1(0)=0\}=\dom(\bb T_0)
$$ 
is a densely defined symmetric operator with equal deficiency
indices $n_{\pm}(\hat{\bb T}_0)=1$. By \cite[Chapter IV, Theorem
6]{Nai67}, the operators $\hat{\bb T}_0$ and $\bb T_0$ have equal
deficiency indices. Therefore, $n(\bb T_{1,0})\leq 1$. Since $\bb
T_{1,0}\bb \Psi_0=-2\bb \Psi_0,$ where $\bb
\Psi_0=(\psi_0)_{j=1}^N$, we get $n(\bb T_{1,0})=1$. As
$\bb \Psi_0\in L^2_k(\mathcal{G})$ for any $k$, we get $n(\bb
T_{1,0}|_{L^2_k(\mathcal{G})})=1$.

$(iv)$
With slight modifications we can repeat the proof of
\cite[Theorem 3.1, Chapter II]{BerShu91} to show that the
spectrum of $\bb T_{1,0}$ is discrete by $\lim\limits_{x\to
+\infty}(x^2-3)=+\infty,$
i.e. $\sigma(\bb T_{1,0})=\sigma_p(\bb
T_{1,0})=\{\mu_{0,j}\}_{j\in\mathbb{N}}$. In particular, we have
the following distribution of the eigenvalues
$ 
\mu_{0,1}<\mu_{0,2}<\cdot\cdot\cdot<\mu_{0,j}<\cdot\cdot\cdot,
$ 
with $\mu_{0,j}\to +\infty$ as $j\to +\infty$.
 \end{proof} 
 
\begin{proposition}\label{spect_log_delta}
Let $k\in\left\{1,...,N-1\right\}$, $\alpha\neq 0$, and $\bb
\Psi_{\alpha,\delta}$ be defined by \eqref{Phi_vect_log}. Then
\begin{itemize}
\item[$(i)$] $\ker(\bb
T_{2,\alpha})=\Span\{\mathbf{\Psi}_{\alpha,\delta}\}$ and $\bb
T_{2,\alpha}\geq 0$,
\item[$(ii)$] $\ker(\bb T_{1,\alpha})=\{\mathbf{0}\}$,
\item[$(iii)$] for $\alpha>0$, $n(\bb T_{1,\alpha})=2$ in
$L^2_k(\mathcal{G})$, i.e. $n(\bb
T_{1,\alpha}|_{L^2_k(\mathcal{G})})=2$,
\item[$(iv)$] for $\alpha<0$, $n(\bb T_{1,\alpha})=1$ in
$L^2(\mathcal{G})$,
\item[$(v)$] the spectrum of the operators $\bb T_{1,\alpha}$ and
$\bb T_{2,\alpha}$ in $L^2(\mathcal{G})$ is discrete.
\end{itemize}
\end{proposition}

\begin{proof}[\bf Proof.]
 
$(i)$ The proof repeats the one of \cite[Proposition
6.1]{AdaNoj14}. We only need to note that any
$\mathbf{V}=(v_j)_{j=1}^N\in W^2(\mathcal{G})$ satisfies the
following identity
\begin{equation*}
-v_j''+((x-\tfrac{\alpha}{N})^2-1)v_j=
\frac{-1}{\psi_{\alpha,\delta}}\frac{d}{dx}
 \Big [\psi_{\alpha,\delta}^2\frac{d}{dx}
  \Big (\frac{v_j}{\psi_{\alpha,\delta}} \Big ) \Big ],\quad
x>0.
\end{equation*}

$(ii)$
The proof is standard. It is sufficient to note that any vector
from the kernel of $\bb T_{1,\alpha}$ has the form $\bb
V=(v_j)_{j=1}^N,$ where $v_j=c_j\psi'_{\alpha,\delta}\,\,
c_j\in\mathbb{R}$.

$(iii)$
The proof of this item is analogous to the one of the item
$(iii)$ of Proposition \ref{n(L_1)}. It suffices to note that for
the operator $\bb T_{1,\alpha}$ the coefficient $\mu_0$ in
decomposition \eqref{decomp1} is negative. Indeed, (see the proof
of Proposition 4.17 in \cite{A3})
$$
\mu_0=-\frac{2(N-k)}{k||\widetilde{\bb \Psi}_{0,k}||^2} \int _0^\infty x(\psi'_0)^2dx+O(\alpha),
 $$
where $\widetilde{\bb \Psi}_{0,k}$ is defined by
\eqref{Psi_0_log}.

$(iv)$
To show the equality in the whole space $L^2(\mathcal{G})$, we
need to repeat the arguments of the proof of Theorem
\ref{spect_T^0_1}-$(iii)$ (i.e. $\bb T_{1,0}$ has to be replaced
by $\bb T_{1,\alpha}$, and $\bb \Psi_0$ by $\bb
\Psi_{\alpha,\delta}$).

$(v)$
The proof follows from \cite[Chapter II, Theorem 3.1]{BerShu91}.\end{proof}

\noindent\textbf{Proof of Theorem \ref{stabil_log_delta}.}
Combining Theorem \ref{well_pos_log}, Theorem
\ref{stabil_graph_log}, Proposition \ref{spect_log_delta}, we get
orbital stability of $e^{i\omega t}\mathbf{\Psi}_{\alpha,\delta}$
in $W^1_{\mathcal{E}}(\mathcal{G})$ for $\alpha<0$ and spectral
instability of $e^{i\omega t}\mathbf{\Psi}_{\alpha,\delta}$ for
$\alpha>0$.
\qed

\subsection{The NLS-log-$\delta'$ equation on a star graph}

In this Subsection we study the stability properties for the
N-tail profile
$\bb{\Psi}_{\lambda,\delta'}=(\psi_{\lambda,\delta'})_{j=1}^N$,
where
$
\psi_{\lambda,\delta'}=
e^{\tfrac{\omega+1}{2}}e^{-\tfrac{(x-\tfrac{N}{\lambda})^2}{2}},
$
$\lambda<0,\,\, \omega\in \mathbb R.
$
Similarly to \cite[Proposition 1.1]{Ard16}, we get the
well-posedness result in $W(\mathcal{G})$.
 
\begin{proposition}\label{well_log_graph'} For any $\bb U_0\in
W(\mathcal{G})$ there is a unique solution $\bb U \in
C(\mathbb{R}, W(\mathcal{G}))\cap C^1(\mathbb{R},
W'(\mathcal{G}))$ of \eqref{NLS_graph_log2} such that $\bb U(0) =
\bb U_0$ and $\sup\limits_{t\in\mathbb{R}}||\bb
U(t)||_{W(\mathcal{G})} <\infty$. Furthermore, the conservation
of energy and charge holds, that is,
$$E_{\lambda,\log}(\bb U(t)) = E_{\lambda,\log}(\bb U_0),\,\,\,
\text{and}\,\,\, Q(\bb U(t))=||\bb U(t)||^2=||\bb U_0||^2,$$
where the energy $E_{\lambda,\log}$ is defined for
$\mathbf{V}=(v_j)_{j=1}^N\in W(\mathcal{G})$ by
 \begin{equation*} 
E_{\lambda,\log}(\bb V)=\tfrac 1{2}||\bb V'||^2-\tfrac
1{2}\sum\limits_{j=1}^N \int _0^\infty|v_j|^2\logg|v_j|^2dx
+\tfrac{1}{2\lambda} \Big |\sum\limits_{j=1}^Nv_j(0) \Big |^2.
\end{equation*}
\end{proposition}

\begin{proof}[\bf Proof.]
The proof repeats the one of \cite[Proposition 1.1]{Ard16}. One
just needs to replace $\mathfrak{F}_\gamma[u]=\sum\limits_{j=1}^N
\int _{\mathbb{R}_+}|u'_j|^2dx -\gamma |u_1(0)|^2$ by
$\sum\limits_{j=1}^N \int
_{\mathbb{R}_+}|u'_j|^2dx+\tfrac{1}
{\lambda} \Big |\sum\limits_{j=1}^Nu_j(0) \Big |^2$.
We also refer the reader to \cite[Section 9.2]{Caz03}.
\end{proof}
Using the above result, one may show the well-posedness in
$W^1(\mathcal{G})$.
\begin{theorem}\label{well_pos_log'}
If $\bb U_0\in W^1(\mathcal{G})$, there is a unique solution $\bb
U(t)$ of \eqref{NLS_graph_log2} such that $\bb U(t)\in C(\mathbb
R, W^1(\mathcal{G}))$ and $\bb U(0)=\bb U_0$.
\end{theorem}

\begin{proof}[\bf Proof.]
One should repeat the proof of  
\cite[Theorem 2.2]{AngGol18} 
substituting $W^1_{\mathcal{E}}(\mathcal{G})$ by 
$W^1(\mathcal{G})$. 
\end{proof} 
 
Consider the action functional associated with 
equation \eqref{NLS_graph_log2} for 
$ \bb V\in W^1(\mathcal{G})$,
\begin{equation*}
\label{act_log'} 
\bb S_{\lambda,\log}(\bb V) \! = \! 
\tfrac 1{2}||\bb V'||^2+\tfrac{(\omega+1)}{2}||\bb V||^2 -\tfrac
1{2}
\sum\limits_{j=1}^N \int _0^\infty 
\!\!
|v_j|^2\logg|v_j|^2dx
+\tfrac{1}{2\lambda} \Big |\sum\limits_{j=1}^Nv_j(0) \Big |^2.
\end{equation*}
As above, our idea is to study the spectral properties of the
self-adjoint operators associated with $(\bb
S_{\lambda,\log})''(\bb \Phi_{\lambda,\delta'})$
 \begin{align*} 
\mathbf{T}_{1,\lambda}&= \Big (\Big(-\frac{d^2}{dx^2}+(x-\tfrac{N}{\lambda})^2-3\Big)\delta_{k,j} \Big ),
\\
\mathbf{T}_{2,\lambda}&= \Big (\Big(-\frac{d^2}{dx^2}+
(x-\tfrac{N}{\lambda})^2-1\Big)\delta_{k,j} \Big ),
\end{align*} 
acting on $\dom(\mathbf{T}_{1,\lambda})=\dom(\mathbf{T}_{2,\lambda})=\bb D_{\lambda,\delta'}^{\log}$, where
 \begin{equation*} 
\bb D_{\lambda,\delta'}^{\log}:=
  \Big \{\mathbf{V}\in W^2(\mathcal{G}): 
   v'_1(0)=...=v'_N(0),
    \quad \sum\limits_{j=1}^N  v_j(0)=\lambda v'_1(0)  \Big \}.
\end{equation*}
Using arguments from the proof of Proposition
\ref{spect_log_delta} and Proposition \ref{spec_graph'}, we can
show the following result.

\begin{proposition}\label{spec_T_lambda} Let $k\in\{1,...,
N-1\}$, $\lambda<0$, and $\bb{\Psi}_{\lambda,\delta'}$ be defined
by \eqref{prof_log'}. Then the following assertions hold.
\end{proposition}
\begin{itemize}
\item[$(i)$]
$\ker(\mathbf{T}_{2,\lambda})=\Span\{\mathbf{\Psi}_{\lambda,\delta'}\}$,
and $\mathbf{T}_{2,\lambda}\geq 0$.
\item[$(ii)$] If $-N<\lambda<0$, then
$\ker(\mathbf{T}_{1,\lambda})=\{\mathbf{0}\}$, and
$n(\mathbf{T}_{1,\lambda})=1$ in $L^2(\mathcal G)$.
  
\item[$(iii)$] If $\lambda = -N$, then
$n(\mathbf{T}_{1,\lambda})=1$, and the kernel of
$\mathbf{T}_{1,\lambda}$ is given by
$\ker(\mathbf{T}_{1,\lambda})=\Span\{\hat{\mathbf{\Psi}}_{\lambda,1},..,
\hat{\mathbf{\Psi}}_{\lambda,N-1}\}$, where
\begin{equation*}
\hat{\mathbf{\Psi}}_{\lambda,j}=
(0,..,0,\underset{\bf j}{\psi'_{-N,\delta'}},
\underset{\bf j+1}{-\psi'_{-N,\delta'}},0,..,0). 
 \end{equation*}
 In particular, in this case 
 $n(\mathbf{T}_{1,\lambda}|_{L^2_k(\mathcal{G})})=1$, 
 and 
 $\ker(\mathbf{T}_{1,\lambda}|_{L^2_k(\mathcal{G})})
 =\Span\{\widetilde{\bb \Psi}_{-N,k}\}$, 
 where
  $$
  \widetilde{\bb \Psi}_{-N,k}
  =
\ \Big (\underset{\bf 1}{\tfrac{N-k}{k}\psi'_{-N,\delta'}},...,   \underset{\bf k}{\tfrac{N-k}{k}\psi'_{-N,\delta'}},
   \underset{\bf k+1}{-\psi'_{-N,\delta'}},...,
   \underset{\bf N}{-\psi'_{-N,\delta'}} \Big ).
   $$
  
\item[$(iv)$] If $\lambda < -N$, then
$\ker(\mathbf{T}_{1,\lambda})=\{\mathbf{0}\}$, and
$n(\mathbf{T}_{1,\lambda}|_{L^2_k(\mathcal{G})})=2$.
\item[$(v)$] The spectrum of $\mathbf{T}_{1,\lambda}$ and
$\mathbf{T}_{2,\lambda}$ is discrete.
\end{itemize}

\begin{proof}[\bf Proof.]
 
 $(i)$
The proof is analogous to the one of item $(i)$ of Proposition
\ref{spec_graph'}.

$(ii)$
The proof repeats the one of item $(ii)$ of Proposition
\ref{spec_graph'}. We only need to note that
 the non-negative (for $-N<\lambda<0$) symmetric operator 
 \begin{align*} 
&\mathbf{T}'_0= \Big (\Big(-\frac{d^2}{dx^2}+(x-\tfrac{N}{\lambda})^2-3\Big)\delta_{k,j} \Big ),\\
&\dom(\mathbf{T}'_0)= \Big \{\mathbf{V}\in W^2(\mathcal{G}):
v'_1(0)=...=v'_N(0)=0, \quad\sum\limits_{j=1}^N v_j(0)=0
 \Big \}.
\end{align*}
has deficiency indices equal one. It can be shown repeating the
arguments of the proof of item $(iii)$ of Theorem
\ref{spect_T^0_1}.

$(iii)$
It suffices to repeat the arguments of the proof of item $(iii)$
of Proposition \ref{spec_graph'}.

$(iv)$ 
By the analyticity of the family $(\bb T_{1,\lambda})$ as 
a function of $\lambda<0$ and  the spectral properties of 
$\bb T_{1,\lambda}$, for  $\lambda=-N$,  
we obtain (via the Kato-Rellich Theorem):
 \begin{enumerate}
 \item[1)] There  exist $\delta>0$ small and 
 two analytic functions $\mu(\lambda) : 
 (-N-\delta, -N+\delta )\to \mathbb R$ and 
 $\bb F(\lambda): (-N-\delta, -N+\delta)\to 
 L^2_k(\mathcal{G})$ such that $\mu(-N)=0$ and 
 $\bb F(-N)= \widetilde{\bb \Psi}_{-N,k}$.
 
\item[2)] $\mu(\lambda)$ is a simple isolated  
eigenvalue of $\bb T_{1,\lambda}$, and 
$\bb F(\lambda)$ is an associated eigenvector for $\mu(\lambda)$.
\item[3)] Except  at most the first two 
eigenvalues, the spectrum of 
$\bb T_{1,\lambda}|_{L^2_k(\mathcal{G})}$ is positive.
 \end{enumerate}

Below we show  that $\mu(\lambda)<0$ for 
$\lambda < -N$, and $\mu(\lambda)>0$ for 
$\gamma> -N$. From Taylor's theorem we have the following
expansions
\begin{align}
\label{2decomp1} 
\mu(\lambda)&=\mu_{-N}(\lambda+N)+ O((\lambda+N)
^2),\,\,\text{and}\\
\bb F(\lambda)&=\widetilde{\bb \Psi}_{-N,k}+ (\lambda+N) \bb
G_{-N} + \bb O((\lambda+N)^2),
\notag
\end{align}
where  $\mu_{-N}=\mu'(-N)\in \mathbb R$  and 
$\bb G_{-N}=\partial_{\lambda}\bb F(\lambda)|_{\lambda=-N}\in
L^2_k(\mathcal{G})$.

Let us show that $\mu_{-N}>0$. To show the positivity of
$\mu_{-N}$, we compute $(\bb T_{1,\lambda} \bb F(\lambda),
\widetilde{\bb \Psi}_{-N,k})$ in two different ways.
Since $\bb T_{1,\lambda} \bb F(\lambda)=\mu(\lambda)\bb
F(\lambda) $, it follows from \eqref{2decomp1} that
\begin{equation}\label{2decomp5}
\begin{aligned}
(\bb T_{1,\lambda} \bb F(\lambda), \widetilde{\bb
\Psi}_{-N,k})=\mu_{-N}(\lambda+N)\|\widetilde{\bb
\Psi}_{-N,k}\|^2 +O((\lambda+N)^2).
\end{aligned}
\end{equation}
By $\bb T_{1,-N}\widetilde{\bb \Psi}_{-N,k}=\bb 0$,  we obtain
\begin{equation}\label{produ2}
\bb T_{1,\lambda}\widetilde{\bb
\Psi}_{-N,k}=\left(-2x\tfrac{N+\lambda}{\lambda}+\tfrac{N^2-\lambda^2}{\lambda^2}\right)\widetilde{\bb
\Psi}_{-N,k}.
 \end{equation}
Since $\bb T_{1,\lambda}$ is self-adjoint, we obtain from
\eqref{2decomp1} and \eqref{produ2}
\begin{align}
\label{pro3} 
&(\bb T_{1,\lambda} \bb F(\lambda), \widetilde{\bb
\Psi}_{-N,k})=(\bb F(\lambda), \bb T_{1,\lambda}\widetilde{\bb
\Psi}_{-N,k})\\&=\left(\widetilde{\bb
\Psi}_{-N,k},\left[-2x\tfrac{N+\lambda}{\lambda}+\tfrac{N^2-\lambda^2}{\lambda^2}\right]\widetilde{\bb
\Psi}_{-N,k}\right)+ O((\lambda+N)^2).
\notag
\end{align}
Combination of \eqref{2decomp5} and \eqref{pro3} leads to 
\begin{equation}\label{2decomp10}
\mu_{-N}\|\widetilde{\bb \Psi}_{-N,k}\|^2=\left(\widetilde{\bb
\Psi}_{-N,k},\left[-\tfrac{2}{\lambda}x+\tfrac{N-\lambda}{\lambda^2}\right]\widetilde{\bb
\Psi}_{-N,k}\right)+ O(\lambda+N).
\end{equation}
Define $g(\lambda):= \left(\widetilde{\bb
\Psi}_{-N,k},\left[-\tfrac{2}{\lambda}x+\tfrac{N-\lambda}{\lambda^2}\right]\widetilde{\bb
\Psi}_{-N,k}\right)$, then
$$
g(\lambda)= \tfrac{(N-k)N}{k} \int _0^\infty
\left[-\tfrac{2}{\lambda}x+\tfrac{N-\lambda}{\lambda^2}\right](\psi'_{-N,\delta'})^2dx.
$$
By Taylor's theorem, $g(\lambda)=g(-N)+
g'(-N)(\lambda+N)+O((\lambda+N)^2)$. It is easily seen that
$$g(-N)=2e^{\omega+1}\tfrac{N-k}{k} \int
_0^\infty(x+1)^3e^{-(x+1)^2}dx>0.$$
From \eqref{2decomp10}, we get  
\begin{equation*}\label{decomp11}
\mu_{-N}=\frac{g(\lambda)}{\|\widetilde{\bb
\Psi}_{-N,k}\|^2}+O(\lambda+N)=\frac{g(-N)}{\|\widetilde{\bb
\Psi}_{-N,k}\|^2}+O(\lambda+N),
\end{equation*}
and consequently  $\mu_{-N}>0$ for $\lambda$ close to $-N$.

Let $\lambda$ be  close to $-N$ and $\lambda<-N$, 
then from item $(iii)$ and the analysis above 
($\mu(\lambda)<0$) it follows that  
$n(\bb T_{1,\lambda}|_{L^2_k(\mathcal{G})})=2$. 
Finally, by the continuation argument 
(see  item $(iii)$ of Proposition \ref{n(L_1)}), 
we extend the former property for all $\lambda<-N$.

$(iv)$
To prove the last spectral property it is sufficient 
to note that the spectrum of $\mathbf{T}_{1,\lambda}$ and 
$\mathbf{T}_{2,\lambda}$  is discrete due to the growth 
of $q(x)=(x-\tfrac{N}{\lambda})^2$ as $x\to+\infty.$
\end{proof}

\noindent\textbf{Proof of Theorem \ref{Main_log}.}
Combining Theorem \ref{well_log_graph'}, 
Proposition \ref{spec_T_lambda}, 
Theorem \ref{stabil_graph_log} 
(adapted to the case of the NLS-log-$\delta'$ equation),  
we get orbital stability of  
$e^{i\omega t}\bb\Psi_{\lambda,\delta'}$  in 
$W^1(\mathcal{G})$ for $-N<\lambda<0$.  
Spectral instability of $e^{i\omega t}\bb\Psi_{\lambda,\delta'}$ follows  for $\lambda<-N$.
\qed

\section{Applications to  other models}\label{line}
In the above sections the use of the extension theory of
symmetric operators was essential for the estimates of the Morse
index of the specific self-adjoint Schr\"odinger operators. In
this Section we show how this approach can be applied to the case
of the nonlinear Schr\"odinger equations with specific point
interactions on the line. In particular, we reprove in concise
form (avoiding the use of variational techniques) some stability
results for these equations established recently by the other
authors (see \cite{AdaNoj13a, CozFuk08, FukOht08, ghw}).

  \subsection{The NLS with point  interactions on the line }

In the scalar case the family of self-adjoint boundary conditions
for \eqref{NLS0} at $x=0$ is formally defined by
\begin{equation}\label{bc1}
\left(\begin{array}{c}\psi(0+) \\
\psi'(0+)\end{array}\right)=\tau
\left(\begin{array}{cc} a & b\ \\c &
d\end{array}\right)\left(\begin{array}{c} \psi(0-) \\
\psi'(0-)\end{array}\right),
\end{equation}
with $a,b,c,d$ and $\tau$ satisfying the conditions (see
\cite[Theorem 3.2.3]{ak} or formula (K.1.2) from \cite[Appendix
K]{AlbGes05})
\begin{equation}\label{para}
\{a,b,c,d\in \mathbb R,\ \tau\in \mathbb C: ad-bc=1,\ |\tau
|=1\}.
\end{equation}
Parameters \eqref{bc1}
label the self-adjoint extensions of the closable symmetric
operator $H_0=-\frac{d^2}{dx^2}$ defined, for instance, on the
space $C_0^{\infty}(\mathbb R\setminus\{0\})$.

We are interested in two specific choices of the parameters in
\eqref{para}, which are relevant in physical applications (see
\cite{AdaNoj13a,CMR}). The first choice $\tau=a=d=1$, $b=0$,
$c=-\gamma, \gamma\in\mathbb{R}\setminus\{0\}$ corresponds to the
$\delta$-interaction of strength $-\gamma$ which gives rise to
the following NLS-$\delta$ model
\begin{equation}\label{NLS}
i\partial_t u-H_\gamma^\delta u + |u|^{p-1}u=0.
\end{equation}
Here $H_\gamma^\delta$ is the self-adjoint operator on
$L^2(\mathbb{R})$ acting as  $(H_\gamma^\delta v)(x)=-v''(x),$ for $x\neq 0,$ on the domain
$\dom(H_\gamma^\delta)=D_{\gamma,\delta}$, where
\begin{align*}\label{H_delta} 
D_{\gamma,\delta}:=\big \{v\in H^1(\mathbb R)\cap H^2(\mathbb
R\setminus\{0\}): v'(0+)-v'(0-)=-\gamma v(0)\big \}.
\end{align*}
The operator $H_\gamma^\delta$ is formally defined by the
expression $l_\gamma^\delta=-\frac{d^2}{dx^2}-\gamma \delta(x)$,
where $\delta(x)$ is the Dirac delta distribution.

The second choice of parameters $\tau=a=d=1$, $c=0$, $b=-\beta,
\beta\in\mathbb{R}\setminus\{0\}$ corresponds to the case of
so-called $\delta'$-interaction of strength $-\beta$. It gives
rise to the following model (NLS-$\delta'$ henceforth)
\begin{equation}\label{NLS_delta'}
i\partial_t u-H_\beta^{\delta'} u +|u|^{p-1}u=0,
\end{equation}
in which $H_\beta^{\delta'}$ is the self-adjoint operator on
$L^2(\mathbb{R})$ acting as  $(H_\beta^{\delta'} v)(x)=-v''(x),$ for  $x\neq 0,$ on the domain $\dom(H_\beta^{\delta'})=D_{\beta,
\delta'}$, where
\begin{align*}\label{H_delta'} 
D_{\beta, \delta'}:=\{v\in H^2(\mathbb R\setminus\{0\}):
v(0+)-v(0-)=-\beta v'(0),\,\,v'(0+)=v'(0-)\}.
\end{align*}
Recall that $H_\beta^{\delta'}$ is formally defined by the
expression $l_\beta^{\delta'}=-\frac{d^2}{dx^2}-\beta
\langle\cdot,\delta'\rangle\delta'(x)$.

The NLS-$\delta$ model has been extensively studied in the last
decade (see \cite{Ang, AP, CM, CMR, CozFuk08, FukJea08, FukOht08,
ghw, HMZ1,oh} and reference therein). The NLS-$\delta'$ model is less
studied, in \cite{AdaNoj13a, AdaNoj13} the authors investigated
variational properties and the orbital stability of the ground
states of the NLS-$\delta'$ equation with the repulsive $\delta'$-interaction
($\beta>0$).

\subsection{The NLS-$\delta'$ equation on the line}

As above the existence of standing wave solutions
$u(t,x)=e^{i\omega t}\varphi_(x)$ of equation \eqref{NLS_delta'}
requires that the profile $\varphi\in D_{\beta, \delta'}$
satisfies the semi-linear elliptic equation
\begin{equation}\label{ellip_delta'}
H_\beta^{\delta'}\varphi+\omega\varphi-|\varphi|^{p-1}\varphi=0.
\end{equation}
It was shown in \cite{AdaNoj13a} that for $\beta>0$ equation
\eqref{ellip_delta'} has two types of solutions (odd and
asymmetric)
 \begin{equation}\label{varphi1}
\varphi_{\omega,\beta}^{odd}(x)=\sign(x)
 \Big [\frac{(p+1)\omega}{2}
\sech^2 \Big (\frac{(p-1)\sqrt{\omega}}{2}(|x|+y) \Big ) \Big ]^{\frac{1}{p-1}},
 \end{equation}
 with $x\neq 0$ and $\omega>\tfrac4{\beta^2}$,
  \begin{gather*}
\varphi_{\omega,\beta}^{as}(x)= \left\{
                    \begin{array}{ll}
\Big[\frac{(p+1)\omega}{2}
\sech^2\Big(\frac{(p-1)\sqrt{\omega}}{2}(x+y_1)\Big)\Big]^{\frac{1}{p-1}},
& \hbox{$x>0;$} \\
- \Big[\frac{(p+1)\omega}{2}
\sech^2\Big(\frac{(p-1)\sqrt{\omega}}{2}(x-y_2)\Big)\Big]^{\frac{1}{p-1}},
& \hbox{$x<0$,}
                    \end{array}
\right., \omega>\tfrac4{\beta^2}\tfrac{p+1}{p-1},\label{varphi2}                  \end{gather*}              
where $y,\ y_1$ and $y_2$ are positive constants depending on
$\beta, p, \omega$ (see \cite[Theorem 5.3]{AdaNoj13a}). Moreover,
in \cite{AdaNoj13a, AdaNoj13} were established the following
stability results.
The standing wave $e^{i\omega t}\varphi^{odd}_{\omega,\beta}$ is
stable in $H^1(\mathbb R\setminus\{0\})$ for
$p>1$, $\omega\in (\tfrac4{\beta^2},\tfrac4{\beta^2}\tfrac{p+1}{p-1} )$,
and unstable in $H^1(\mathbb R\setminus\{0\})$ for $p>1$, 
$\omega>\tfrac4{\beta^2}\tfrac{p+1}{p-1}$. The
standing wave $e^{i\omega t}\varphi^{as}_{\omega,\beta}$ is
stable in $H^1(\mathbb R\setminus\{0\})$ for $1<p\leq 5$, 
$\omega>\tfrac4{\beta^2}\tfrac{p+1}{p-1}$, and $p>5$, 
$\omega\in (\tfrac4{\beta^2}\tfrac{p+1}{p-1},
\omega_1 )$, meanwhile $e^{i\omega
t}\varphi^{as}_{\omega,\beta}$ is unstable in $H^1(\mathbb
R\setminus\{0\})$ for $p>5$,  $\omega>\omega_2>\omega_1.$

In what follows, we will use the notation
$\varphi_{\beta}=\varphi^{odd}_{\omega,\beta}$. Due to Grillakis,
Shatah and Strauss approach, we need to study the spectral
properties of the following two self-adjoint operators
\begin{align*} \label{linear_delta'} 
&L_{1,\beta}=-\frac{d^2}{dx^2}+\omega-p|\varphi_{\beta}|^{p-1},\;\;\;\;
L_{2,\beta}=-\frac{d^2}{dx^2}+\omega-|\varphi_{\beta}|^{p-1},\\
&\dom(L_{j,\beta})=D_{\beta, \delta'},\,\,\, j\in\{1,2\}.
\end{align*}
The operators $L_{1,\beta}$ and $L_{2,\beta}$ are associated with
the action functional
$$
S_\beta(\psi)=\tfrac 1{2}||\psi'||^2+\tfrac{\omega}{2}||\psi||^2
-\tfrac
1{p+1}||\psi||_{p+1}^{p+1}-\tfrac{1}{2\beta}|\psi(0+)-\psi(0-)|^2,$$
defined on  $H^1(\mathbb R\setminus\{0\})$,
in the following sense:
$$
(S_\beta)''(\varphi_{\beta})(u,v)=(L_{1,\beta}u_1,
v_1)+(L_{2,\beta}u_2, v_2),$$
where $u=u_1+iu_2$ and $v=v_1+iv_2$. 

The well-posedness for
\eqref{NLS_delta'} in $H^1(\mathbb R\setminus\{0\})$ was
established in \cite[Proposition 3.3]{AdaNoj13a}. Moreover, it
was shown that $\ker(L_{2,\beta})=\Span\{\varphi_{\beta}\}$, and
$\ker(L_{1,\beta})=\{0\}$, and the sign of
$\partial_\omega||\varphi_{\beta}||^2$ was computed.

The following result on the Morse index of $L_{1,\beta}$ was
proved in \cite{AdaNoj13a} via variational approach. We propose
an alternative proof in the framework of the extension theory.

\begin{proposition}\label{neg_delta'}
Let $\beta>0$  and $\omega>\frac{4}{\beta^2}$. 
Then 
\begin{itemize}
\item[$(i)$]
$n(L_{1,\beta})=1$ for $\omega \in (\frac{4}{\beta^2},
\tfrac4{\beta^2}\tfrac{p+1}{p-1} ]$,
\item[$(ii)$] $n(L_{1,\beta})=2$ for $\omega
\in (\tfrac4{\beta^2}\tfrac{p+1}{p-1}, \infty)$.
\end{itemize} 
\end{proposition}

\begin{proof}[\bf Proof.]

It is easily seen that $L_{1,\beta}$ is the self-adjoint
extension of the symmetric operator $L_{\min}$ defined by
\begin{equation}\label{L_min'}
L_{\min}=-\frac{d^2}{dx^2}+\omega-p|\varphi_{\beta}|^{p-1},\quad\dom(L_{\min})=\{v\in H^2(\mathbb{R}): v(0)=v'(0)=0\}.
\end{equation}
Since $\varphi_{\beta}\in L^{\infty}(\mathbb{R})$, we obtain
$\dom(L_{\min}^*)=H^2(\mathbb{R}\setminus\{0\})$.
Moreover, the operator $L_{\min}$ is non-negative for $\beta>0$.
Indeed, it is easy to verify that for $\beta>0$ and $v\in
H^2(\mathbb{R}\setminus \{0\})$ the following identity holds
\begin{equation}\label{identity'}
-v''+\omega v-p|\varphi_{\beta}|^{p-1}v=
\frac{-1}{\varphi'_{\beta}}\frac{d}{dx} \Big [(\varphi'_{\beta})^2\frac{d}{dx} \Big (\frac{v}{\varphi'_{\beta}} \Big ) \Big ],\quad
x\neq 0.
\end{equation}
 Using \eqref{identity'} and integrating by parts, we get
 \begin{align}
  \label{nonneg1'} 
(L_{\min} v,v)=
\Big(\int
_{-\infty}^{0-}+ 
\int
^{\infty}_{0+}\Big) (\varphi'_{\beta})^2 \Big [\frac{d}{dx} \Big (\frac{v}{\varphi'_{\beta}} \Big ) \Big ]^2dx +
 \Big [v'{v}-v^2\frac{\varphi''_{\beta}}{\varphi'_{\beta}} \Big ]_{0-}^{0+}.
\end{align}
The integral terms in \eqref{nonneg1'} are non-negative. Due to
the conditions $v(0)=v'(0)=0$, non-integral term vanishes, and we
get $L_{\min}\geq 0$. Note that
$$
\dom(L^*_{\min})=H^2(\mathbb{R}\setminus
\{0\})=\dom(L_{\min})\oplus\Span\{v_i^1,v_i^2\}\oplus\Span\{v_{-i}^1,v_{-i}^2\},
$$
where
\begin{equation*}
v^1_{\pm i}=\left\{
                     \begin{array}{ll}
                       e^{i\sqrt{\pm i}x} & \hbox{$x>0$;} \\
                       0 & \hbox{$x<0$.}
                     \end{array}
                   \right.,\quad v^2_{\pm i}=\left\{
                     \begin{array}{ll}
                            0 & \hbox{$x>0$;}\\
e^{-i\sqrt{\pm i}x} & \hbox{$x<0$.}
                     \end{array}
\right. , \quad \Im(\sqrt{\pm i})>0.\end{equation*}Indeed, due to the fact that $\varphi_{\beta}\in
L^\infty(\mathbb{R})$, we get $\dom(L^*_{\min})=\dom(L^*),$
where
 $$L=-\frac{d^2}{dx^2},\quad \dom(L)=\dom(L_{\min}).$$
Moreover, $n_\pm(L_{\min})=n_\pm(L)=2.$ 
Since $L_{1,\beta}$ is the self-adjoint extension of the
non-negative symmetric operator $L_{\min}$ and
$n_{\pm}(L_{\min})=2$, by Proposition \ref{semibounded},
$n(L_{1,\beta})\leq 2$.
Otherwise, we obtain from \eqref{ellip_delta'} that
$(L_{1,\beta}\varphi_{\beta},\varphi_{\beta})<0$, and therefore
$n(L_{1,\beta})\geq 1$. Thus, we get $1\leq n(L_{1,\beta})\leq
2$.

 $(i)$ 
Note that $L_{1,\beta}$ is the self-adjoint extension of the
following symmetric operator
\begin{equation*} 
 L'_{0}=-\frac{d^2}{dx^2}+\omega-p|\varphi_{\beta}|^{p-1},\quad
\dom(L'_{0})=\left\{v\in H^2(\mathbb{R}):\, v'(0)=0\right\}.
\end{equation*}

Let us show that $L'_{0}\geq 0$. Using \eqref{identity'} and
integrating by parts,
\begin{align}
\label{nonneg} 
(L'_0v,v)&=
\Big(\int
_{-\infty}^{0-}+\int
^{\infty}_{0+}\Big) (\varphi'_{\beta})^2 \Big [\frac{d}{dx} \Big (\frac{v}{\varphi'_{\beta}} \Big ) \Big ]^2dx+
 \Big [v'{v}-v^2\frac{\varphi''_{\beta}}{\varphi'_{\beta}} \Big ]_{0-}^{0+}.
\end{align}

The integral terms in \eqref{nonneg} are non-negative. Let us
focus on the non-integral term. Due to the conditions $v'(0)=0,
v(0+)=v(0-)$, and formula \eqref{varphi1}, we deduce
\begin{align*}  
 \Big [v'{v}-v^2\frac{\varphi''_{\beta}}{\varphi'_{\beta}} \Big ]_{0-}^{0+}&=
- \Big [v^2\frac{\varphi''_{\beta}}{\varphi'_{\beta}} \Big ]_{0-}^{0+}=v^2(0)\frac{\varphi''_{\beta}(0-)-\varphi''_{\beta}(0+)}{\varphi'_{\beta}(0-)}\\
&=-v^2(0)\frac{\beta\omega}{2} \Big (p-1-(p+1)\frac{4}{\beta^2\omega} \Big )\geq0. 
\end{align*}
The last inequality follows from
$\omega\leq\tfrac4{\beta^2}\tfrac{p+1}{p-1}$.

Using arguments numerously repeated above, we get
 $$\dom((L'_{0})^*)=\left\{v\in H^2(\mathbb{R}\setminus\{0\}):\,
v'(0+)=v'(0-)\right\}=\dom(L'_{0})\oplus\Span\{v_i\}\oplus\Span\{v_{-i}\},$$
where 
$$ 
v_{\pm i}=\left\{
                          \begin{array}{ll}
e^{i\sqrt{\pm i}x} & \hbox{$x>0$,} \\- e^{-i\sqrt{\pm i}x} & \hbox{$x<0$,}                          \end{array}
                        \right., \quad \Im(\sqrt{\pm i})>0.
$$                       
Then $n_\pm(L'_0)=1$, and by Proposition \ref{semibounded}, we obtain $n(L_{1,\beta})\leq 1$,
and finally $n(L_{1,\beta})=1$.

 $(ii)$  
The quadratic form of the operator $L_{1,\beta}$ is defined on
$H^1(\mathbb R\setminus\{0\})$ by $$
F_{1,\beta}(u)=||u'||^2+\omega||u||^2-p(|\varphi_{\beta}|^{p-1}u,u)-\tfrac
1{\beta}|u(0+)-u(0-)|^2.$$
Noting that $\varphi'_{\beta}(0+)=\varphi'_{\beta}(0-)$ and
integrating by parts, we get
\begin{align*} 
&F_{1,\beta}(\varphi'_{\beta})= \left(\int
_{-\infty}^{0-}+ \int _{0+}^{+\infty}\right)\varphi'_{\beta} \Big(-{\varphi}'''_{\beta}+\omega
\varphi'_{\beta}-p |\varphi_{\beta}|^{p-1}\varphi'_{\beta}\Big)dx
\\
&+\varphi'_{\beta}(0+)(\varphi''_{\beta}(0-)-\varphi''_{\beta}(0+))=\varphi'_{\beta}(0+)(\varphi''_{\beta}(0-)-\varphi''_{\beta}(0+))\\&=-\tfrac
{2}{\beta}\omega\left[\left(\tfrac{(p+1)\omega}{2}\right)\left(1-\tfrac {4}{\beta^2\omega}\right)\right]^{\tfrac
2{p-1}}\left(p-1-(p+1)\tfrac 4{\beta^2\omega}\right).
\end{align*}

The last one expression is negative due to
$\omega>\tfrac4{\beta^2}\tfrac{p+1}{p-1}$. Since
$F_{1,\beta}(\varphi_{\beta})=\left(L_{1,\beta} \varphi_{\beta},
\varphi_{\beta}\right)<0$, and the functions
$\varphi_{\beta},\varphi'_{\beta}$ have different parity, we
obtain
$ 
F_{1,\beta}(s\varphi_{\beta}+
r\varphi_{\beta}')=|s|^2F_{1,\beta}(\varphi_{\beta})+|r|^2F_{1,\beta}(\varphi_{\beta}')<0.
$ 
Therefore, we have that $F_{1,\beta}$ is negative on
two-dimensional subspace $\mathcal M=\Span\{\varphi_{\beta},
\varphi'_{\beta}\}\subset H^1(\mathbb R\setminus\{0\})$. Thus,
minimax principle induces $n(L_{1,\beta})\geq 2$, and
consequently $n(L_{1,\beta})=2$.
 \end{proof}

In \cite[Proposition 6.5]{AdaNoj13a} it was shown that
$\partial_\omega||\varphi_{\beta}||^2$ is positive for any $p>1$
and $\omega \in\left(\frac{4}{\beta^2},
\tfrac4{\beta^2}\tfrac{p+1}{p-1}\right)$. Thus, due to
Proposition \ref{neg_delta'}, we conclude that $e^{i\omega
t}\varphi_{\beta}$ is orbitally stable in this case.

Below we briefly discuss how to demonstrate the orbital instability of
$e^{i\omega t}\varphi_{\beta}$ for $p>1$ and
$\omega>\tfrac4{\beta^2}\tfrac{p+1}{p-1}$ proved in \cite[Theorem
6.11]{AdaNoj13a}.
To do that we need the following key result.
\begin{proposition}\label{morse_odd} Let
$\omega>\frac{4}{\beta^2},\,\beta>0$, and operator
$\widetilde{L}_{1,\beta}$ be defined as
 $$
\widetilde{L}_{1,\beta}=-\frac{d^2}{dx^2}+\omega-p|\varphi_{\beta}|^{p-1},\quad\dom(\widetilde{L}_{1,\beta})=D_{\beta,
\delta'}\cap X_{\odd},
 $$ 
where $X_{\odd}$ is the set of odd functions in
$L^2(\mathbb{R})$.
 Then $n(\widetilde{L}_{1,\beta})=1$.  
\end{proposition}
 
\begin{proof}[\bf Proof.]
It is obvious that  
$n(\widetilde{L}_{1,\beta})\leq 1$ in 
$X_{\odd}$. Indeed, 
$n_\pm(L_{\min})$ $=1$ in $X_{\odd}$ for 
$L_{\min}$ defined by \eqref{L_min'}. 
Since $\varphi_{\beta}\in \dom(\widetilde{L}_{1,\beta})$   
and $(\widetilde{L}_{1,\beta}\varphi_{\beta},\varphi_{\beta})$
$<0$,
then we get $n(\widetilde{L}_{1,\beta})=1$.
  \end{proof}
  
  Well-posedness of the Cauchy problem in 
$H^1(\mathbb R\setminus\{0\})\cap X_{\odd}$ associated with
equation \eqref{NLS_delta'} was shown in \cite[Theorem
6.11]{AdaNoj13a}.
Thus, we induce orbital instability of $e^{i\omega t}\varphi_{\beta}$ for
$p>1$ and $\omega>\tfrac4{\beta^2}\tfrac{p+1}{p-1}$.
Indeed, when $\partial_\omega||\varphi_{\beta}||^2>0$,
instability follows from Proposition \ref{neg_delta'}-$(ii)$ and from the results by Ohta in \cite{oh}. In
the case $\partial_\omega||\varphi_{\beta}||^2<0$ we can conclude
by Proposition \ref{morse_odd} orbital instability of $e^{i\omega
t}\varphi_{\beta}$ in $H^1(\mathbb R\setminus\{0\})\cap X_{\odd}$
which naturally induces orbital instability in $H^1(\mathbb
R\setminus\{0\})$.

\subsection{ The NLS-$\delta$   equation on the line}

The existence of standing wave solutions $u(t,x)=e^{i\omega t}
\varphi$ to equation \eqref{NLS} requires that the profile
$\varphi\in D_{\gamma,\delta}$ satisfies the semi-linear elliptic
equation
\begin{equation}\label{ellip}
H_\gamma^\delta\varphi+\omega\varphi-|\varphi|^{p-1}\varphi=0.
\end{equation}
The authors in \cite{FukJea08} (see also \cite{ghw})
showed that \eqref{ellip} for $\omega>\tfrac{\gamma^2}{4}$ has a
unique positive even solution modulo rotation
\begin{equation}\label{ellip1}
\varphi_\gamma(x)= \Big [\frac{(p+1)\omega}{2}
\sech^2 \Big (\frac{(p-1)\sqrt{\omega}}{2}|x|+\tanh^{-1}\Big(\frac{\gamma}{2\sqrt{\omega}}\Big) \Big ) \Big ]^{\frac{1}{p-1}},\quad
x\in\mathbb R.
\end{equation}
For the sake of completeness, we recall the main results on the
stability of soliton solutions to \eqref{NLS}. For $\gamma = 0$, the
orbital stability has been extensively studied in
\cite{{BerCaz81}, Caz82, Caz89, Wei83}. Namely, $e^{i\omega
t}\varphi_{0}$ is stable in $H^1(\mathbb{R})$ for any $\omega >
0$ and $1 < p < 5$ (see \cite{Caz82}), and unstable in
$H^1(\mathbb{R})$ for any $\omega > 0$ and $p \geq 5$ (see
\cite{BerCaz81} for $p > 5$ and \cite{Wei83} for $p = 5$).

The case $\gamma > 0$ was studied in \cite{FukOht08}. In
particular, the authors showed that the standing wave $e^{i\omega
t}\varphi_{\gamma}$ is stable in $H^1(\mathbb{R})$ for any
$\omega>\tfrac{\gamma^2}{4}$ and $1 < p \leq 5$, and if $p > 5$,
there exists a critical $\omega^*$ such that $e^{i\omega
t}\varphi_{\gamma}$ is stable in $H^1(\mathbb{R})$ for any
$\omega\in (\tfrac{\gamma^2}{4},\omega^* )$ and unstable
in $H^1(\mathbb{R})$ for any $\omega> \omega^*$.
In the case $\gamma<0$, the standing wave $e^{i\omega
t}\varphi_{\gamma}$ is unstable "almost for sure" in
$H^1(\mathbb{R})$ for any $p>1$ (see \cite{CozFuk08, FukJea08,
oh}).

Linearization of the NLS-$\delta$ equation on the line gives the
following two self-adjoint linear operators
\begin{equation*}\label{linear}
L_{1,\gamma}=-\frac{d^2}{dx^2}+\omega-p\varphi^{p-1}_{\gamma},
\quad
L_{2,\gamma}=-\frac{d^2}{dx^2}+\omega-\varphi^{p-1}_{\gamma},
\end{equation*}
with  $\dom(L_{j,\gamma})=D_{\gamma, \delta},\, j\in\{1,2\}$.
The operators $L_{1,\gamma}$ and $L_{2,\gamma}$ are associated
with the key action functional
\begin{equation*}
S_\gamma(\psi)=\tfrac 1{2}||\psi'||^2+\tfrac{\omega}{2}||\psi||^2
-\tfrac 1{p+1}||\psi||_{p+1}^{p+1}-\tfrac{\gamma}{2}|\psi(0)|^2,\,\,\,\psi\in H^1(\mathbb R),
\end{equation*}
by $
(S_\gamma)''(\varphi_{\gamma})(u,v)=(L_{1,\gamma}u_1,
v_1)+(L_{2,\gamma}u_2, v_2),$
where $u=u_1+iu_2$ and $v=v_1+iv_2$.  

The initial value problem associated to the NLS-$\delta$ equation
is locally well-posed in $H^1(\mathbb R)$ (see \cite[Theorem
4.6.1]{Caz89}) for any $p>1$. Making use of the explicit form
\eqref{ellip1} for $\varphi_{\gamma}$, the sign of
$\partial_\omega||\varphi_{\gamma}||^2$
was computed in \cite{FukJea08, FukOht08}. By variational
methods, it was shown in \cite{FukJea08} that $n(
L_{1,\gamma})=1$ in $H_{\rad}^1(\mathbb{R})$, for arbitrary
$\gamma$. Moreover, by using analytic perturbation theory and
continuation argument, it was shown in \cite{CozFuk08} that
$n(L_{1,\gamma})=1$ in $H^1(\mathbb{R})$ for any $\gamma>0$, as
well as $n(L_{1,\gamma})=2$ for $\gamma<0$.

Below we establish two novel proofs of the equality
$n(L_{1,\gamma})=1$ in $H^1(\mathbb{R})$ for any $\gamma>0$.
The first one is based on a generalization of the classical Sturm
oscillation theorem to the case of the $\delta$-interaction (see
\cite{Arm05, BerShu91} and Lemma \ref{Oscill} below). The second
one uses the extension theory. Note also that the equality
$\ker(L_{2,\gamma})=\Span\{\varphi_{\gamma}\}$ and Lemma
\ref{Oscill} imply $n(L_{2,\gamma})=0$.

\begin{lemma}\label{Oscill}
Let $V(x)$ be real-valued continuous function on $\mathbb{R}$ 
such that $\lim\limits_{|x|\rightarrow \infty}V(x)=c$.
Let also $\varphi_1, \varphi_2\in L^2(\mathbb{R})$ be
eigenfunctions of the operator
\begin{equation*}
L_V=-\frac{d^2}{dx^2}+V(x),\quad
\dom(L_V)=D_{\gamma, \delta},
\end{equation*}
corresponding to the eigenvalues $\lambda_1<\lambda_2<c$
respectively. Suppose that $n_1$ and $n_2$ are the number of
zeroes of $\varphi_1, \varphi_2$ respectively. Then $n_2>n_1$.
\end{lemma}

\begin{proposition}\label{num} Let $\gamma>0$ and
$\omega>\frac{\gamma^2}{4}$. Then $n(L_{1,\gamma})=1$.
\end{proposition}

\noindent\textbf{The first proof of Proposition \ref{num}. }
Initially we obtain from \eqref{ellip} that
$(L_{1,\gamma}\varphi_{\gamma},\varphi_{\gamma})<0$, and
therefore $n(L_{1,\gamma})\geq 1$. To evaluate $n(L_{1,\gamma})$
precisely consider the following self-adjoint operator
\begin{equation*}\label{L1'}
\widetilde{L}_{1,\gamma}=-\frac{d^2}{dx^2}+\omega-p\varphi^{p-1}_{0},\quad\quad
\dom(\widetilde{L}_{1,\gamma})=D_{\gamma, \delta},
\end{equation*}
where $\varphi_{0}= [\frac{(p+1)\omega}{2}
\sech^2 (\frac{(p-1)\sqrt{\omega}}{2}x ) ]^{\frac{1}{p-1}}$
is the classical soliton solution for the NLS equation.
It is easily seen that $\varphi'_{0}\in
\ker(\widetilde{L}_{1,\gamma})$. From Lemma \ref{Oscill} and the
fact that $x=0$ is the only zero of $\varphi'_{0}$ we have
$n(\widetilde{L}_{1,\gamma})\leq 1$.
Since $\varphi_{0}(x)>\varphi_{\gamma}(x)$ for all $x\in\mathbb
R$ and $\gamma>0$, we get the following inequality
\begin{equation*}\label{ineq}
(L_{1,\gamma}v,v)\geq(\widetilde{L}_{1,\gamma}v,v),\quad
\text{for all}\;\;v\in D_{\gamma, \delta}.
\end{equation*} 
Therefore,  we get $
1\leq n(L_{1,\gamma})\leq n(\widetilde{L}_{1,\gamma})\leq 1$.
Thereby, in the case $\gamma>0$ we get $n(L_{1,\gamma})=1$.
\qed

\smallskip

\noindent 
\textbf{The second proof of Proposition \ref{num}.}  
Recall that $L_{1,\gamma}$ is the self-adjoint 
extension of the following symmetric operator
\begin{equation*} 
 L_{0}=-\frac{d^2}{dx^2}+\omega-p\varphi^{p-1}_{\gamma},\quad
\dom(L_{0})=\left\{v\in H^2(\mathbb{R}):\, v(0)=0\right\}.
\end{equation*}
Moreover, it is known (see \cite[Chapter I.3]{AlbGes05}) that   
$$
\dom(L_0^*)=H^1(\mathbb{R})\cap
H^2(\mathbb{R}\setminus\{0\})=\dom(L_0)\oplus\Span\{e^{i\sqrt{i}|x|}\}\oplus\Span\{e^{i\sqrt{-i}|x|}\},
$$
with $\Im(\sqrt{\pm i})>0$. Indeed, since $\varphi_{\gamma}\in
L^{\infty}(\mathbb{R})$, we have $\dom(L_0^*)=\dom(L^*)$, where
$ L=-\frac{d^2}{dx^2},
$
$
 \dom(L)=\dom(L_0). $
In particular, $n_\pm(L_0)=n_\pm(L)=1.$
Next, it is easy to verify that for $\gamma>0$ and $v\in
H^2(\mathbb{R}\setminus \{0\})$ the following identity holds
\begin{equation}\label{identity1}
-v''+\omega v-p\varphi_{\gamma}^{p-1}v=
\frac{-1}{\varphi'_{\gamma}}\frac{d}{dx} \Big [(\varphi'_{\gamma})^2\frac{d}{dx} \Big (\frac{v}{\varphi'_{\gamma}} \Big ) \Big ],\quad\quad
x\neq 0.
\end{equation}
 Then,  using \eqref{identity1} and integrating by parts, we get
\begin{align}
\label{nonneg1} 
(L_{0} v,v)&=
\Big( \int
_{-\infty}^{0-}+ \int
^{\infty}_{0+}\Big) (\varphi'_{\gamma})^2 \Big [\frac{d}{dx} \Big (\frac{v}{\varphi'_{\gamma}} \Big ) \Big ]^2dx+
 \Big [v'v-v^2\frac{\varphi''_{\gamma}}{\varphi'_{\gamma}} \Big ]_{0-}^{0+}.
\notag
\end{align}

 Due to
the condition $v(0)=0$, non-integral term vanishes, and we get
$L_{0}\geq 0$ on $\dom(L_0)$.
Then, using Proposition~\ref{semibounded} we get
$n(L^\gamma_{1,\omega})\leq 1$. This finishes the proof due to
the inequality
$(L_{1,\gamma}\varphi_{\gamma},\varphi_{\gamma})<0$.
\qed

\smallskip

\noindent
{\bf Acknowledgements.} 
The authors are grateful to the  anonymous referee for  
valuable suggestions and  constructive comments 
which helped to improve considerably  the  
manuscript. J. Angulo Pava was supported 
in part by   CNPq/Brazil Grant and by 
FAPESP/Brazil (S\~ao Paulo Research Fundation) 
under the project 2016/07311-0. N. Goloshchapova 
was supported by FAPESP/Brazil under the 
projects 2012/50503-6 and 2016/02060-9.


\begin{thebibliography} {99}

\bibitem{AdaNoj15}
R. Adami, C. Cacciapuoti, D. Finco, and D. Noja,  
{\it Stable standing waves for a NLS on star graphs as local 
minimizers of the constrained energy,}
 J. Differential Equations, 260 (2016),  7397--7415.
 
\bibitem{AdaNoj14}
R. Adami, C. Cacciapuoti, D. Finco, and D. Noja, 
{\it Variational
properties and orbital stability of standing waves for NLS
equation on a star graph,}
 J. Differential Equations, 257 (2014),
3738--3777.

\bibitem{AdaNoj13a}
R. Adami  and  D. Noja, 
{\it Stability and symmetry-breaking
bifurcation for the ground states of a NLS with a $\delta'$
interaction,}
 Comm. Math. Phys., 318 (2013), 247--289.

\bibitem{AdaNoj13}
R. Adami, D. Noja, and  N. Visciglia, 
{\it Constrained energy
minimization and ground states for NLS with point defects,}
Discrete Contin. Dyn. Syst. Ser. B., 18 (2013), 1155--1188.


\bibitem{AlbGes05}
S. Albeverio, F. Gesztesy, R. Hoegh-Krohn,  and H. Holden,
``Solvable models in quantum mechanics,"
 2nd edition, AMS Chelsea
Publishing, Providence, RI, 2005.

\bibitem {ak}
S. Albeverio  and  P. Kurasov, 
``Singular Perturbations of Differential Operators,"
 London Mathematical
Society Lecture Note Series, 271, Cambridge Univ. Press,
Cambridge, 2000.


\bibitem{Arm05} 
W.O. Amrein, A.M. Hinz,  and D.P. Pearson, 
``Sturm-Liouville Theory,"
 Birkh\"auser Verlag, Basel, 2005.


\bibitem{Ang} 
J. Angulo Pava, 
{\it Instability of cnoidal-peak for the
NLS-$\delta$-equation,}
 Math. Nachr., 285 (2012), 1572--1602.

\bibitem{A3} 
J. Angulo Pava  and  N. Goloshchapova, 
{\it Stability of
standing waves for NLS-log equation with $\delta$-interaction,}
Nonlinear Differential Equations Appl., 24 (2017), Art. 27, 23
pp.

\bibitem{AngGol17a} 
J. Angulo and  N. Goloshchapova, 
{\it On the orbital
instability of excited states for the NLS equation with the
$\delta$-interaction on a star graph,}
 arXiv:1711.08377v2 (accepted in DCDS-A)

\bibitem{AngGol18} 
J. Angulo and   N. Goloshchapova,  
{\it On the standing waves of the NLS-log equation with point
interaction on a star graph,}
 arXiv:1803.07194v1

\bibitem{AP} 
J. Angulo Pava  and  G. Ponce, 
{\it The non-linear
Schr\"odinger equation with a periodic $\delta$-interaction,}
Bull. Braz. Math. Soc. (N.S.), 44 (2013), 497--551.


\bibitem{AngLop08} 
J. Angulo, O. Lopes,  and A. Neves, 
{\it Instability of
travelling waves for weakly coupled KdV systems,}
 Nonlinear Anal.,
69 (2008), 1870--1887.

 \bibitem{AngNat16}
J. Angulo and F. Natali, 
{\it On the instability of periodic waves for
dispersive equations,}
 Differential Integral Equations, 29 (2016),
837--874.


\bibitem{Ard16} 
A. H. Ardila, 
{\it Logarithmic NLS equation on star
graphs: existence and stability of standing waves,}
arXiv:1611.03319v3

\bibitem{BanIgn14}
V. Banica and  L.I.Ignat, 
{\it Dispersion for the Schr\"odinger equation
on the line with multiple Dirac delta potentials and on delta
trees,}
 Anal. Partial Differ. Equ., 7 (2014), 903--927.


\bibitem{BK}
G. Berkolaiko  and  P. Kuchment,
``Introduction to Quantum Graphs,"
 Mathematical Surveys and Monographs, 
 186, Amer. Math.
Soc., Providence, RI, 2013.

\bibitem{BerCaz81}
H. Berestycki  and  T. Cazenave, 
{\it Instabilit\'e des \'etats
stationnaires dans les \'equations de Schr\"odinger et de
Klein-Gordon non lin\'eaires,}
 C. R. Acad. Sci. Paris S\'er. I
Math., 293 (1981), 489--492.

\bibitem{BerShu91}
F. A. Berezin  and  M. A. Shubin, 
``The Schr\"odinger Equation,"
 translated from the 1983 Russian edition by Yu.\
Rajabov, D. A. Le\u\i tes and N. A. Sakharova and revised by
Shubin, Mathematics and its Applications (Soviet Series), 66,
Kluwer Acad. Publ., Dordrecht, 1991.

\bibitem{BlaExn08}
J. Blank, P. Exner,  and M. Havlicek, 
``Hilbert Space Operators in Quantum Physics,"
 2nd edition, Theoretical and Mathematical
Physics, Springer, New York, 2008.
 
\bibitem{BeK} 
V.A. Brazhnyi and  V.V. Konotop, 
{\it Theory of nonlinear
matter waves in optical lattices,}
 Mod. Phys. Lett. B, 18 (2004),
627--551.

\bibitem{BurCas01}
R. Burioni, D. Cassi, M. Rasetti, P. Sodano, and A. Vezzani,
{\it Bose-Einstein condensation on inhomogeneous
complex networks,}
 J. Phys. B: At. Mol. Opt. Phys., 34 (2001),
4697--4710.


\bibitem{CM} 
X. D. Cao  and  B. A. Malomed, 
{\it Soliton-defect
collisions in the nonlinear Schr\"odinger equation,}
 Phys. Lett.
A, 206 (1995), 177--182.

\bibitem {CMR} 
V. Caudrelier, M. Mintchev, and  E. Ragoucy,
{\it Solving the quantum nonlinear Schr\"odinger equation with
$\delta$-type impurity,}
 J. Math. Phys., 46 (2005), 042703, 24 pp.

\bibitem{Caz82} 
T. Cazenave   and  P.-L. Lions, 
{\it Orbital stability
of standing waves for some nonlinear Schr\"odinger equations,}
Comm. Math. Phys., 85 (1982), 549--561.

\bibitem{Caz89} 
T. Cazenave, 
``An Introduction to Nonlinear
Schr\"odinger Equations,"
 Textos de M\'{e}todos Mathematicos.,
26, IM-UFRJ, Rio de Janeiro, 1989.
 
\bibitem{Caz03} 
T. Cazenave, 
``Semilinear Schr\"odinger Equations,"
 Courant Lecture Notes in Mathematics, 10, New York
University, Courant Institute of Mathematical Sciences, New York,
2003.
 

\bibitem{CozFuk08} 
S. Le Coz, R. Fukuizumi, G. Fibich, B. Ksherim,  and Y. Sivan, 
{\it Instability of bound states of a nonlinear
Schr\"odinger equation with a Dirac potential,} Phys. D, 237
(2008), 1103--1128.

\bibitem{Fid15} 
F. Fidaleo, 
{\it Harmonic analysis on inhomogeneous
amenable networks and the Bose-Einstein condensation,}
 J. Stat. Phys., 160 (2015), 715--759.

\bibitem{FukJea08}
R. Fukuizumi  and  L. Jeanjean, 
{\it Stability of
standing waves for a nonlinear Schr\"odinger equation with a
repulsive Dirac delta potential,}
 Discrete Contin. Dyn. Syst., 21 (2008), 121--136.

\bibitem{FukOht08}
R. Fukuizumi, M. Ohta, and  T. Ozawa, 
{\it Nonlinear
Schr\"odinger equation with a point defect,}
 Ann. Inst. H.
Poincar\'e Anal. Non Lin\'eaire, 25 (2008), 837--845.

\bibitem{ghw}
R. H. Goodman, P. J. Holmes, and  M. I. Weinstein,
{\it Strong NLS soliton-defect interactions,}
 Phys. D, 192 (2004),
215--248.

\bibitem{GrilSha87} 
M. Grillakis, J. Shatah, and  W. Strauss,
{\it Stability theory of solitary waves in the presence of symmetry,}
I, J. Funct. Anal., 74 (1987), 160--197.

\bibitem{GrilSha90}
M. Grillakis, J. Shatah, and  W. Strauss, 
{\it Stability theory of
solitary waves in the presence of symmetry,  II,}
 J. Funct. Anal.,
94 (1990), 308--348.


\bibitem{HenPer82}
D. B. Henry, J. F. Perez, and  W. F. Wreszinski, 
{\it Stability theory
for solitary-wave solutions of scalar field equations,}
 Comm. Math. Phys., 85 (1982), 351--361.

\bibitem{HMZ1} 
J. Holmer, J. Marzuola, and  M. Zworski, 
{\it Fast soliton scattering by delta impurities,}
 Comm. Math. Phys., 274 (2007), 187--216.

\bibitem{KaiPel17a}
A. Kairzhan and  D. Pelinovsky, 
{\it Spectral stability of shifted states on star graphs,}
J. Phys. A, 51 (2018), 095203, 23 pp. 



\bibitem{KaiPel17b}
A. Kairzhan and  D. Pelinovsky, 
{\it Nonlinear Instability of
Half-Solitons on Star Graphs,}
J. Differential Equations, 264 (2018),  7357--7383.


\bibitem{kato} 
T. Kato, 
``Perturbation Theory for Linear Operators,"
Die Grundlehren der mathematischen Wissenschaften,
Band 132, Springer-Verlag New York, Inc., New York, 1966.

\bibitem{K}
P. Kuchment, 
{\it Quantum graphs, I. Some basic structures,}
Waves Random Media, 14 (2004), 107--128.


\bibitem{LinPon09} 
F. Linares,
G. Ponce, 
``Introduction to Nonlinear Dispersive Equations,"
2nd edition, Universitext, Springer, New York, 2009.

\bibitem{Mug15} 
D. Mugnolo (editor), 
``Mathematical Technology of Networks,"
 Bielefeld, December
2013, Springer Proceedings in Mathematics $\&$ Statistics 128,
2015.


\bibitem{Nai67} 
M.A. Naimark, 
``Linear Differential Operators,"
(Russian), 2nd edition, revised and augmented., Izdat.
``Nauka'', Moscow, 1969.

 \bibitem{Noj14}
D. Noja,
{\it  Nonlinear Schr\"odinger equation on graphs: recent results and open problems}, 
Philos. Trans. R. Soc. Lond. Ser. A Math. Phys. Eng. Sci., 372 (2014),
 20130002, 20 pp.
 
\bibitem{oh} 
M. Ohta, 
{\it Instability of bound states for abstract
nonlinear Schr\"odinger equations,}
 J. Funct. Anal., 261 (2011),
90--110.
 
\bibitem{RS} 
M. Reed  and  B. Simon, 
``Methods of Modern
Mathematical Physics, IV, Analysis of operators,"
 Academic Press,
New York, 1978.
 
\bibitem{Wei83}
M. I. Weinstein, 
{\it Nonlinear Schr\"odinger equations and sharp
interpolation estimates,}
 Comm. Math. Phys., 87 (1982/83),
567--576.


\end{thebibliography}
\end{document}